\documentclass[a4paper,reqno,11pt]{amsart}
\usepackage{mathtools}
\usepackage{dsfont}
\usepackage{amsmath, amsfonts, amssymb, amsthm, amscd}
\usepackage{graphicx}
\usepackage{psfrag}
\usepackage{perpage}
\usepackage{url}
\usepackage{color}
\usepackage{mathrsfs}
\usepackage{tikz}
\usepackage{mathabx}
\usetikzlibrary{arrows,decorations.pathmorphing,backgrounds,positioning,fit,petri} 
\usepackage{tikz,tikz-cd}\usepackage{caption,subcaption}
\usepackage[normalem]{ulem}

\let\oldsout\sout
\renewcommand{\sout}[1]{%
  \begingroup
  \renewcommand{\ULthickness}{1.2pt}%
  \textcolor{red}{\oldsout{\textcolor{black}{#1}}}%
  \endgroup
}

\usepackage{dsfont} 
\usepackage{esint}

\usepackage[utf8]{inputenc}
\usepackage[T1]{fontenc}
\usepackage{microtype}

\usepackage[a4paper,scale={0.72,0.74},marginratio={1:1},footskip=7mm,headsep=10mm]{geometry}

\usepackage{hyperref}

\usepackage{amsmath}

\newcommand{\avgint}{%
	\mathop{%
		\vcenter{\hbox{%
				$\int$\kern-0.7em\raisebox{0.6ex}{\rule{0.8em}{0.1pt}}%
		}}%
	}%
}

\makeatletter
\def\@secnumfont{\bfseries\scshape}

\def\section{\@startsection{section}{1}%
  \z@{.7\linespacing\@plus\linespacing}{.5\linespacing}%
  {\normalfont\large\bfseries\scshape\centering}}

\def\subsection{\@startsection{subsection}{2}%
  \z@{.5\linespacing\@plus.7\linespacing}{-.5em}%
  {\normalfont\bfseries\scshape}}

\def\subsubsection{\@startsection{subsubsection}{3}%
  \z@{.5\linespacing\@plus.7\linespacing}{-.5em}%
  {\normalfont\scshape}}

\def\specialsection{\@startsection{section}{1}%
  \z@{\linespacing\@plus\linespacing}{.5\linespacing}%
  {\normalfont\centering\large\bfseries\scshape}}
\makeatother

\makeatletter

\renewenvironment{proof}[1][\proofname]{\par
\pushQED{\qed}%
\normalfont \topsep4\p@\@plus4\p@\relax
\trivlist
\item[\hskip\labelsep
\bfseries
#1\@addpunct{.}]\ignorespaces
}{%
\popQED\endtrivlist\@endpefalse
}
\makeatother

\makeatletter
\newcommand \Dotfill {\leavevmode \leaders \hb@xt@ 6pt{\hss .\hss }\hfill \kern \z@}
\makeatother

\makeatletter
\def\@tocline#1#2#3#4#5#6#7{\relax
  \ifnum #1>\c@tocdepth 
  \else
    \par \addpenalty\@secpenalty\addvspace{#2}%
    \begingroup \hyphenpenalty\@M
    \@ifempty{#4}{%
      \@tempdima\csname r@tocindent\number#1\endcsname\relax
    }{%
      \@tempdima#4\relax
    }%
    \parindent\z@ \leftskip#3\relax \advance\leftskip\@tempdima\relax
    \rightskip\@pnumwidth plus4em \parfillskip-\@pnumwidth
    #5\leavevmode\hskip-\@tempdima
      \ifcase #1
       \or\or \hskip 1.65em \or \hskip 3.3em \else \hskip 4.95em \fi%
      #6\nobreak\relax
    \Dotfill
    \hbox to\@pnumwidth{\@tocpagenum{#7}}\par
    \nobreak
    \endgroup
  \fi}
\makeatother

\makeatletter
\def\l@section{\@tocline{1}{0pt}{1pc}{}{\scshape}}
\renewcommand{\tocsection}[3]{%
\indentlabel{\@ifnotempty{#2}{\ignorespaces#1 #2.\hskip 0.7em}}#3}
\def\l@subsection{\@tocline{2}{0pt}{1pc}{5pc}{}}

\def\l@subsubsection{\@tocline{3}{0pt}{1pc}{7pc}{}}

\makeatother

\numberwithin{equation}{section}

\newtheoremstyle{mytheorem}{.7\linespacing\@plus.3\linespacing}{.7\linespacing\@plus.3\linespacing}%
     {\itshape}
     {}
     {\bfseries}
     {. }
     {0.3ex}
     {\thmname{{\bfseries #1}}\thmnumber{ {\bfseries #2}}\thmnote{ (#3)}}  

\theoremstyle{mytheorem}

\newtheorem{theorem}{Theorem}[section]
\newtheorem{lemma}[theorem]{Lemma}
\newtheorem{proposition}[theorem]{Proposition}
\newtheorem{corollary}[theorem]{Corollary}
\newtheorem{remark}[theorem]{Remark}
\newtheorem{definition}[theorem]{Definition}

\newcommand{\bbE}{{\ensuremath{\mathbb E}} }

\newcommand{\bbN}{{\ensuremath{\mathbb N}} }

\newcommand{\bbR}{{\ensuremath{\mathbb R}} }

\newcommand{\cI}{{\ensuremath{\mathcal I}} }

\newcommand{\cZ}{{\ensuremath{\mathcal Z}} }

\newcommand{\gb}{\beta}

\newcommand{\gt}{\vartheta}

\newcommand{\gl}{\lambda}

\newcommand{\gO}{\Omega}

\DeclareMathSymbol{\leqslant}{\mathalpha}{AMSa}{"36} 
\DeclareMathSymbol{\geqslant}{\mathalpha}{AMSa}{"3E} 
\DeclareMathSymbol{\eset}{\mathalpha}{AMSb}{"3F}     

\newcommand{\be}{\begin{equation}}
\newcommand{\ee}{\end{equation}}

\newcommand{\R}{\mathbb{R}}

\newcommand{\PEfont}{\mathrm}
\DeclareMathOperator{\var}{\ensuremath{\PEfont Var}}
\DeclareMathOperator{\cov}{\ensuremath{\PEfont Cov}}

\newcommand{\e}{\ensuremath{\PEfont E}}

\newcommand{\E}{\e}

\newcommand{\ind}{\mathds{1}}

\newcommand{\eps}{\varepsilon}
\renewcommand{\epsilon}{\varepsilon}
\renewcommand{\theta}{\vartheta}
\renewcommand{\rho}{\varrho}

\newenvironment{myenumerate}{%
\renewcommand{\theenumi}{\arabic{enumi}}%
\renewcommand{\labelenumi}{{\rm(\theenumi)}}%
\begin{list}{\labelenumi}
	{%
	\setlength{\itemsep}{0.4em}%
	\setlength{\topsep}{0.5em}%
	\setlength\leftmargin{2.45em}%
	\setlength\labelwidth{2.05em}%
	\setlength{\labelsep}{0.4em}%
	\usecounter{enumi}%
	}%
	}%
{\end{list}
}

{\end{list}
}

{\end{list}
}

\renewenvironment{enumerate}{
\begin{myenumerate}}%
{\end{myenumerate}}

\newenvironment{myitemize}{%
\begin{list}{$\bullet$}%
 	{%
	\setlength{\itemsep}{0.4em}%
	\setlength{\topsep}{0.5em}%
	\setlength\leftmargin{2.65em}%
	\setlength\labelwidth{2.65em}%
	\setlength{\labelsep}{0.4em}%
	}%
	}%
{\end{list}}

{\end{myitemize}}

\MakePerPage[2]{footnote} 

\date{\today}

\newcommand\dd{\mathrm{d}}

\newcommand\sZ{\mathscr Z}

\newcommand\td{\text{d} }

\usepackage{mathtools}

\title[Moments comparison inequality for SHF, DPRE and GMCs] {Moments Comparison Inequalities for Critical 2d Stochastic Heat Flow, Polymers and GMC}

\begin{document}

\author{Ziyang Liu, Zuodi Xie}
\email{ziyang.liu.1@warwick.ac.uk, zuodi-xie.xie@warwick.ac.uk}
\begin{abstract}
We employ a Gaussian convex inequality developed  in \cite{AC_2015_convex_positive_lambda},  \cite{Chen2026_Ehrhard_convex1}, \cite{Guerra_2022_convex_positive_lambda} and \cite{F2025_Gaussian_convex_inequality} to establish  (one sided) moment comparison inequalities for Gaussian multiplicative chaos, directed polymers in Gaussian environment and, in particular, the Critical 2d Stochastic Heat Flow (SHF), including upper bounds on negative moments, as well as similar lower bounds for $p$-th moments with $p\in (0,1)$.  For the Critical 2d SHF, averaged over small balls, we also obtain the matching (up to multiplicative constants)  complementary bounds.  
We establish the complementary bounds using
a  bootstrap argument inspired by \cite{DS2010_liouville_and_KPZ} and a geometric decomposition introduced in \cite{GuTsai_2026_loglogCLT}. Specifically, we prove that for locally averaged SHF, for every $p\in\mathbb R$, the $p$-th moment is, up to multiplicative constants, bounded in both sides by the $\frac{p(p-1)}{2}$-th power of its second moment.

\end{abstract}

\keywords{Gaussian convexity inequality, critical stochastic heat flow, Ehrhard-type random variables, Gaussian multiplicative chaos, directed polymers, local moments}

\subjclass[2020]{Primary 60E15, 60H15; Secondary 60G57, 60K37, 82D60}

\maketitle

\tableofcontents

\section{Introduction}

\subsection{Background and motivations}
\label{sec:critical-SHF}

The Critical 2D Stochastic Heat Flow (SHF) was constructed in \cite{CSZ2023_2dcSHF} as a natural candidate for the solution of the 2D multiplicative stochastic heat equation at criticality, and also as a non-trivial, non-Gaussian limit of the rescaled partition function of the critical two-dimensional Directed Polymer in Random Environment (DPRE). We refer to \cite{CSZ2024_reviewSHF,CSZ2025_SHFicm} for reviews of SHF and to \cite{Zygouras2024_DPREreview,Comets2017_DPREbook} for reviews of DPRE.

Integer moments of the Critical 2D SHF play an important role in the study of its probabilistic properties. Some moment-convergence results for the mollified stochastic heat equations and DPRE can be found in \cite{BC1998_secondmomentSHE,CSZ2019_Dickman,CSZ2019_thirdmoment,GQT_2021_moments, CoscoZeitouni2023_momentsI,CoscoZeitouni2024_momentsII,CoscoNakajima2025_highmomentsDPRE}. Integer-moment estimates also reveal important features such as the intermittency of SHF at both macroscopic and microscopic scales. Nevertheless, a fuller understanding of SHF requires information beyond its positive integer moments. Recent developments on fractional moments of SHF can be found in \cite{BergerTurchiZygouras2026_fractionalSHF,Huang2026_fractionalSHF}.

Recent integer-moment estimates reveal a strong intermittency phenomenon for SHF at the macroscopic scale. In \cite{GangulyNam2025_SHFmoments_lowerbound}, it is proved that the $h$-th moment of SHF tested on a macroscopic ball grows doubly exponentially as $h\to\infty$. Since the first moment of SHF is finite, this doubly exponential growth shows that high moments are dominated by rare events carrying exceptionally large masses. In other words, positive moments and upper-tail estimates, which describe sparse high peaks, capture the macroscopic intermittency of SHF. By contrast, the probability of SHF taking value close to zero, and in particular its negative moments, is much less understood; see Open Problem~5 in \cite{CSZ2024_reviewSHF}. Recently, \cite{Nakashima_disjoint_tube} established a left-tail estimate for the logarithm of SHF, but this estimate does not imply the finiteness of any negative moment of SHF.

In the subcritical regime, \cite{CSZ2020_subcriticalKPZ} established lower-tail estimates for the two-dimensional DPRE and used them to control negative moments of its partition function. These estimates also play an important role in the construction of the two-dimensional KPZ equation in the subcritical regime \cite{CSZ2020_subcriticalKPZ}. The argument there relies on a localized Gaussian concentration inequality together with control of the Lipschitz constant of the free energy. At criticality, however, this Lipschitz constant grows too rapidly for the same method to be applied directly. By contrast, the Gaussian convexity property of \cite{Chen2026_Ehrhard_convex1} yields a Gaussian left-tail bound for convex functionals of Gaussian vectors in terms of the variance of the random variable, rather than the Lipschitz constant of the convex function. This provides the crucial mechanism for our analysis of negative moments and lower tails of the two-dimensional critical DPRE model and its limiting SHF. A more detailed discussion on this is given in Subsection~\ref{subsec: discussion on the proofs}.

Another intermittency phenomenon of SHF appears in its microscopic behavior. It was proved in \cite{LZ2024_micromomentSHF} that the locally averaged SHF has positive integer moments reminiscent of those of log-normal random variables. Although SHF itself is not a Gaussian multiplicative chaos, see \cite{CSZ2023_SHFisnotGMC}, this local-moment behavior indicates that its microscopic fluctuations exhibit a log-normal structure. This picture is further supported by \cite{GuTsai_2026_loglogCLT}, which proves a central limit theorem for the renormalized logarithm of the locally averaged SHF. However, a CLT for the locally averaged SHF cannot by itself yield sharp local-moment estimates. Moreover, the result of \cite{LZ2024_micromomentSHF} applies only to positive integer moments of SHF. The method there, which relies on a collision-diagram expansion derived from the integer-moment formula for SHF, cannot be directly extended to fractional or negative moments. It is also unclear from \cite{LZ2024_micromomentSHF} whether the $o(1)$ term in the exponent of the upper bound is intrinsic or merely an artifact of the proof.

Recently, \cite{BergerTurchiZygouras2026_fractionalSHF} proved an upper bound on the $p$-th fractional moments of SHF, uniformly in $p\in(0,1)$, the time parameter, the disorder parameter and the averaging radius within square root of the time parameter. In \cite{Huang2026_fractionalSHF}, a local fractional-moment estimate was also announced, although its logarithmic exponent still contains an additional $o(1)$ term.

Beyond their intrinsic interest, sharp local-moment estimates are also important inputs for the study of the logarithmic multifractal structure of SHF. Roughly speaking, multifractal analysis of an irregular random measure aims to quantify the size of its high-value points; see \cite{Bertacco_2023_MultifractalGMC} and the references therein. For SHF, we are interested in the size of level sets consisting of points at which the local average grows at a prescribed logarithmic rate. Closely related to this question, the recent work \cite{GN2026_coveringnumberSHF} studies the covering number by small balls of a carrier of SHF. Our sharp moment estimate for the locally averaged SHF provides another way to prove the result therein (see Remark \ref{remark: on GN covering number}). We expect that sharp local-moment estimates will also be a fundamental ingredient in the multifractal analysis, which we plan to explore in future work.

Motivated by the significance of moment estimates for SHF discussed above, this paper makes two principal contributions. First, we establish a moment comparison inequality for SHF. Besides solving two open questions, including the finiteness of all negative moments and estimates for the lower bound of $p\in (0,1)$ fractional moments, this inequality provides a general mechanism for deriving new moment estimates from a limited collection of known bounds. Second, we determine sharp two-sided asymptotics for the moments of locally averaged SHF of every real order, thereby extending the results of \cite{LZ2024_micromomentSHF}. 

Although the main purpose of the present paper is to establish moment estimates for the critical SHF, a broader goal is to introduce a general framework for moment estimates in Gaussian models and their scaling limits. Within the setting of SHF and DPRE, we exploit the Gaussian convexity property developed in \cite{AC_2015_convex_positive_lambda},  \cite{Chen2026_Ehrhard_convex1}, \cite{Guerra_2022_convex_positive_lambda} and \cite{F2025_Gaussian_convex_inequality} as a mechanism for deriving previously inaccessible moment estimates from known moment bounds. We also apply the same argument to subcritical Gaussian multiplicative chaos; see \cite{RV_2014_GMCreview} for a review, and to mollified stochastic heat equations (mSHE). Although the Gaussian convexity property has already been used in the study of spin glasses, for example \cite{Chen2026_Ehrhard_convex1}, to the best of our knowledge it has not yet been systematically applied to directed polymers, GMC, or mSHE. We expect that the method and framework developed here will also apply to other random models admitting suitable convex Gaussian approximations.

\subsection{Main results}
\label{subsection: main result}
{
The main results of this work contain two parts:
\begin{enumerate}
	\item Applications of the Gaussian convexity property in \cite{AC_2015_convex_positive_lambda},  \cite{Chen2026_Ehrhard_convex1}, \cite{Guerra_2022_convex_positive_lambda} and \cite{F2025_Gaussian_convex_inequality} to GMC, DPRE, and SHF.  
	\item A sharp two-sided estimate for the $p$-th moments of locally averaged SHF for every $p\in \mathbb R$. 
\end{enumerate} 
 }

\subsubsection{Applications of Gaussian convexity property}
We first introduce the Gaussian convexity property, which provides a common framework for the moment estimates developed in this paper for Gaussian DPRE, SHF, and GMC. We exploit a common structure of these models: their logarithms can be represented as \emph{convex functionals of Gaussian vectors}, or as weak limits of such functionals. By Ehrhard's inequality, see \cite{Ehrhard1983,Borell2003_Ehrhard}, if $X=F(G)$, where $F$ is convex and $G$ is a finite-dimensional Gaussian vector, then
\begin{equation}\label{eqIntro: concavity}
	r\mapsto \Phi^{-1} \bigl(\mathbb P(X<r)\bigr)
	\quad\text{is concave},
\end{equation}
where $\Phi(a):=\mathbb P(\mathcal N(0,1)<a)$ denotes the cumulative distribution function of a standard Gaussian random variable. Motivated by Ehrhard's inequality, we call random variables satisfying this concavity property \emph{Ehrhard-type}; a precise definition is given in Definition~\ref{def: Ehrhard-type random variables}.
In particular, the logarithms of DPRE partition functions are Ehrhard-type because they are convex functionals of Gaussian vectors. In Lemma~\ref{lemma: Ehrhard type is close}, we show that the Ehrhard-type property is closed under weak convergence. As a result, the logarithms of subcritical GMC and SHF masses are also Ehrhard-type.  

In \cite{AC_2015_convex_positive_lambda}, \cite{Chen2026_Ehrhard_convex1} and \cite{Guerra_2022_convex_positive_lambda}, it is proved that the normalized log-moment generating function; see \eqref{eq: normalized log MGF}, of a convex functional of some Gaussian vector is convex. Then, it is found in \cite{F2025_Gaussian_convex_inequality} that such convexity can be extended for Ehrhard-type random variables which are essentially still convex-Gaussian under the meaning of distribution (see Lemma \ref{lemma: Ehrhard and convex Gaussian}). As a direct corollary, we obtain the following moment comparison inequalities:
\begin{theorem}\label{thm: intro, Ehrhard moment comparison ineq}
	Let $X$ be an Ehrhard-type real-valued random variable in the sense of Definition~\ref{def: Ehrhard-type random variables}, and set $M:=e^X$. Assume that $\mathbb E[M]=1$. Then
	\begin{equation}\label{eqMC: intro, moment comparison ineq}
		\frac{\log \mathbb E[M^p]}{p(p-1)}
		\le
		\frac{\log \mathbb E[M^q]}{q(q-1)},
		\qquad
		\forall\, p<q,\quad p,q\notin\{0,1\},
	\end{equation}
	and
	\begin{equation}\label{eqMC: intro, E X two-side}
		-\frac{\log \mathbb E[M^q]}{q(q-1)}
		\le
		\mathbb E[X]
		\le
		-\frac{\log \mathbb E[M^p]}{p(p-1)},
		\qquad
		\forall\, p<0,\quad q>0.
	\end{equation}
	In particular,
	\begin{enumerate}
		\item if $p\in(-\infty,0)\cup(1,2)$, then
		$
		\mathbb E[M^p]
		\le
		\left(\mathbb E[M^2]\right)^{p(p-1)/2};
		$
		\item if $p\in(0,1)$, then
		$
		\mathbb E[M^p]
		\ge
		\left(\mathbb E[M^2]\right)^{p(p-1)/2};
		$
		\item if $q>2$, then
		$
		\mathbb E[M^q]
		\ge
		\left(\mathbb E[M^2]\right)^{q(q-1)/2}.
		$
	\end{enumerate}
\end{theorem}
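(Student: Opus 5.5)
The plan is to reduce everything to the convexity of the normalized log-moment generating function. For an Ehrhard-type $X$, set $\Lambda(t):=\log\mathbb E[e^{tX}]=\log\mathbb E[M^t]$ and, following \eqref{eq: normalized log MGF}, consider
\[
\psi(t):=\frac{\Lambda(t)-t\,\mathbb E[X]}{t^2}, \qquad t\neq 0,
\]
with $\psi(0):=\tfrac12\var(X)$. The results of \cite{AC_2015_convex_positive_lambda,Chen2026_Ehrhard_convex1,Guerra_2022_convex_positive_lambda}, extended to Ehrhard-type variables in \cite{F2025_Gaussian_convex_inequality} via Lemma~\ref{lemma: Ehrhard and convex Gaussian}, say that $\psi$ is convex on $\mathbb R$. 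A convenient equivalent form: for each $t\ne0$, $\Lambda(t)/t^2 - \mathbb E[X]/t$ is the secant slope of $s\mapsto$ ... . Rather than chase that, I would work directly with what convexity of $\psi$ gives.

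\textbf{Step 1 (normalization).} Since $\mathbb E[M]=1$, we have $\Lambda(1)=0$, so $\psi(1)=-\mathbb E[X]$. Also $\Lambda$ is convex, so $\mathbb E[X]=\Lambda'(0)\le\Lambda(1)-\Lambda(0)=0$ by Jensen.

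\textbf{Step 2 (the key identity).} The quantity $\frac{\Lambda(t)}{t(t-1)}$ should be expressed through $\psi$. Writing $\Lambda(t)=t^2\psi(t)+t\,\mathbb E[X]=t^2\psi(t)-t\psi(1)$ gives
\[
\frac{\Lambda(t)}{t(t-1)}=\frac{t\psi(t)-\psi(1)}{t-1}=\psi(t)+\frac{\psi(t)-\psi(1)}{t-1}.
\]
This is not obviously monotone, so I would rewrite it as $\frac{g(t)-g(1)}{t-1}$ with $g(t):=t\psi(t)$; indeed $g(1)=\psi(1)$. Thus $\frac{\Lambda(t)}{t(t-1)}$ is the secant slope of $g$ between $1$ and $t$, and \eqref{eqMC: intro, moment comparison ineq} amounts to monotonicity of this slope in $t$. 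That holds if $g$ is convex. The key point is that $t\psi(t)=\Lambda(t)/t-\mathbb E[X]$, and the convexity of $\Lambda(t)/t$ on each of $(0,\infty)$ and $(-\infty,0)$, with the correct matching across $0$, is essentially the content of the cited convexity (the statement in \cite{Chen2026_Ehrhard_convex1} is often phrased as convexity of $t\mapsto \frac{1}{t}\log\mathbb E e^{t(X-\mathbb E X)}$). I would therefore invoke the convexity result in that form, which gives convexity of $g$ on all of $\mathbb R$, with $g(0)=0$ by continuity. Monotone secant slopes through the point $1$ then give \eqref{eqMC: intro, moment comparison ineq} for all $p<q$ with $p,q\notin\{0,1\}$.

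\textbf{Step 3 (bounds on $\mathbb E X$).} The slope from $1$ to $0$ is $(g(0)-g(1))/(0-1)=g(1)=-\mathbb E[X]$. Monotonicity of slopes with $p<0<q$ places this value between the slopes to $p$ and to $q$, which yields \eqref{eqMC: intro, E X two-side}.

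\textbf{Step 4 (corollaries).} Take $q=2$, or $p=2$, in \eqref{eqMC: intro, moment comparison ineq}, and multiply by $p(p-1)$, keeping track of its sign. It is negative for $p\in(0,1)$, which reverses the inequality, and positive otherwise. This gives (1)--(3).

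The main obstacle is Step 2: identifying the precise convex function supplied by \cite{F2025_Gaussian_convex_inequality} and checking that it is convex across $t=0$ and $t=1$, not merely on the half-lines. I would also need to handle the case of infinite moments, where the inequalities hold trivially with the convention $+\infty$ on the appropriate side.
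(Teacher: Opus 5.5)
Your proof is correct and follows the paper's proof. Your $g(t)=t\psi(t)=\Lambda_X(t)-\mathbb E[X]$ is the normalized log-MGF of Theorem~\ref{thm: Ehrhard-type convexity} shifted by a constant, and the paper likewise combines $\Lambda_X(1)=0$ with the monotonicity of $r\mapsto \Lambda_X(r)/(r-1)$; that theorem also supplies the convexity across $0$ and $1$ and the $+\infty$ convention you flagged as the obstacle. One correction: your opening claim that $\psi$ itself is convex is not what the cited results give, since they only make $\psi$ (the secant slope of $\Lambda_X$ from $0$) nondecreasing, but you never use that claim.
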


As a first application, we consider subcritical Gaussian multiplicative chaos which provides a rigorous construction of the exponential of a log-correlated Gaussian field. We refer to \cite{RV_2014_GMCreview} for a review. In Subsection~\ref{subsection: GMC}, we show that the logarithm of the mass of a subcritical GMC is Ehrhard-type and hence obtain bounds for its negative moments from Theorem~\ref{thm: intro, Ehrhard moment comparison ineq}. We postpone further details to Subsection~\ref{subsection: GMC}.
Another closely related model considered here is the solution of the 2D mollified stochastic heat equation. In Subsection \ref{subsection: mSHE}, we prove that the logarithm of the solution is also Ehrhard-type and hence satisfies the moment comparison inequality in Theorem~\ref{thm: intro, Ehrhard moment comparison ineq}.

In what follows, we introduce the applications of Theorem~\ref{thm: intro, Ehrhard moment comparison ineq} in DPRE and SHF. We start with the definitions of partition functions of DPRE models. Let $S=(S_n)_{n\ge0}$ be a simple symmetric random walk on $\mathbb Z^2$, with
$
q_n(z):=\mathrm P(S_n=z\mid S_0=0),
$
and let
$
\bigl\{\omega(n,z): n\in\mathbb Z,\ z\in\mathbb Z^2\bigr\}
$
be a family of i.i.d. standard Gaussian random variables, independent of $S$. For integers $m<n$ and $x,y\in\mathbb Z^2$, define the point-to-point partition function by
\begin{equation}\label{eqDPRE: Z_{m,n}^{beta,omega}}
	\mathcal Z_{m,n}^{\beta,\omega}(x,y)
	:=
	\mathrm E\!\left[
	\exp\!\left\{
	\sum_{k=m+1}^{n-1}
	\left(
	\beta\omega(k,S_k)-\frac{\beta^2}{2}
	\right)
	\right\}
	\mathbf 1_{\{S_n=y\}}
	\,\middle|\, S_m=x
	\right],
\end{equation}
where the expectation is taken with respect to the random walk $S$. In Lemma~\ref{lemma: convexity of Free energy}, we show that for any two functions $f_1,f_2:\mathbb Z^2\to\mathbb R_+$, with at least one of them compactly supported, the logarithm of the partition function
\[
\log \mathcal Z_{m,n}^{\beta,\omega}(f_1,f_2)
:=
\log\!\left(
\sum_{x,y\in\mathbb Z^2}
\mathcal Z_{m,n}^{\beta,\omega}(x,y)f_1(x)f_2(y)
\right)
\]
is a convex functional of a finite-dimensional Gaussian vector and is therefore Ehrhard-type. As a direct application of Theorem~\ref{thm: intro, Ehrhard moment comparison ineq}, we obtain the following.

\begin{theorem}\label{thm: DPRE, moment comparison}
	The partition function $\mathcal Z_{m,n}^{\beta,\omega}(f_1,f_2)$ satisfies the Ehrhard-type inequalities \eqref{eqMC: intro, moment comparison ineq}--\eqref{eqMC: intro, E X two-side} with
	$
	M:=\mathcal Z_{m,n}^{\beta,\omega}(f_1,f_2),\ 
	X:=\log \mathcal Z_{m,n}^{\beta,\omega}(f_1,f_2).
	$
\end{theorem}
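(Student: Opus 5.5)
The plan is to reduce Theorem~\ref{thm: DPRE, moment comparison} to Theorem~\ref{thm: intro, Ehrhard moment comparison ineq}. That requires two things: that $X=\log \mathcal Z_{m,n}^{\beta,\omega}(f_1,f_2)$ is Ehrhard-type in the sense of Definition~\ref{def: Ehrhard-type random variables}, and that $M=e^X$ has $\mathbb E[M]=1$ (or can be normalised to have it).

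\textbf{Step 1 (finite-dimensional structure).} Because one of $f_1,f_2$ is compactly supported and the walk has bounded increments, only finitely many space-time sites $(k,z)$ with $m<k<n$ are reachable from $\mathrm{supp} f_1$ or into $\mathrm{supp} f_2$. Call this finite set $\Lambda$. Then $\mathcal Z_{m,n}^{\beta,\omega}(f_1,f_2)$ depends only on the Gaussian vector $G=(\omega(k,z))_{(k,z)\in\Lambda}$.

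\textbf{Step 2 (convexity).} Expanding the expectation over paths gives
\[
\mathcal Z = \sum_{\gamma} c_\gamma \exp\Bigl\{\beta\sum_{k}\omega(k,\gamma_k)\Bigr\},
\]
where $\gamma$ runs over finitely many paths (finitely many once one endpoint set is compact). The weights are $c_\gamma\ge 0$, built from the transition probabilities, $e^{-\beta^2(n-m-1)/2}$ and $f_1(\gamma_m)f_2(\gamma_n)$. So $\log\mathcal Z$ is a log-sum-exp of affine functions of $G$, hence convex in $G$. This is exactly Lemma~\ref{lemma: convexity of Free energy}, which I will invoke. By Ehrhard's inequality \eqref{eqIntro: concavity}, $X=F(G)$ is then Ehrhard-type.

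If $\mathcal Z=0$ with positive probability (for instance $f_1\equiv 0$ on reachable sites), the statement is vacuous and I exclude that case. Otherwise every $c_\gamma>0$ term contributes, so $\mathcal Z>0$ almost surely.

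\textbf{Step 3 (normalisation).} Fubini and $\mathbb E[e^{\beta\omega-\beta^2/2}]=1$ give
\[
\mathbb E[\mathcal Z]=\sum_{x,y} q_{n-m}(y-x)f_1(x)f_2(y)=:c\in(0,\infty).
\]
This is finite because one of $f_1,f_2$ is compactly supported and $q$ sums to one; I implicitly assume the other function is bounded, or at least has finite sum against $q$.

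\textbf{Step 4 (main obstacle: handling $c\neq 1$).} The Ehrhard-type property is invariant under deterministic shifts: $X-\log c$ is still a convex function of $G$. So Theorem~\ref{thm: intro, Ehrhard moment comparison ineq} applies directly to $\tilde M=\mathcal Z/c$ and $\tilde X = X-\log c$.

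The delicate point is how the statement reads when $c\neq1$. Literally, \eqref{eqMC: intro, moment comparison ineq}--\eqref{eqMC: intro, E X two-side} presume $\mathbb E[M]=1$. So I interpret Theorem~\ref{thm: DPRE, moment comparison} as holding for the normalised pair, equivalently for $f_1,f_2$ normalised so that $c=1$.

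I will also check that finiteness of all moments $\mathbb E[M^p]$, $p\in\mathbb R$, holds here, so that no side condition is needed:
\begin{itemize}
\item For $p>0$, $\mathcal Z$ is a finite positive combination of exponentials of Gaussians.
\item For $p<0$, Jensen on the path measure bounds $\mathcal Z$ below by $c\exp\{\text{Gaussian}\}$.
\end{itemize}
With that, the theorem follows immediately.
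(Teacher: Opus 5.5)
Your proposal is correct and follows the paper's own argument: convexity of the free energy in the finitely many relevant Gaussian variables (Lemma~\ref{lemma: convexity of Free energy}), Ehrhard's inequality to get the Ehrhard-type property, and then Theorem~\ref{thm: Ehrhard moment comparison ineq}. Two small points go slightly beyond the paper: your shift $X-\log c$ is the correct normalisation when $\mathbb E[M]\neq 1$ (the paper's remark after Theorem~\ref{thm: Ehrhard moment comparison ineq} instead divides by $\log\mathbb E[e^X]$, which is a misprint), and the one case your Step~2 does not literally cover, $X$ deterministic (e.g.\ $n=m+1$, where $X$ fails condition (1) of the Ehrhard-type definition), is trivial because every term in the inequalities vanishes after normalisation.
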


\begin{remark}
	Theorem~1.5 of \cite{BergerNakajima2026_sharpfreeenergy} gives two-sided estimates for
	\[
	F_p(\beta)
	:=\lim_{N\to \infty }
	\frac{1}{N}\log\mathbb E\!\left[(Z_N^{\beta,\omega})^p\right],
	\qquad
	Z_N^{\beta,\omega}
	:=
	\sum_{z\in\mathbb Z^2}Z_{0,N}^{\beta,\omega}(0,z),
	\]
	for $p>0$, under suitable moment conditions on $\omega$. By Theorem 1.3 and Theorem 1.5 therein, there is $c$, $C>0$ such that, as $\beta\to 0$,
	\[
	\begin{aligned}
		p\in (0,1),\quad 
-c p\wedge (1-p)F_2(\beta) \le F_p(\beta ) \le -C p\wedge (1-p)F_2(\beta),\\
p>1,\quad 
C (p-1)F_2(\beta) \le F_p(\beta ) \le c (p-1)F_{\lceil p\rceil}(\beta).
	\end{aligned}
	\]
	By Theorem~\ref{thm: DPRE, moment comparison}, when $\omega$ is Gaussian we obtain a one-sided bound for $F_p(\beta)$: if $p\in(-\infty,0)\cup(1,2)$, then
	\[
	F_p(\beta)\le \frac{p(p-1)}{2}F_2(\beta),
	\]
	whereas if $p\in(0,1)\cup(2,\infty)$, then
	\[
	F_p(\beta)\ge \frac{p(p-1)}{2}F_2(\beta).
	\]
	One cannot, however, expect to obtain bounds in the opposite direction directly from the Gaussian convexity property. It would also be interesting to extend the present results to DPRE with non-Gaussian disorder.
\end{remark}

\begin{remark}
 Consider the point-to-plane partition function of the two-dimensional subcritical DPRE, i.e.
	$
	W_N:= \sum_{z\in \mathbb Z^2} Z_{0,N}^{\beta_N, \omega} (0,z),
	$
	with $\beta_N^2 = \hat \beta ^2 \frac{\pi}{\log N}$ for some $\hat \beta \in (0,1)$. \cite{CoscoZeitouni2024_momentsII} studies the lower bound of integer high moments:
	For an integer $q$ in the regime $O(\log \log N)\le q^2\le O(\log N)$, their theorem 1.1 gives 
	\begin{equation}\label{eq: remark, CZ, II, thm1}
\mathbb E [W_N^q] \ge e^{\lambda^2 (1-o_N(1) ) \frac{q(q-1)}{2}},\quad \lambda^2 = \log \frac{1}{1-\hat \beta ^2}. 
	\end{equation}
	By Theorem \ref{thm: DPRE, moment comparison} and Theorem \ref{thm: intro, Ehrhard moment comparison ineq}, we have 
	\begin{equation}\label{eq: remark on CZ II}
\mathbb E [W_N^q] \ge (\mathbb E [W_N^2] )^{\frac{q(q-1)}{2}}
	\end{equation}
	 for any real $q\ge 2$. From $\lim _N \mathbb E [W_N^2]=e ^{\lambda ^2}$ (refer to \cite{LygkonisZygouras2023_ErdosTaylor}), we can thus recover the lower bound in \eqref{eq: remark, CZ, II, thm1}. However, when $q=O(\log N)$, theorem 1.3 in \cite{CoscoZeitouni2024_momentsII} gives $\mathbb E [W_N^q] \ge e ^{c \frac{q(q-1)}{2} \frac{N}{\log N} }$ which is much larger than the lower bound given in \eqref{eq: remark on CZ II}. Essentially, the exploding term $\frac{N}{\log N}$ appearing in the exponent comes from the fact that the contribution generated by the intersection of $q$ independent random walks dominates the probability cost of such collision event when $q=O(\log N)$.
\end{remark}

We now turn to the critical two-dimensional DPRE model and its weak limit, the Critical 2D SHF. Fix the dimension $d=2$ and choose the disorder strength $\beta_N=\beta_N(\theta)$ in the critical window
\begin{align}\label{eqDPRE: beta_N, critical}
	\beta_N(\theta)^2
	=
	\frac{\pi}{\log N}
	\left(
	1+\frac{\theta+o(1)}{\log N}
	\right),
	\qquad \theta\in\mathbb R.
\end{align}
In contrast, the DPRE model with
\begin{equation}\label{eqDPRE: beta_N, subcritical}
	\beta_N^2
	:=
	\frac{\pi\widehat\beta^2}{\log N},
	\qquad \widehat\beta\in(0,1),
\end{equation}
is said to lie in the subcritical regime; see \cite{CSZ2024_reviewSHF} and the references therein.

For $s<t$, define the diffusively rescaled random endpoint measure by
\begin{equation}
	\mathcal Z_{N;s,t}^{\theta}(\td x,\td y)
	:=
	N\,
	\mathcal Z_{\lfloor Ns\rfloor,\lfloor Nt\rfloor}^{\beta_N,\omega}
	\bigl(
	\lfloor\sqrt N\,x\rfloor,
	\lfloor\sqrt N\,y\rfloor
	\bigr)\,\td x\,\td y.
	\label{eq:rescaled-partition-measure}
\end{equation}
For every finite collection
$
s_1<t_1,\ldots,s_k<t_k,
$
the random measures in \eqref{eq:rescaled-partition-measure} converge jointly, in the vague topology, to a unique-in-law, non-degenerate family of locally finite positive random measures \cite{CSZ2023_2dcSHF}:
\begin{equation}\nonumber
	\left(
	\mathcal Z_{N;s_i,t_i}^{\theta}
	\right)_{i=1}^{k}
	\Longrightarrow
	\left(
	\mathscr Z_{s_i,t_i}^{\theta}
	\right)_{i=1}^{k}
	\qquad\text{as }N\to\infty.
\end{equation}
The limiting family
$
\mathscr Z^{\theta}
:=
\bigl\{\mathscr Z_{s,t}^{\theta}:-\infty<s<t<\infty\bigr\}
$
is called the \emph{Critical Two-dimensional Stochastic Heat Flow}.

For any non-negative function $f \in L^1(\mathbb R^2 )$, we write
\[
\mathscr Z_t^\theta(f)
:=
\int_{\mathbb R^2}f(y)\,\mathscr Z_t^\theta(\mathbf 1,\td y)
=
\int_{\mathbb R^4}f(y)\,\mathscr Z_t^\theta(\td x,\td y)
\]
for its one-sided marginal. By the convergence result above, the marginal DPRE partition function
\begin{align}\label{eqDPRE: Z_N(f)}
	\mathcal Z_{tN}^{\beta_N(\theta)}(f)
	:=
	\frac{1}{N}
	\sum_{z\in\mathbb Z^2}
	f\!\left(\frac{z}{\sqrt N}\right)
	\mathrm E_z\!\left[
	\exp\!\left\{
	\sum_{n=1}^{\lfloor tN\rfloor}
	\left(
	\beta_N(\theta)\omega(n,S_n)
	-\frac{\beta_N(\theta)^2}{2}
	\right)
	\right\}
	\right]
\end{align}
converges in distribution to $\mathscr Z_t^\theta(f)$. We will use the following first and second moment properties of SHF:
\[
\mathbb E[\mathscr Z_t^\theta(f)]
=\|f\|_{L^1(\mathbb R^2)},
\qquad
\mathbb E[\mathscr Z_t^\theta(f)^2]<\infty,
\qquad
f\in L^1(\mathbb R^2).
\]

Our first main result on the SHF is the following moment comparison inequality for SHF which follows from Theorem \ref{thm: intro, Ehrhard moment comparison ineq}. 

\begin{theorem}\label{thm: SHF moment comparison ineq}
	Fix $t>0$ and $\theta\in\mathbb R$. Let $\mathscr Z_t^\theta(\td x)$ be the two-dimensional critical SHF. Let either $f\in C_c(\mathbb R^2)$ or $f=c\mathbf 1_A$ for some compact set $A\subset\mathbb R^2$ and $c>0$. Assume further that $f\ge0$ and $\|f\|_{L^1(\mathbb R^2)}=1$. Then, for $p\in(-\infty,0)\cup(1,2)$,
	\begin{equation}\label{eqSHF: p<0, 1<p<2 moments upper bounds}
		\mathbb E[\mathscr Z_t^\theta(f)^p]
		\le
		\left(\mathbb E[\mathscr Z_t^\theta(f)^2]\right)^{p(p-1)/2}
		<\infty.
	\end{equation}
	More generally,
	\begin{equation}\label{eqSHF: moment comparison inequality}
		\frac{\log\mathbb E[\mathscr Z_t^\theta(f)^p]}{p(p-1)}
		\le
		\frac{\log\mathbb E[\mathscr Z_t^\theta(f)^q]}{q(q-1)},
		\qquad
		\forall\,p<q,\quad p,q\notin\{0,1\},
	\end{equation}
	and
	\begin{equation}\label{eqSHF: E log Z two-side bound}
		-\frac{\log\mathbb E[\mathscr Z_t^\theta(f)^q]}{q(q-1)}
		\le
		\mathbb E[\log\mathscr Z_t^\theta(f)]
		\le
		-\frac{\log\mathbb E[\mathscr Z_t^\theta(f)^p]}{p(p-1)},
		\qquad
		\forall\,p<0,\quad q>0.
	\end{equation}
\end{theorem}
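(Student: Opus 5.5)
The plan is to reduce Theorem~\ref{thm: SHF moment comparison ineq} to Theorem~\ref{thm: intro, Ehrhard moment comparison ineq}. Set $X:=\log\mathscr Z_t^\theta(f)$ and $M:=\mathscr Z_t^\theta(f)$. Since $\mathbb E[M]=\|f\|_{L^1}=1$, everything follows once we know two things. First, $M>0$ almost surely, so that $X$ is a real-valued random variable. Second, $X$ is Ehrhard-type in the sense of Definition~\ref{def: Ehrhard-type random variables}.

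Given these, \eqref{eqSHF: moment comparison inequality} and \eqref{eqSHF: E log Z two-side bound} are exactly \eqref{eqMC: intro, moment comparison ineq}--\eqref{eqMC: intro, E X two-side}. The bound \eqref{eqSHF: p<0, 1<p<2 moments upper bounds} is item (1) of that theorem. Its finiteness follows from $\mathbb E[M^2]<\infty$, which is among the stated second-moment properties of the SHF.

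\textbf{Step 1: Ehrhard-type property via approximation.} I approximate $M$ by the DPRE marginal $M_N:=\mathcal Z_{tN}^{\beta_N(\theta)}(f)$ from \eqref{eqDPRE: Z_N(f)}. For each $N$, $M_N$ is a partition function $\mathcal Z_{0,\lfloor tN\rfloor}^{\beta_N,\omega}(f_1,f_2)$, where $f_1(z)=N^{-1}f(z/\sqrt N)$ is compactly supported and nonnegative and $f_2\equiv 1$. So Lemma~\ref{lemma: convexity of Free energy} applies: $\log M_N$ is a convex functional of finitely many Gaussians, hence Ehrhard-type, provided $M_N>0$. The case $f\equiv 0$ is excluded because $\|f\|_{L^1}=1$.

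By \cite{CSZ2023_2dcSHF}, $M_N\Rightarrow M$ for $f\in C_c$. For $f=c\mathbf 1_A$ with $A$ compact I still need convergence of $M_N$. I would get it either from the vague convergence together with the fact that $\mathbb E[\mathscr Z_t^\theta(\partial A)]=|\partial A|$, or, more safely, by sandwiching $\mathbf 1_A$ between continuous functions and using the $L^1$/second-moment continuity of $f\mapsto M(f)$. If $|\partial A|>0$, I would instead approximate $M(c\mathbf 1_A)$ in $L^1$ by $M(g_k)$ with $g_k\in C_c$, $g_k\to c\mathbf 1_A$ in $L^1$. This works because $\mathbb E|M(g)-M(h)|\le\|g-h\|_{L^1}$ for nonnegative $g,h$.

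In either case $\log M_N$, or $\log M(g_k)$, converges in distribution to $\log M$, as long as $M>0$ a.s. Lemma~\ref{lemma: Ehrhard type is close} (closedness under weak convergence) then gives that $X$ is Ehrhard-type. The normalisation $\mathbb E[M_N]=\|f\|_{L^1}+o(1)$, or $\|g_k\|_{L^1}\to 1$, is not needed at the approximate level, since the Ehrhard property concerns only the law of $X$.

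\textbf{Step 2: positivity, the main obstacle.} I must rule out an atom of $M$ at $0$, otherwise $\log M$ is not real-valued. My first attempt would use the Ehrhard structure itself, through a uniform left-tail bound. Concavity of $r\mapsto\Phi^{-1}(\mathbb P(\log M_N<r))$, together with uniform bounds $\mathbb P(M_N\ge 1/2)\ge c>0$ (Paley--Zygmund, using $\mathbb E[M_N]\to1$ and $\sup_N\mathbb E[M_N^2]<\infty$) and $\mathbb P(M_N<K)\ge 1-1/K$, gives two points on the concave curve. Concavity then forces the curve to lie below a line of uniform positive slope to the left. The result is a Gaussian tail $\mathbb P(\log M_N<-r)\le\Phi(a-br)$ with $a,b$ independent of $N$.

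Passing to the limit via the portmanteau theorem on open sets $\{M<e^{-r}\}$ gives $\mathbb P(M<e^{-r})\le\Phi(a-br)\to0$, hence $\mathbb P(M=0)=0$. This Gaussian tail also shows directly that every negative moment is finite.

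The delicate point is to make the slope argument rigorous. Concavity lets us extrapolate leftward only with a slope at least the secant slope between the two anchor points. So I need the anchors to be separated uniformly in $N$, which the bounds above provide. Alternatively, positivity can be cited from known results (e.g.\ \cite{Nakashima_disjoint_tube}).
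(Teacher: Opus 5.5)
Your proposal is correct, but it secures positivity by a different mechanism than the paper.

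The paper never treats positivity as a separate step. It applies Theorem~\ref{thm: Chen, thm 1 on convexity} to the prelimit $M_N$, normalising by hand, to get $\mathbb E[M_N^p]\le(\mathbb E M_N)^{p(2-p)}(\mathbb E M_N^2)^{p(p-1)/2}$ for $p\in(-\infty,0)\cup(1,2)$. It then passes to the limit using convergence of the first two moments and the truncation $\mathbb E[M^p\wedge K]=\lim_N\mathbb E[M_N^p\wedge K]$, followed by $K\uparrow\infty$. This yields \eqref{eqSHF: p<0, 1<p<2 moments upper bounds} directly, and finiteness of negative moments then forces $M>0$ a.s. Only afterwards are Lemma~\ref{lemma: Ehrhard type is close} and Theorem~\ref{thm: Ehrhard-type convexity} used for the remaining two claims.

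You instead derive a uniform Gaussian left tail for $\log M_N$ from Ehrhard concavity plus two anchors that are uniform in $N$. You push this through the portmanteau theorem and then read off all three claims from Theorem~\ref{thm: intro, Ehrhard moment comparison ineq} applied at the limit. Your extrapolation is sound; it is the left-tail mirror of the first step in the proof of Lemma~\ref{lemma: Ehrhard type is close}.

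The paper's route buys a quantitative bound at every $N$ (essentially Theorem~\ref{thm: DPRE, moment comparison}) and gets positivity for free. Yours stays entirely inside the Ehrhard framework and needs less input than you actually use. The anchors follow from weak convergence alone: tightness gives the upper one, and $\liminf_N\mathbb P(M_N>e^{s})\ge\mathbb P(M>e^{s})>0$ for $s$ negative enough gives the lower one. So your argument shows in general that a weak limit of exponentials of Ehrhard-type variables cannot charge $0$ unless it vanishes almost surely.

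Your $L^1$ approximation for $f=c\mathbf 1_A$, via $\mathbb E|M(g)-M(h)|\le\|g-h\|_{L^1}$, is a sensible safeguard where the paper simply cites convergence. The positivity step must then be run on $M(g_k)$ rather than on $M_N$, which works verbatim.

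Finally, you do not check that $\log M$ is non-constant, as Lemma~\ref{lemma: Ehrhard type is close} requires. This is harmless: since $\mathbb E[M]=1$, a constant $M$ would be $M\equiv 1$, and every claim would reduce to an equality.
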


We discuss here the results in Theorem~\ref{thm: SHF moment comparison ineq}:
\begin{enumerate}
	\item Taking $p<0$ in \eqref{eqSHF: p<0, 1<p<2 moments upper bounds}, we obtain the finiteness of every negative moment of SHF and therefore solve Open Problem~5 in \cite{CSZ2024_reviewSHF}. The existence of negative moments also implies that SHF is positive almost surely, so it is legitimate to write $\log\mathscr Z_t^\theta(f)$ in \eqref{eqSHF: E log Z two-side bound}. Indeed, using $|\log z|\le C(z+z^{-1})$ for $z>0$, we obtain
	\begin{equation}\label{eqSHF: E log Z <}
		\begin{aligned}
			\left|\mathbb E[\log\mathscr Z_t^\theta(f)]\right|
			&\le
			C\left(
			\mathbb E[\mathscr Z_t^\theta(f)]
			+\mathbb E[\mathscr Z_t^\theta(f)^{-1}]
			\right)\\
			&\le
			C\left(
			\mathbb E[\mathscr Z_t^\theta(f)]
			+\mathbb E[\mathscr Z_t^\theta(f)^2]
			\right)
			<\infty,
		\end{aligned}
	\end{equation}
	where the second inequality follows from \eqref{eqSHF: p<0, 1<p<2 moments upper bounds} with $p=-1$. Positivity of SHF has also been proved in \cite{Nakashima_disjoint_tube,ClarkTsai_2025_cGMC} by different methods. Here we reprove it directly from the existence of negative moments.
	
	\item Taking $p\in(0,1)$ and $q=2$ in \eqref{eqSHF: moment comparison inequality}, we obtain
	\[
	\mathbb E[\mathscr Z_t^\theta(f)^p]
	\ge
	\left(
	\mathbb E[\mathscr Z_t^\theta(f)^2]
	\right)^{p(p-1)/2},
	\qquad p\in(0,1),
	\]
	which proves the lower-bound part of Theorem~1.1 in \cite{BergerTurchiZygouras2026_fractionalSHF} for the fractional moments of SHF.
\end{enumerate}

Our second main result concerns the left tail of SHF. In \cite{Nakashima_disjoint_tube}, it is proved that for every $\delta<2/3$,
\begin{equation}\nonumber
	\mathbb P\!\left(
	\log\mathscr Z_t^\theta(B(0,1))<-r
	\right)
	\le e^{-cr^\delta}
\end{equation}
for all sufficiently large $r>0$. We improve this to a Gaussian left-tail bound, which follows essentially from the left-tail estimate of \cite{Chen2026_Ehrhard_convex1}; see Theorem~\ref{thm: Chen, theorem 3, left tail}.

\begin{theorem}[Gaussian left tail of SHF]\label{thm: left tail of SHF}
	For any $t>0$ and $\theta\in\mathbb R$, and for any non-negative $f\in C_c(\mathbb R^2)$ or $f=c\mathbf 1_A$ with $c>0$ and $A\subset\mathbb R^2$ compact, we have
	\begin{equation}\label{eqSHF: left tail}
		\mathbb P\!\left(
		\log\mathscr Z_t^\theta(f)
		-\mathbb E[\log\mathscr Z_t^\theta(f)]
		<-x\sqrt{\operatorname{Var}(\log\mathscr Z_t^\theta(f))}
		\right)
		\le
		e^{-x^2/2},
		\qquad x>0,
	\end{equation}
	where $\mathbb E[\log\mathscr Z_t^\theta(f)]$ can be bounded using \eqref{eqSHF: E log Z <}.
\end{theorem}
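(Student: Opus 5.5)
The plan is to show that $X:=\log\mathscr Z_t^\theta(f)$ is Ehrhard-type, and then to get the Gaussian left tail from the concavity of $r\mapsto\Phi^{-1}(\mathbb P(X<r))$ alone. This is how \cite{Chen2026_Ehrhard_convex1} obtains its left-tail estimate, which works with the distribution rather than with the convex function itself.

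\textbf{Step 1: $X$ is Ehrhard-type.}
By Lemma~\ref{lemma: convexity of Free energy}, $X_N:=\log\mathcal Z_{tN}^{\beta_N(\theta)}(f)$ is a convex functional of a finite-dimensional Gaussian vector. Here $f\ge0$ has compact support, so only finitely many starting points and finitely many $\omega(n,z)$ are involved. Ehrhard's inequality then shows that $X_N$ is Ehrhard-type.

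Next, $\mathcal Z_{tN}^{\beta_N(\theta)}(f)\Rightarrow\mathscr Z_t^\theta(f)$ in distribution. For $f\in C_c$ this is the vague convergence. For $f=c\mathbf 1_A$ one needs an additional argument: the SHF has no atoms with positive first moment on $\partial A$, or one can approximate $\mathbf 1_A$ by continuous functions from above and below, using $\mathbb E\mathscr Z_t^\theta(\partial A^\varepsilon)\to|\partial A|$. If $|\partial A|>0$, the continuity-set issue has to be handled via monotone approximation instead; this is the same setting already used for Theorem~\ref{thm: SHF moment comparison ineq}.

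Theorem~\ref{thm: SHF moment comparison ineq} also gives $\mathscr Z_t^\theta(f)>0$ a.s., so $X_N\Rightarrow X$ on $\mathbb R$. Lemma~\ref{lemma: Ehrhard type is close} then shows that $X$ is Ehrhard-type. Moreover, $\mathbb E|X|<\infty$ by \eqref{eqSHF: E log Z <}. Finiteness of $\operatorname{Var}(X)$ follows the same way, since $(\log z)^2\le C(z^2+z^{-2})$ and both moments are finite by \eqref{eqSHF: p<0, 1<p<2 moments upper bounds} and $\mathbb E[\mathscr Z_t^\theta(f)^2]<\infty$.

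\textbf{Step 2: representation as a convex function of a standard Gaussian.}
Let $\psi(r):=\Phi^{-1}(\mathbb P(X<r))$, which is concave and nondecreasing. Using Lemma~\ref{lemma: Ehrhard and convex Gaussian}, write $X\overset{d}{=}h(g)$, where $g\sim\mathcal N(0,1)$ and $h=\psi^{-1}$ is the generalized inverse. Then $h$ is convex and nondecreasing.

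\textbf{Step 3: the left-tail bound.}
This is the step I expect to be the main obstacle. Set $\mu=\mathbb E X$ and $\sigma^2=\operatorname{Var}X$, and use the supporting line of $h$ at $0$: $h(y)\ge h(0)+ay$ for some $a\ge0$.

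The hard part is to compare this line with $\mu$ and $\sigma$, that is, to show
\[
\{h(g)<\mu-x\sigma\}\subset\{g<-x\}.
\]
In other words, if $h(y)<\mu-x\sigma$ then $y<-x$. I would first try to invoke Theorem~\ref{thm: Chen, theorem 3, left tail} directly, since it is stated for convex functionals of Gaussians and should apply verbatim to $h(g)$ with $h$ convex on $\mathbb R$.

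If that theorem instead needs finite-dimensional Lipschitz convex functions, I would argue by approximation. Truncate $h$ to make it Lipschitz, apply the bound to $X_N$, which is a genuine finite-dimensional convex Gaussian functional, and pass to the limit. For the limit one needs the following:
\begin{itemize}
\item $\mathbb E X_N\to\mu$ and $\operatorname{Var}X_N\to\sigma^2$. This follows from uniform integrability, namely uniform bounds on $\mathbb E[\mathcal Z_N^{\pm 2}]$ obtained through Theorem~\ref{thm: DPRE, moment comparison} together with the convergence of second moments of the DPRE.
\item The Portmanteau theorem, applied to the open set $\{X<\mu-x\sigma\}$. This direction may be the wrong one, so I would use a slightly larger threshold and then let it decrease to $\mu-x\sigma$, using right-continuity.
\end{itemize}
This gives \eqref{eqSHF: left tail}.
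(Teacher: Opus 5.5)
Your proposal is essentially the paper's proof. $\log\mathscr Z_t^\theta(f)$ is Ehrhard-type by Lemma~\ref{lemma: Ehrhard type is close}, hence equal in law to $h(g)$ with $h$ finite and convex by Lemma~\ref{lemma: Ehrhard and convex Gaussian}, and Theorem~\ref{thm: Chen, theorem 3, left tail} then applies verbatim with $d=1$. Your Step~1 supplies the needed $\mathbb E[h(g)^2]<\infty$, so the approximation fallback is unnecessary.

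Do discard the inclusion $\{h(g)<\mu-x\sigma\}\subset\{g<-x\}$ that you call the hard part, because it is false in general. Jensen gives $h(0)\le\mu$, strictly unless $h$ is affine, so the inclusion fails for small $x>0$. For example, with $h=\exp$ it would force $\mathbb P(X<\mu)\le\tfrac12$, whereas this probability equals $\Phi(1/2)$. The bound $e^{-x^2/2}$ comes from Chen's convexity-of-the-normalized-MGF argument, not from a pointwise comparison with a supporting line.
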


\subsubsection{Moments estimate of locally averaged SHF}
Set
\begin{equation}\label{eqIntro: U epsilon}
	\mathcal U_\epsilon(x)
	:=
	\frac{1}{\pi\epsilon^2}\mathbf 1_{B(0,\epsilon)}(x),
	\qquad
	B(0,\epsilon)
	:=
	\{z\in\mathbb R^2:|z|\le\epsilon\},
\end{equation}
where $|\cdot|$ denotes the Euclidean norm on $\mathbb R^2$. It is proved in \cite{LZ2024_micromomentSHF} that for every integer $p\ge2$,
\begin{equation}\label{eqSHF: LZ local moments}
	\left(
	\mathbb E[\mathscr Z_t^\theta(\mathcal U_\epsilon)^2]
	\right)^{p(p-1)/2}
	\le
	\mathbb E[\mathscr Z_t^\theta(\mathcal U_\epsilon)^p]
	\le
	C_p
	\left(
	\mathbb E[\mathscr Z_t^\theta(\mathcal U_\epsilon)^2]
	\right)^{p(p-1)/2+1/\log^{(3)}(1/\epsilon)},
\end{equation}
with
$
\mathbb E[\mathscr Z_t^\theta(\mathcal U_\epsilon)^2]
\sim
\log\frac{1}{\epsilon};
$
see \eqref{eqSHF: local second moments}. We extend this local-moment estimate in two directions: we remove the $\log^{(3)}(1/\epsilon)$ term from the exponent in the upper bound in \eqref{eqSHF: LZ local moments}, and we extend the estimates to all real moments.

\begin{theorem}[Moments estimate of locally averaged SHF]\label{thm: SHF, local moments}
	Let $\mathcal U_\epsilon$ be defined by \eqref{eqIntro: U epsilon}. For every $p\in\mathbb R$, there exist constants
	$
	C_L=C_L(\theta,t,p)\ge1$, $
	C_U=C_U(\theta,t,p)\ge1,
	$
	and $\epsilon_0>0$ such that for every $\epsilon\in(0,\epsilon_0)$,
	\begin{equation}\label{eqSHF: local moments estimate}
		C_L
		\left(
		\mathbb E[\mathscr Z_t^\theta(\mathcal U_\epsilon)^2]
		\right)^{p(p-1)/2}
		\le
		\mathbb E[\mathscr Z_t^\theta(\mathcal U_\epsilon)^p]
		\le
		C_U
		\left(
		\mathbb E[\mathscr Z_t^\theta(\mathcal U_\epsilon)^2]
		\right)^{p(p-1)/2}.
	\end{equation}
	In particular, one may take $C_L=1$ for $p\in[2,\infty)\cup[0,1]$ and $C_U=1$ for $p\in(-\infty,0)\cup(1,2)$.
\end{theorem}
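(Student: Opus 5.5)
The one-sided halves of \eqref{eqSHF: local moments estimate} come directly from Theorem~\ref{thm: SHF moment comparison ineq} applied with $f=\mathcal U_\epsilon$, which is of the form $c\mathbf 1_A$ with $\|f\|_{L^1}=1$. This gives the upper bound with $C_U=1$ for $p\in(-\infty,0)\cup(1,2)$, and the lower bound with $C_L=1$ for $p\in(0,1)\cup(2,\infty)$. The endpoints $p\in\{0,1,2\}$ are trivial equalities. It therefore remains to prove the complementary bounds:
\begin{itemize}
\item[(i)] a lower bound for $p<0$ and $p\in(1,2)$;
\item[(ii)] an upper bound for $p\in(0,1)$ and $p>2$.
\end{itemize}
Write $m_\epsilon:=\mathbb E[\mathscr Z_t^\theta(\mathcal U_\epsilon)^2]\asymp\log\frac1\epsilon$. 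A useful reduction: by \eqref{eqSHF: moment comparison inequality}, $\phi(p):=\log\mathbb E[\mathscr Z^p]/(p(p-1))$ is nondecreasing in $p$. Hence an upper bound $\phi(q)\le \tfrac12\log m_\epsilon+C$ for some large $q$ gives upper bounds for every $p\in(2,q)$ and every $p\in(0,1)$ (where $p(p-1)<0$ turns the lower bound on $\phi$ into an upper bound on the moment). Likewise, a lower bound $\phi(p_0)\ge\tfrac12\log m_\epsilon-C$ at some very negative $p_0$ propagates to all $p\in(p_0,0)\cup(1,2)$. So it suffices to prove:
\begin{itemize}
\item[(a)] a sharp upper bound for all large integer $q$, i.e. removing the $1/\log^{(3)}(1/\epsilon)$ loss in \eqref{eqSHF: LZ local moments};
\item[(b)] a sharp lower bound for negative moments of arbitrarily negative order.
\end{itemize}

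For (a) I would use a multiscale bootstrap in the spirit of Duplantier--Sheffield. Using the Chapman--Kolmogorov structure of SHF and the geometric decomposition of Gu--Tsai, write $\mathscr Z_t(\mathcal U_\epsilon)\approx \mathscr Z_{t}(\mathcal U_{\delta})\cdot Y_{\epsilon,\delta}$, where $\delta=\epsilon^{a}$ for some fixed $a\in(0,1)$. Here $Y_{\epsilon,\delta}$ is the short-time (time $\asymp\delta^2$) field averaged at scale $\epsilon$, and it is essentially independent of the coarse factor up to controllable error terms. This yields a recursion of the form $\mathbb E[\mathscr Z_\epsilon^q]\le (1+\eta_\delta)\,\mathbb E[\mathscr Z_\delta^q]\,\mathbb E[Y^q]$. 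Scaling invariance of SHF (with shifted $\theta$) identifies $Y$ with a locally averaged SHF at ratio $\epsilon/\delta$. Since $\log m_\epsilon \approx \log\log\frac1\epsilon$ is additive over such scale splits only up to $O(1)$ corrections, I would instead iterate along scales $\epsilon_k=\exp(-e^{k})$. Then $m_{\epsilon_{k+1}}/m_{\epsilon_k}\to e$, and the product of the errors $\prod_k(1+\eta_k)$ converges, provided $\eta_k$ decays summably (polynomially in $k$). The initial step is given by the existing bound \eqref{eqSHF: LZ local moments}, and the loss term is then absorbed into a constant.

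For (b), I plan to lower bound $\mathbb E[\mathscr Z^{p}]$ for $p<0$ by restricting to the event that $\mathscr Z$ is small, using Jensen and the same multiscale factorization. In the product of nearly independent factors, each factor's negative moment is at least $m^{p(p-1)/2}$ up to constants. An alternative is to get a matching lower bound on $\operatorname{Var}\log\mathscr Z$ via the Gu--Tsai CLT (variance $\sim\log\log\frac1\epsilon$), combined with the Ehrhard MGF convexity.

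The main obstacle is the decorrelation/error control in the bootstrap: quantifying that the coarse and fine factors are asymptotically independent, with errors $\eta_k$ summable in the $q$-th moment. I would first try to bound the error terms by second/integer-moment estimates from the collision-diagram expansion, since integer moments are explicitly computable.
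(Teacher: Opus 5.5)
Your reduction matches the paper's. The one-sided halves come from Theorem~\ref{thm: SHF moment comparison ineq}, and everything else follows, via monotonicity of $\phi$, from sharp upper bounds on integer moments $h\ge 2$ and sharp lower bounds on negative moments. There is one slip. An upper bound on $\phi(q)$ only gives upper bounds on $\phi(p)$ for $p<q$, and for $p\in(0,1)$ these are \emph{lower} bounds on $\mathbb E[\mathscr Z^p]$. The upper bound for $p\in(0,1)$ therefore has to come from (b), as in \eqref{eqLocal: mid 3}.

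The genuine gap is that neither (a) nor (b) is actually established. For (a), the recursion $\mathbb E[\mathscr Z_\epsilon^q]\le(1+\eta)\,\mathbb E[\mathscr Z_\delta^q]\,\mathbb E[Y^q]$ with summable $\eta_k$ is exactly what is unknown. The interaction remainder (e.g.\ $(D_1\bullet Z_2\bullet\cdots)(p_0)$ in \eqref{eq:gt decomp1}) would have to be negligible against the product in $L^q$ for large $q$, where moments are dominated by rare large values. The available bounds, such as \eqref{eqL: E B_I^2<}, are only $L^2$. Bounding the remainder through the collision-diagram expansion just brings you back to estimating that series sharply.

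The paper does exactly that and needs no decorrelation at all. Starting from the LZ series \eqref{eq:intmom-LZ-input}, it inserts $e^{2\lambda}e^{-\lambda\sum_i(u_i+v_i)}$, integrates out the $v_i$, and rewrites the bound as $\int_{\epsilon^2}^2\sum_k(q_hK_\lambda)^k\mathbf 1(x)\,\frac{\mathrm dx}{x}$. Here $K_\lambda$ is the positive operator with kernel $F_\lambda(y+x/2)$. It then exhibits the explicit supersolution $H(x)=(\log\frac1x)^{q_h}-A(\log\frac1x)^{q_h-1}$ (constant for $x>\delta$), which satisfies $1+q_hK_\lambda H\le H$. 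Induction on partial sums gives $\sum_k(q_hK_\lambda)^k\mathbf 1\le H$, hence $\mathcal M^\theta_{h,\epsilon}\lesssim\int_{\epsilon^2}^2 H(x)\frac{\mathrm dx}{x}\lesssim(\log\frac1\epsilon)^{\binom h2}$.

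The key input is the kernel bound $F_\lambda(w)\le\frac{1}{w\log(1/w)}+\frac{C_F}{w(\log(1/w))^2}$, with leading constant exactly $1$. Because of this, $q\int_x^\delta(\log\frac1y)^{q-1}\frac{\mathrm dy}{y}=(\log\frac1x)^q-(\log\frac1\delta)^q$ reproduces the leading term, while $-A(\log\frac1x)^{q-1}$ absorbs all errors. So the Duplantier--Sheffield-style bootstrap here is a deterministic operator inequality, not a scale factorization of the field.

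For (b), ``each factor's negative moment is at least $m^{p(p-1)/2}$ up to constants'' is not enough. You have $\asymp\log\log\frac1\epsilon$ factors in your scheme (or $N\asymp\log\frac1\epsilon$ blocks in Gu--Tsai's), so per-factor constants compound into a spurious power of $\log\frac1\epsilon$. Per-block errors must be summable. The paper gets this from the sharp variance $\mathbb E V_i^2=\frac1{N-i}+O((N-i)^{-2})$ of Lemma~\ref{lemma: Block variance}, which yields $\mathbb E[A_i^p\mathbf 1_{\Omega_i}]\ge 1+\frac{p(p-1)}{2(N-i)}-C(N-i)^{-3/2}$.

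The remainder is handled not by restricting to $\{\mathscr Z\text{ small}\}$, but by three devices:
\begin{enumerate}
\item It restricts to $\Omega=\bigcap_i\{|V_i|<\frac12\}$, i.e.\ moderate block fluctuations.
\item It uses the tangent-line inequality $x^p\ge y^p+p(x-y)y^{p-1}$ at $y=\prod_iA_i$. This makes the error linear in $M_n-\prod_iA_i$, so Cauchy--Schwarz with $L^2$ bounds on $B_I$ suffices.
\item It drops the last $\ell$ blocks using the martingale property together with convexity of $x\mapsto x^p$ for $p<0$.
\end{enumerate}

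Your alternative route fails in direction and in scale. The tangent line of the convex $\Lambda$ at $0$, with $\Lambda'(0)=\frac12\operatorname{Var}X$, gives $\Lambda(p)\ge\mathbb E X+\frac p2\operatorname{Var}X$. For $p<0$ this means $\log\mathbb E e^{pX}\le p\,\mathbb E X+\frac{p^2}{2}\operatorname{Var}X$, which is an \emph{upper} bound. Jensen only gives $\log\mathbb E e^{pX}\ge p\,\mathbb E X$, which misses the $\frac{p^2}{2}\log\log\frac1\epsilon$ term. A CLT carries no information at the moderate-deviation scale $|p|\sqrt{\log\log(1/\epsilon)}$ that the sharp exponent requires.
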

 
\begin{remark}
	Compared with \cite{BergerTurchiZygouras2026_fractionalSHF}, Theorem~\ref{thm: SHF, local moments} gives $p$-moment estimates for both sides and all $p\in\mathbb R$ but only for SHF tested on the microscopic scale. In contrast, the upper bound obtained in \cite{BergerTurchiZygouras2026_fractionalSHF} is uniform in $p\in(0,1)$, $\theta\in\mathbb R$, $t>0$, and $r\le\sqrt t$. This uniformity makes it possible to study regimes in which the test region, for example
	$
	\{z\in\mathbb R^2:|z|\le r\},
	$
	expands as $t\to\infty$.
\end{remark}

\begin{remark}
	As a direct corollary of Theorem~\ref{thm: SHF, local moments} and the estimate \eqref{eqSHF: E log Z two-side bound}, we obtain the following two-sided bound for the mean of the logarithm of the locally averaged SHF:
	\begin{equation}\nonumber
		-C-\frac12\log\mathbb E[\mathscr Z_t^\theta(\mathcal U_\epsilon)^2]
		\le
		\mathbb E[\log\mathscr Z_t^\theta(\mathcal U_\epsilon)]
		\le
		C-\frac12\log\mathbb E[\mathscr Z_t^\theta(\mathcal U_\epsilon)^2]
	\end{equation}
	for some $C=C(t,\theta)$. By the second-moment estimate \eqref{eqSHF: local second moments}, this is equivalent to
	\[
	\left|
	\mathbb E[\log\mathscr Z_t^\theta(\mathcal U_\epsilon)]
	+\frac12\log\log\frac{1}{\epsilon}
	\right|
	\le C.
	\]
	In \cite{GuTsai_2026_loglogCLT}, for
	$
	g_\epsilon(x)
	=
	\frac{1}{2\pi\epsilon}e^{-|x|^2/(2\epsilon)},
	$
	it is proved that
	\[
	\frac{
		\log\mathscr Z_t^\theta(g_\epsilon)
		+\frac{1+o_\epsilon(1)}{2}\log\log\frac{1}{\epsilon}
	}{
		\sqrt{\log\log\frac{1}{\epsilon}}
	}
	\]
	converges in distribution to a standard Gaussian. Our estimate for the mean of the logarithm of the locally averaged SHF suggests that the $o_\epsilon(1)$ term appearing there may be an artifact of the proof.
\end{remark}

\begin{remark}\label{remark: on GN covering number}
In \cite{GN2026_coveringnumberSHF}, the core tool for studying the covering number problem is a size-biased concentration estimate for the
local SHF mass given in their Corollary 8.4: there exists $c(\delta)>0$ such that for all $\epsilon >0$ small enough,
\begin{equation}\label{eqSHF: locally tilted mass concentration}
\mathbb E \left[
\mathscr Z_t^\theta (\mathcal U_\epsilon ) \mathds{1}_{\mathscr Z_t^\theta (\mathcal U_\epsilon) >(\log \frac{1}{\epsilon}) ^{1/2 +\delta }} 
\right]+ 
\mathbb E \left[
\mathscr Z_t^\theta (\mathcal U_\epsilon) \mathds{1}_{\mathscr Z_t^\theta (\mathcal U_\epsilon) <(\log \frac{1}{\epsilon}) ^{1/2 -\delta }} 
\right] \le (\log \frac{1}{\epsilon})^{c(\delta)}
\end{equation}
Strictly speaking, the local average considered in
\cite{GN2026_coveringnumberSHF} is defined using a heat-kernel mollifier rather
than the ball average $\mathcal U_\epsilon$ used here. However, this difference is not essential to the covering problem studied therein. 
With the moment estimate \eqref{eqSHF: local moments estimate} especially for $p$ near $1$,
we can now deduce \eqref{eqSHF: locally tilted mass concentration} in a rather direct way: for any $p>0$, 
\[
\mathbb E \left[
\mathscr Z_t^\theta (\mathcal U_\epsilon) \mathds{1}_{\mathscr Z_t^\theta (\mathcal U_\epsilon) <(\log \frac{1}{\epsilon}) ^{1/2 -\delta }} 
\right] \le (\log \frac{1}{\epsilon})^{(1/2 -\delta) p} \mathbb E [ \mathscr{ Z}_t^\theta (\mathcal U_\epsilon)^{1-p}] 
\le C (\log \frac{1}{\epsilon})^{-\delta p +p^2 /2}.
\]
Take $p= \delta$, we can bound it by $(\log \frac{1}{\epsilon})^{-\delta ^2 /2}$. The same argument with the upper bound of $\mathbb E [ \mathscr{ Z}_t^\theta (\mathcal U_\epsilon)^{1+p}] $ gives 
\[
\mathbb E \left[
\mathscr Z_t^\theta (\mathcal U_\epsilon ) \mathds{1}_{\mathscr Z_t^\theta (\mathcal U_\epsilon) >(\log \frac{1}{\epsilon}) ^{1/2 +\delta }} 
\right] \le (\log \frac{1}{\epsilon})^{-\delta ^2 /2}.
\] 
Thus the sharp $p-$moment estimates in Theorem \ref{thm: SHF, local moments} with $p$ around $1$ imply directly that, biased by the local SHF mass, the normalized mass of an $\epsilon$-ball is concentrated at the scale
$
(\log \frac1\epsilon)^{1/2+o_\epsilon (1)}.
$
\end{remark}

\subsection{Main ideas of the proofs}
\label{subsec: discussion on the proofs}

\subsubsection{Gaussian convexity: from localized gradient control to variance estimates}

Gaussian concentration inequalities and their variants have commonly been used to study left tails and negative moments of DPRE models; see, for example, \cite{MFG_2014_1dDPREpositivity,CSZ2020_subcriticalKPZ}. In Section~3.3 of \cite{CSZ2020_subcriticalKPZ}, negative moments of partition functions for the two-dimensional DPRE in the subcritical regime are obtained from a lower-tail estimate for the free energy derived via a localized concentration inequality. The argument proceeds by showing that the gradient of the free energy is uniformly controlled on an event whose probability is bounded away from zero. The crucial input is the strict subcriticality of the disorder strength $\beta_N$ in \eqref{eqDPRE: beta_N, subcritical}. The gap from the critical threshold $\pi$ allows one to increase the effective coupling slightly while remaining in the $L^2$ regime, thereby obtaining the replica-overlap estimates needed for the localized gradient bound. In the critical window \eqref{eqDPRE: beta_N, critical}, however,
$
\beta_N^2\log N\longrightarrow\pi,
$
so this margin disappears, and the same overlap-based argument no longer provides uniform control of the gradient.

The Gaussian convexity property developed in \cite{AC_2015_convex_positive_lambda}, offers a different mechanism. For a convex functional $F(G)$ of a Gaussian vector $G$, classical Gaussian concentration produces bounds on the scale of the squared Lipschitz constant of $F$. By contrast, Theorem~1 and its corollary in \cite{Chen2026_Ehrhard_convex1} replace this Lipschitz scale by the variance of $F(G)$. This improvement is particularly useful when $F(G)$ exhibits superconcentration (refer to  \cite{Chatterjee2014_superconcentration}), that is, when $\var ( F(G))$ is much smaller than the square of the Lipschitz constant of $F$. 

There is also a difference in the logical order of the two approaches. In \cite{CSZ2020_subcriticalKPZ}, a lower-tail estimate is proved first and then integrated to obtain negative-moment bounds. In \cite{Chen2026_Ehrhard_convex1}, convexity of the normalized log-moment generating function first yields estimates for negative exponential moments, from which the lower-tail bound follows as a corollary. Motivated by the latter approach, we exploit the fact that the DPRE free energy is a convex function of the Gaussian disorder and subsequently transfer the resulting moment and lower-tail estimates to the critical SHF. We also mention that the study of left tail for convex-Gaussian random variables has appeared in for example \cite{GV_2018_convexGaussian_lefttail} and \cite{Valettas2019_convexGaussian_lefttail}. More detailed discussion can be seen in Subsection \ref{subsection: Ehrhard ineq, R.Vs}.

\subsubsection{Gaussian convexity property and Ehrhard-type moments comparison inequality}
We first introduce the Gaussian convexity property developed in \cite{AC_2015_convex_positive_lambda}, \cite{Guerra_2022_convex_positive_lambda} and \cite{Chen2026_Ehrhard_convex1}, and then specialize it to DPRE and SHF. As we will see, the moment comparison inequality provides a powerful tool for converting a limited amount of moment information into estimates for other moments.

For a random variable $X$ with finite exponential moments, define its normalized log-moment generating function by
\begin{equation}\label{eq: normalized log MGF}
	\Lambda_X(\lambda)
	:=
	\begin{cases}
		\displaystyle
		\frac{1}{\lambda}
		\log\mathbb E[e^{\lambda X}],
		& \lambda\ne0,\\[1.2ex]
		\mathbb E[X],
		& \lambda=0.
	\end{cases}
\end{equation}

Let $G\sim\mathcal N(0,I_d)$ and let $F:\mathbb R^d\to\mathbb R$ be convex. It is proved in \cite{Chen2026_Ehrhard_convex1} that, provided
\[
\mathbb E[e^{\lambda F(G)}]<\infty
\qquad\text{for every }\lambda>0,
\]
the map $\lambda\mapsto\Lambda_{F(G)}(\lambda)$ is convex on $\mathbb R$, which extends the same convexity result but only for positive $\lambda$ and Gaussian spin-glass setting in \cite{AC_2015_convex_positive_lambda} and \cite{Guerra_2022_convex_positive_lambda}.

For the convenience of discussion here, we assume
$
\mathbb E[e^{F(G)}]=1,
$
so that $\Lambda_{F(G)}(1)=0$. Convexity of $\Lambda_{F(G)}$ implies that
$
\lambda\mapsto 
\frac{\Lambda_{F(G)}(\lambda)}{\lambda-1}
$
is nondecreasing. Equivalently,
\begin{equation}\label{eqG: Gaussian moments comparison}
	\frac{\log\mathbb E[e^{pF(G)}]}{p(p-1)}
	\le
	\frac{\log\mathbb E[e^{qF(G)}]}{q(q-1)},
	\qquad
	\forall\,p<q,\quad p,q\notin\{0,1\},
\end{equation}
and
\begin{equation}\label{eqG: Gaussian moments comparison 2}
	-\frac{\log\mathbb E[e^{qF(G)}]}{q(q-1)}
	\le
	\mathbb E[F(G)]
	\le
	-\frac{\log\mathbb E[e^{pF(G)}]}{p(p-1)},
	\qquad
	\forall\,p<0,\quad q>0.
\end{equation}

We next apply this comparison to DPRE. Let $f\in C_c(\mathbb R^2)$ be non-negative with $\|f\|_{L^1(\mathbb R^2)}>0$. Recall that $\mathcal Z_{tN}^{\beta_N(\theta)}(f)$ is the two-dimensional DPRE partition function in the critical window tested against $f$, and that it converges in distribution to $\mathscr Z_t^\theta(f)$. In Lemma~\ref{lemma: convexity of Free energy}, we show that the free energy
$
F_N(\omega)
:=
\log\mathcal Z_{tN}^{\beta_N(\theta)}(f)
$
is a convex function of a finite-dimensional Gaussian vector and therefore falls within the framework above. Applying \eqref{eqG: Gaussian moments comparison} with $p<0$ bounds the $p$-th negative moment of
$
\frac{\mathcal Z_{tN}^{\beta_N(\theta)}(f)}
{\mathbb E[\mathcal Z_{tN}^{\beta_N(\theta)}(f)]}
$
by the appropriate power $p(p-1)/2$ of its second moment. Using the moment convergence result \eqref{eqSHF: moments limit from Z_N, h =1,2} together with a truncation argument, and then letting $N\to\infty$, yields the upper bound for every negative moment of $\mathscr Z_t^\theta(f)$. The same argument applies for $p\in(1,2)$. This proves \eqref{eqSHF: p<0, 1<p<2 moments upper bounds} in Theorem~\ref{thm: SHF moment comparison ineq}.

To make it more convenient for using the moment comparison property above for weak limits of convex-Gaussian random variables like SHF, we turn to the Ehrhard-type argument. The recent work \cite{F2025_Gaussian_convex_inequality} proves the same convexity of normalized log-moment generating functions for Ehrhard-type random variables, which are ''equivalent'' to convex functionals of Gaussian vectors; see Lemma~\ref{lemma: Ehrhard and convex Gaussian} for details. Inspired by \cite{F2025_Gaussian_convex_inequality}, we extend the moment comparison inequality to weak limits of convex functionals of Gaussian vectors by proving that the class of Ehrhard-type random variables is closed under weak convergence; see Lemma~\ref{lemma: Ehrhard type is close} and Theorem~\ref{thm: Ehrhard-type convexity}. We refer to the resulting inequality as the \emph{Ehrhard-type moment comparison inequality}. In particular, from the weak convergence of $\log\mathcal Z_{tN}^{\beta_N(\theta)}(f)$ to $\log\mathscr Z_t^\theta(f)$, we obtain
\begin{equation}\label{eqIntro: SHF moments comparison}
	\frac{\log\mathbb E[\mathscr Z_t^\theta(f)^p]}{p(p-1)}
	\le
	\frac{\log\mathbb E[\mathscr Z_t^\theta(f)^q]}{q(q-1)},
	\qquad
	\forall\,p<q,\quad p,q\notin\{0,1\}.
\end{equation}

We now illustrate how the moment comparison inequality yields estimates for all real moments in terms of the second moment. First, \eqref{eqIntro: SHF moments comparison} directly gives estimates in one direction; see Theorem~\ref{thm: intro, Ehrhard moment comparison ineq}. The remaining directions are obtained by choosing suitable comparison exponents in \eqref{eqIntro: SHF moments comparison}. For a non-integer $p>2$, comparison with an integer $h>p$ reduces an upper bound for $\mathbb E[\mathscr Z_t^\theta(f)^p]$ to an upper bound for the integer moment $\mathbb E[\mathscr Z_t^\theta(f)^h]$. Similarly, suitable comparisons reduce the upper bound for $p\in(0,1)$ and the lower bound for $p\in(1,2)$ to lower bounds for negative moments. Consequently, upper bounds for positive integer moments, together with suitable lower bounds for negative moments, propagate to sharp two-sided estimates for every real moment.

The moment comparison framework developed here can be adapted readily to other Gaussian models and their scaling limits, including Gaussian multiplicative chaos and mollified stochastic heat equations. Further details are given in Section~\ref{section: GMC and mSHE}.

\subsubsection{Moments of locally averaged SHF}
The moment comparison principle alone cannot give all the real moments estimate on both sides. To complete the picture, we still need the upper bounds for integer moments of order $h\ge2$ and lower bounds for negative local moments. Both estimates require additional information specific to SHF.

We first discuss the positive integer moments. 
For integer $h$, \cite{LZ2024_micromomentSHF} exploits the exact moment formula for SHF, expressed as a series of integrals indexed by collision diagrams, and proves the two-sided bounds in \eqref{eqSHF: LZ local moments}. The upper-bound proof in \cite{LZ2024_micromomentSHF} estimates the collision-diagram integrals term by term and then applies a combinatorial argument. This produces an additional factor
$
O\left(
|\log\epsilon|^{1/\log^{(3)}(\epsilon^{-1})}
\right).
$

Instead of estimating the collision series term by term, in Subsection~\ref{subsection: int local moments, upper bound} we control the whole series through a bootstrap argument. Specifically, we rewrite the series \eqref{eq: M_{h,epsilon}^theta <}, originally obtained in \cite{LZ2024_micromomentSHF}, in the equivalent form \eqref{eqU: represtt of upper bounds for M_epsilon}, whose main part is
$
\sum_{k=0}^\infty(qK_\lambda)^k\mathbf 1(x),
$
where $q>0$, $\lambda>0$, and $K_\lambda$ is the positive linear operator defined in \eqref{eq: K_lambda f}. Rather than estimating the summands $((qK_\lambda)^k\mathbf 1)(x)$ individually, define
$
S_N(x)
:=
\sum_{k=0}^N((qK_\lambda)^k\mathbf 1)(x).
$
Then
$
\mathbf 1+qK_\lambda S_N=S_{N+1}.
$
If we can construct some function $H$ satisfying
$
\mathbf 1+qK_\lambda H\le H
$
and the initial bound $S_0=\mathbf 1\le H$, then induction gives $S_N\le H$ for all $N$, and hence the same bound in the limit $N\to\infty$. At the same time, we choose $H$ carefully so that it satisfies the integral bound \eqref{eq: int H(x)/x}, which yields the sharp moment estimate. This strategy was partially inspired by the bootstrap argument used in the study of the left tail of Gaussian multiplicative chaos in \cite[Lemma~4.5]{DS2010_liouville_and_KPZ}, where a bootstrap inequality for the left tail is first obtained and then iterated against a suitable supersolution to derive the desired decay estimate.

We next turn to lower bounds for the negative local moments of SHF. Following the block decomposition of \cite{GuTsai_2026_loglogCLT}, we divide the time interval $[\epsilon^2,1]$ geometrically by setting
\[
t_i:=\epsilon^2b^{-2i},
\qquad
N:=\left\lfloor
\frac{\log\epsilon^{-1}}{\log b^{-1}}
\right\rfloor,
\]
where $b\in(0,1)$ is fixed and sufficiently small. By the flow property of SHF (refer to \cite{ClarkMian2026_SHFflow}), 
$
\left\{
\mathscr Z_{\epsilon^2,t}^{\theta}
(g_\epsilon,\mathbf 1)
\right\}_{t\ge\epsilon^2}
$
is a non-negative martingale. Since $x\mapsto x^p$ is convex for $p<0$, this martingale property reduces the desired lower bound at time $1$ to the corresponding estimate at time $t_n$ for some $n<N$. The reason for discarding the final finitely many blocks is the same as in \cite{GuTsai_2026_loglogCLT}: we have uniform moment control only for the early-time segments; see Lemma~\ref{prop:E U_i^4 <}, Lemma~\ref{lemma: Block variance}, and Remark~\ref{rmk:early segment}. The decoupling expansion of \cite[Proposition~2.1]{GuTsai_2026_loglogCLT} gives
\[
\mathscr Z_{\epsilon^2,t_n}^{\theta}
(g_\epsilon,\mathbf 1)
=
\prod_{i=1}^{n}
Z_i(g_{t_{i-1}},\mathbf 1)
+R_n,
\]
where the leading term is a product of independent block contributions and the remainder $R_n$ collects the interactions between different blocks. We restrict to an event on which all block fluctuations are moderate. On this event, the negative moment of the product can be estimated sharply, while the ratio of the remainder to the product is shown to be small.
 
To conclude this discussion, we compare our argument in Subsection~\ref{subsection: Local negative moments lower bound} with the recent preprint \cite{Huang2026_fractionalSHF}. Both arguments build on the geometric time  decomposition of \cite{GuTsai_2026_loglogCLT} and approximate the SHF mass by a product of independent block masses. There are, however, two important differences. First, our argument concerns lower bounds for negative moments, whereas \cite{Huang2026_fractionalSHF} studies $p$-th moment for $p\in (0,1)$. Second, the block-variance estimate in \cite[(2.6)]{Huang2026_fractionalSHF}, quoted from \cite[(2.15)]{GuTsai_2026_loglogCLT}, gives only 
\[ 
\mathbb E[U_i^2]
=
\frac{1+O(\rho(b))}{N-i},
\qquad
\rho(b)\longrightarrow0
\quad\text{as }b\downarrow0.
\]
This estimate is sufficient to identify the logarithmic moment exponent $p(p-1)/2$ only up to an $o(1)$ error. In Lemma~\ref{lemma: Block variance}, we use a finer estimate that recovers the exact exponent in the block-product estimate without introducing an additional $o(1)$ loss in the exponent.

\subsection{Structure of the paper}

Section~\ref{section: moment comparison} introduces the Ehrhard-type moment comparison inequality that provides the general framework for the paper. Section~\ref{section: preliminary, SHF} recalls basic facts about DPRE and SHF. In Section~\ref{section: negative moments proof}, we prove Theorems~\ref{thm: SHF moment comparison ineq} and \ref{thm: left tail of SHF}, concerning the moment comparison inequality and the left tail of SHF. In Section~\ref{section: local moments of SHF}, we prove Theorem~\ref{thm: SHF, local moments} on the asymptotics of the local moments of SHF. Finally, Section~\ref{section: GMC and mSHE} discusses applications of the method to GMC and the mollified stochastic heat equation.

\paragraph{\textbf{Notation.} }
We write the heat kernel as
\[
g_t(x)
:=
\frac{1}{2\pi t}
e ^{-\frac{|x|^2 }{2t} }
\quad
t>0,\quad x\in\mathbb R^2,
\]
where $|x|$ denotes the Euclidean norm of $x\in\mathbb R^2$. We denote the standard Gaussian distribution by $\mathcal N(0,1)$.

\section{Ehrhard inequality and moment comparison inequality}\label{section: moment comparison}

\subsection{Ehrhard inequality and Ehrhard-type random variables}\label{subsection: Ehrhard ineq, R.Vs}

The models considered in this paper are all the convex functionals of Gaussian vectors or their limits. To employ this convex-Gaussian structure, let us introduce the Ehrhard inequality first:  
\begin{theorem}[Ehrhard inequality \cite{Ehrhard1983}]
	Set $\Phi(a):= \mathbb P ( \mathcal N(0,1) <a)$ for $a\in \mathbb R \cup \{\infty, -\infty\}$. For any $n\in \mathbb Z_{>0}$, write $\gamma _n$ as the standard Gaussian measure on $\mathbb R^n$. Then, for any two convex sets $A$, $B\subset \mathbb R^n$ and $\lambda \in (0,1)$,
	\[
	\Phi^{-1} \circ \gamma _n (\lambda A + (1-\lambda) B) \ge \lambda \Phi^{-1} \circ \gamma_n(A) + (1-\lambda)  \Phi^{-1} \circ \gamma_n(B).
	\]
\end{theorem}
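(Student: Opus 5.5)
We would prove the Ehrhard inequality through its functional form, in the spirit of Borell's heat-flow proof, rather than through Ehrhard's original Gaussian symmetrization. The functional statement reads as follows. Let $f,g,h:\mathbb R^n\to[0,1]$ be measurable and satisfy
\[
\Phi^{-1}\bigl(h(\lambda x+(1-\lambda)y)\bigr)\ \ge\ \lambda\Phi^{-1}(f(x))+(1-\lambda)\Phi^{-1}(g(y))\qquad\forall x,y,
\]
with the conventions $\Phi^{-1}(0)=-\infty$, $\Phi^{-1}(1)=+\infty$. Then
\[
\Phi^{-1}\Bigl(\int h\,d\gamma_n\Bigr)\ \ge\ \lambda\Phi^{-1}\Bigl(\int f\,d\gamma_n\Bigr)+(1-\lambda)\Phi^{-1}\Bigl(\int g\,d\gamma_n\Bigr).
\]
The set version follows by taking $f=\mathbf 1_A$, $g=\mathbf 1_B$ and $h=\mathbf 1_{\lambda A+(1-\lambda)B}$. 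The hypothesis is then trivially satisfied: the right-hand side is $-\infty$ unless $x\in A$ and $y\in B$, and in that case $h=1$.

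\textbf{Heat-flow set-up.} Write $P_s$ for the heat semigroup with generator $\tfrac12\Delta$, and set $u_f(t,x):=\Phi^{-1}(P_{1-t}f(x))$ for $t\in[0,1]$; define $u_g,u_h$ in the same way. Since $\Phi'' (a)=-a\Phi'(a)$, a direct computation shows that each $u$ solves the backward nonlinear equation
\[
\partial_t u+\tfrac12\Delta u-\tfrac12\,u\,|\nabla u|^2=0 .
\]
On the variables $(t,x,y)$ with $z=\lambda x+(1-\lambda)y$, define
\[
C(t,x,y):=u_h(t,z)-\lambda u_f(t,x)-(1-\lambda)u_g(t,y).
\]
By hypothesis $C(1,\cdot,\cdot)\ge0$. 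The goal is $C(0,0,0)\ge 0$, which is exactly the claimed inequality.

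\textbf{Parabolic maximum principle.} We propagate nonnegativity backward in time with the degenerate operator
\[
\mathcal L=\tfrac12\sum_i(\partial_{x_i}+\partial_{y_i})^2 .
\]
Acting on $u_f(x)$, $u_g(y)$ and $u_h(z)$, this operator reproduces $\tfrac12\Delta$ in each variable, because $\partial_{x_i}+\partial_{y_i}$ acts on $z$ as $\lambda+(1-\lambda)=1$ times $\partial_{z_i}$. Hence
\[
\partial_tC+\mathcal LC=\tfrac12\Bigl(u_h|\nabla u_h|^2-\lambda u_f|\nabla u_f|^2-(1-\lambda)u_g|\nabla u_g|^2\Bigr).
\]
Suppose $C$ attains a negative minimum at an interior point. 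There $\nabla_xC=\nabla_yC=0$, which forces $\nabla u_h(z)=\nabla u_f(x)=\nabla u_g(y)=:v$. The right-hand side then collapses to $\tfrac12|v|^2\,C$, a linear zeroth-order term in $C$. A standard weighting, working with $e^{-Kt}C$ or with $C$ plus a small barrier, then yields a contradiction with $C(1,\cdot)\ge0$.

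\textbf{Main obstacle: regularity and behaviour at infinity.} We first reduce to $f,g,h$ smooth with values in $[\delta,1-\delta]$. This is done by mixing with constants and using inner regularity, while keeping the hypothesis up to a small loss, and then letting $\delta\to0$ by monotone convergence. Second, the minimum of $C$ may escape to infinity. To control this we would use that $P_{1-t}f$ is bounded away from $0$ and $1$ with controlled gradients, and add a penalization such as $\eta(|x|^2+|y|^2+ct)$, whose effect vanishes as $\eta\to0$. The case $t$ close to $1$, where $P_{1-t}$ degenerates, also needs care: start the argument from $t=1-\tau$ using the continuity of the flow. We expect this approximation and compactness step, not the PDE identity itself, to be where most of the work lies.
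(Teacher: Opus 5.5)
Your route differs from the paper's only in that the paper gives no proof at all. It quotes the statement from \cite{Ehrhard1983}, where it is proved by Gaussian symmetrization for convex sets, and mentions Borell's extension \cite{Borell2003_Ehrhard}. What you propose is essentially Borell's heat-flow proof, and its core is sound:
\begin{itemize}
\item $u=\Phi^{-1}(P_{1-t}f)$ does solve $\partial_tu+\tfrac12\Delta u-\tfrac12u|\nabla u|^2=0$;
\item $\mathcal L$ acts as $\tfrac12\Delta$ on $u_h(\lambda x+(1-\lambda)y)$ because $\lambda+(1-\lambda)=1$;
\item at a spatial critical point the nonlinearity collapses to $\tfrac12|v|^2C$.
\end{itemize}
Compared with symmetrization, this buys the functional inequality for arbitrary measurable $f,g,h$, hence the Borel-set version. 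The paper only needs convex sets: Corollary~\ref{corollary: Ehrhard for convex functional} is applied to sublevel sets of a convex $F$, which are convex and satisfy $\lambda\{F<r_0\}+(1-\lambda)\{F<r_1\}\subseteq\{F<\lambda r_0+(1-\lambda)r_1\}$. The price of your route is the regularization, and two points of the sketch need repair.

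The first is a sign in the weighting. Data sit at $t=1$ and you propagate backward, so the useful weight is $D:=e^{Kt}C$ with $K>0$. At a spatial critical point this gives $\partial_tD+\mathcal LD=(K+\tfrac12|v|^2)D$. At a negative minimum with $t_0<1$ the left side is $\ge0$, while the right side is $<0$. With $e^{-Kt}$ the coefficient $-K+\tfrac12|v|^2$ has no sign, so no contradiction follows.

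The second point is more substantive: the reduction ``by mixing with constants'' fails as stated. For indicators, the hypothesis holds only through the convention $(-\infty)+(+\infty)=-\infty$, which you should state explicitly. A point $x\notin A$ kills the right-hand side whatever $y$ is. Suppose you replace $f,g$ by $\delta+(1-2\delta)f$ and $\delta+(1-2\delta)g$. Then one mixed configuration gives the right-hand side $|1-2\lambda|\,\Phi^{-1}(1-\delta)\ge0$:
\begin{itemize}
\item if $\lambda\le\frac12$, take $x\notin A$, $y\in B$, which forces $h\ge\frac12$ on $\lambda A^c+(1-\lambda)B$;
\item if $\lambda\ge\frac12$, take $x\in A$, $y\notin B$, which forces $h\ge\frac12$ on $\lambda A+(1-\lambda)B^c$.
\end{itemize}
Either set is typically almost all of $\mathbb R^n$, so the loss is not small.

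What works is an \emph{asymmetric} truncation in the $\Phi^{-1}$-scale. Write $F=\Phi^{-1}\circ f$, $G=\Phi^{-1}\circ g$, $H=\Phi^{-1}\circ h$. Set $F_M=\max(\min(F,M),-M^2)$, define $G_M$ likewise, and set $H_M=\min(\max(H,-cM^2),M)$ with $c=\tfrac12\min(\lambda,1-\lambda)$. For $M\ge1/c$ the hypothesis survives:
\begin{itemize}
\item if $F(x)<-M^2$ or $G(y)<-M^2$, the right-hand side is at most $-cM^2$;
\item otherwise it is at most $\min(H,M)$.
\end{itemize}
Next, convolve $F_M,G_M,H_M$ with one common mollifier. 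The hypothesis is linear in $(F,G,H)$ and $\lambda(x-w)+(1-\lambda)(y-w)=z-w$, so it is preserved. The functions $\Phi(F_M*\rho)$, etc., are then smooth, with bounded derivatives and values in $[\Phi(-M^2),\Phi(M)]$. All integrals converge by dominated convergence, so no inner regularity is needed. Moreover $C$ is smooth up to $t=1$, so your concern near $t=1$ disappears.

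Your penalization $\eta(|x|^2+|y|^2)$ then closes the argument. At the penalized minimizer, $\eta(|x|^2+|y|^2)\le2\|D\|_\infty$. Hence the gradients $\nabla u_f(x)$, $\nabla u_g(y)$, $\nabla u_h(z)$ agree only up to $O(\eta(|x|+|y|))=O(\sqrt\eta)$. This perturbs $(K+\tfrac12|v|^2)D$ by $O(\sqrt\eta)$. Together with the term $2n\eta$ from $\mathcal L$ acting on the penalty, this error is beaten by the fixed margin $\tfrac K2\inf D<0$.
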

In \cite{Borell2003_Ehrhard}, the inequality above is extended for any two Borel sets. As a direct corollary, we have 
\begin{corollary}[Lemma 3.1 in \cite{F2025_Gaussian_convex_inequality}]\label{corollary: Ehrhard for convex functional}
Assume $F: \mathbb R^n \to \mathbb R \cup \{ \infty\}$ is a convex function and $G$ is an $n$-dimensional standard Gaussian vector. Then
\begin{equation}\label{eqEharhard: concave}
r\mapsto \Phi^{-1} \circ \mathbb P(F(G) <r) \text{ is concave.}
\end{equation}
\end{corollary}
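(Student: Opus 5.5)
The plan is to reduce the statement to Ehrhard's inequality (as extended to Borel sets by Borell) applied to the sublevel sets of $F$. For $r\in\mathbb R$ set $A_r:=\{x\in\mathbb R^n: F(x)<r\}$. Since $F$ is convex with values in $\mathbb R\cup\{\infty\}$, each $A_r$ is a convex set, possibly empty. Write $\psi(r):=\Phi^{-1}(\gamma_n(A_r))$, which takes values in $[-\infty,\infty]$. Concavity should then be understood in the extended sense: on the set where $\psi>-\infty$ we want $\psi(\lambda r+(1-\lambda)s)\ge \lambda\psi(r)+(1-\lambda)\psi(s)$.

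The first step is the set inclusion
\[
\lambda A_r+(1-\lambda)A_s\subset A_{\lambda r+(1-\lambda)s},\qquad \lambda\in(0,1).
\]
To see it, take $x\in A_r$ and $y\in A_s$. Convexity gives $F(\lambda x+(1-\lambda)y)\le \lambda F(x)+(1-\lambda)F(y)<\lambda r+(1-\lambda)s$. Both values are finite here because $x$ and $y$ lie in the sublevel sets, so there is no $\infty\cdot 0$ issue.

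The second step combines this inclusion with monotonicity of $\gamma_n$ and of $\Phi^{-1}$, and then applies Ehrhard's inequality to the convex sets $A_r$ and $A_s$:
\[
\psi(\lambda r+(1-\lambda)s)\ge \Phi^{-1}\circ\gamma_n\bigl(\lambda A_r+(1-\lambda)A_s\bigr)\ge \lambda\psi(r)+(1-\lambda)\psi(s).
\]
This is exactly the concavity of $r\mapsto\Phi^{-1}\circ\mathbb P(F(G)<r)$, since $\mathbb P(F(G)<r)=\gamma_n(A_r)$.

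The only real obstacle is the degenerate cases. If $A_r$ or $A_s$ is empty or $\gamma_n$-null, the right-hand side is $-\infty$ and the inequality is trivial. If a measure equals $1$, the corresponding value of $\psi$ is $+\infty$; the inequality still holds because the left-hand set has measure at least that of $A_s$, but this should be stated explicitly. Ehrhard's inequality for convex sets is usually written for sets of positive measure, so I would treat these boundary cases separately as just described. Borel measurability of the Minkowski combination is not an issue, since it is convex and convex sets are Lebesgue measurable. Borell's extension to Borel sets is not actually needed, because the sublevel sets are already convex.
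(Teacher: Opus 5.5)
Your proposal is correct and follows the same route the paper intends: the paper records this as a direct corollary of Ehrhard's inequality, which amounts to applying it to the convex sublevel sets $A_r=\{F<r\}$ together with the inclusion $\lambda A_r+(1-\lambda)A_s\subset A_{\lambda r+(1-\lambda)s}$ and monotonicity of $\Phi^{-1}\circ\gamma_n$. One small tidy-up concerns the case $\gamma_n(A_s)=1$, $s>r$, $\gamma_n(A_r)>0$: there $A_{\lambda r+(1-\lambda)s}$ need not contain $A_s$, so instead note that $\lambda A_r+(1-\lambda)A_s$ contains the affine copy $\lambda a+(1-\lambda)A_s$ (for any $a\in A_r$) of a conull set, which is itself conull.
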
 

Heuristically, since $\Phi ^{-1}$ is an increasing function, the concavity of $\Phi^{-1} \circ \mathbb P(F(G) <r)$ implies the left tail $P(F(G) <r)$ should decrease rather fast when $r\to -\infty$. Indeed, in \cite{GV_2018_convexGaussian_lefttail} and \cite{Valettas2019_convexGaussian_lefttail}, Ehrhard inequality is used to deduce the following left tail for convex functionals of Gaussian vectors. 

\begin{theorem}[ \cite{Valettas2019_convexGaussian_lefttail}]
Assume $F: \mathbb R^n \to \mathbb R \cup \{ \infty\}$ is a convex function, $G$ is an $n$-dimensional standard Gaussian vector and $\mathbb E [ |F(G) |] <\infty$. Write $M$ as the median of $F(G)$. Then 
\[
\mathbb P \left(
F(G) -M < -t \mathbb E [(F(G) -M)_+]
\right) \le \mathbb P \left(\mathcal N(0,1) >\frac{t}{\sqrt{2\pi}} \right),\quad t\ge 0.
\]
\end{theorem}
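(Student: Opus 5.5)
The idea is to exploit the concavity from Corollary~\ref{corollary: Ehrhard for convex functional} and compare $F(G)$ with a one-dimensional Gaussian transported along the concave function $\psi(r):=\Phi^{-1}(\mathbb P(F(G)<r))$. Write $X:=F(G)$, which is integrable, and let $M$ be its median, so that $\psi(M)\le 0$ and $\psi(r)\ge 0$ for every $r>M$. Concavity of $\psi$ means that, to the left of the median, $\psi$ lies below its supporting line at any point $r_0>M$. Concretely, for $r_0>M$ and $r<M$,
\[
\psi(r)\le \psi(r_0)+\psi'_-(r_0)(r-r_0),
\]
and the one-sided derivative $\psi'_-(r_0)$ exists and is nonnegative because $\psi$ is nondecreasing. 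The task is therefore to bound the slope $a:=\psi'_-(M+)$, or more precisely the slopes of chords starting at $M$, from below in terms of $\mathbb E[(X-M)_+]$.

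Step 1 bounds the right tail. Concavity gives $\psi(r)\le \psi(M)+a(r-M)\le a(r-M)$ for $r\ge M$, where $a$ is the right derivative at $M$ (or a limit of chord slopes). Hence
\[
\mathbb P(X\ge r)\ge 1-\Phi(a(r-M)).
\]
Integrating over $r>M$ then yields
\[
\mathbb E[(X-M)_+]=\int_M^\infty \mathbb P(X>r)\,dr\ \ge\ \int_0^\infty \bigl(1-\Phi(as)\bigr)\,ds=\frac{1}{a}\,\mathbb E[\mathcal N(0,1)_+]=\frac{1}{a\sqrt{2\pi}}.
\]
So $a\ge 1/\bigl(\sqrt{2\pi}\,\mathbb E[(X-M)_+]\bigr)$. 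Some care is needed at atoms and at points where $\mathbb P(X<r)$ differs from $\mathbb P(X\le r)$, but only Lebesgue-a.e. values of $r$ matter for the integral.

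Step 2 handles the left tail. For $r=M-t\,\mathbb E[(X-M)_+]$, concavity applied on the other side of $M$ gives
\[
\psi(r)\le \psi(M)-a(M-r)\le -a\,t\,\mathbb E[(X-M)_+]\le -\frac{t}{\sqrt{2\pi}}.
\]
Applying $\Phi$ gives $\mathbb P(X<r)\le \Phi(-t/\sqrt{2\pi})$, which is exactly the claimed bound.

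The main obstacle is the regularity at the median. If the law of $X$ has an atom, or $\psi$ takes the value $\pm\infty$ (for instance when $F$ takes the value $+\infty$), the derivative $a$ may be degenerate. The degenerate cases are trivial: if $a=\infty$ then the left tail vanishes, and $\mathbb E[(X-M)_+]=0$ forces the statement to be read as trivial. Otherwise I would phrase everything in terms of chord slopes, $\bigl(\psi(r_2)-\psi(r_1)\bigr)/(r_2-r_1)$ with $r_1<M<r_2$, which concavity orders monotonically, and pass to the limit.
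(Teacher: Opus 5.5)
The paper does not prove this statement; it quotes it from Valettas and only indicates that it comes from Ehrhard's inequality through the Gaussian rearrangement, i.e.\ the convex nondecreasing $f$ with $X\overset{d}{=}f(g)$ built in the proof of Lemma~\ref{lemma: Ehrhard and convex Gaussian}. Your proof is correct, and it is that same argument written in inverse coordinates. Here $\psi$ is the generalized inverse of $f$, $f(0)=M$, and your slope $a=\psi'_+(M)$ plays the role of $1/s$ for a subgradient $s\in\partial f(0)$.

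In the rearranged picture, the single supporting line $f(z)\ge M+sz$ gives both of your steps at once:
\begin{itemize}
\item $\mathbb E[(X-M)_+]\ge s\,\mathbb E[g_+]=s/\sqrt{2\pi}$, which is your Step 1 after the change of variables $r=f(z)$;
\item $\{f(g)<M-u\}\subset\{sg<-u\}$, which is your Step 2.
\end{itemize}
That version takes two lines and needs no case analysis: if $s=0$ then $X\ge M$ a.s.\ and the left-hand side vanishes. Yours works directly from Corollary~\ref{corollary: Ehrhard for convex functional} without building $f$. The price is bookkeeping with infinite values of $\psi$ and with one-sided derivatives.

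On that bookkeeping, one sentence of yours needs an actual argument. The case $\mathbb E[(X-M)_+]=0$ is not trivial just from the statement. For $t>0$ it asserts $\mathbb P(X<M)\le\Phi(-t/\sqrt{2\pi})<1/2$, while the definition of the median only gives $\mathbb P(X<M)\le 1/2$. So you need $\mathbb P(X<M)=0$.

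A clean way to organize all the degenerate cases is the following. By left-continuity of $r\mapsto\mathbb P(X<r)$, the set $\{\psi>-\infty\}$ is an open half-line. On it $\psi$ is concave and, for nonconstant $X$, finite; hence it is continuous there with finite one-sided derivatives. This leaves two cases:
\begin{itemize}
\item $\mathbb P(X<M)=0$, which also covers constant $X$. Then the left-hand side is $0$ for every $t\ge 0$.
\item $M$ lies in that open half-line. Then $a\in[0,\infty)$, and Step 1 forces $a>0$, since $a=0$ would make $\mathbb E[(X-M)_+]$ infinite. Step 1 then gives $\mathbb E[(X-M)_+]\ge 1/(a\sqrt{2\pi})>0$, and Step 2 applies verbatim.
\end{itemize}
In particular, your worry about an atom at the median does not arise. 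Continuity of $\psi$ at interior points rules such an atom out unless $\mathbb P(X<M)=0$, which is the trivial case.
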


As pointed out in Section 1.1 of \cite{Chen2026_Ehrhard_convex1}, by using 
\[
\mathbb E [(F(G) -M)_+] \le \sqrt{\var F(G)},\quad M \ge \mathbb E F(G) -\sqrt{\var F(G)},  
\]
we obtain a left tail of $F(G)$ sensitive to its variance: 
\begin{equation}\label{eqEhrhard: F(G) left tail 1}
\mathbb P \left(
F(G) -\mathbb E [F(G)] < -t \sqrt{\var F(G) }
\right)\le \mathbb P ( \mathcal N(0,1) > \frac{t-1}{\sqrt{2\pi} } ),\quad t\ge 1.
\end{equation}
With $x e^{x^2 /2} \mathbb P ( \mathcal N(0,1) > x)\to \frac{1}{\sqrt{2\pi} }$, the left tail in \eqref{eqEhrhard: F(G) left tail 1} decreases as $e^{-t^2 /(4\pi) + O(t) }$ as $t\to +\infty$. 
Especially, the left tail \eqref{eqEhrhard: F(G) left tail 1} obtained from the Ehrhard inequality is good enough to deduce the finiteness of each negative moment of $e^{F(G) }$. In \cite{Chen2026_Ehrhard_convex1}, the decreasing rate of left tail is further improved to be $e^{-t^2 /2}$ as a corollary of Gaussian convexity inequality we will introduce in the next subsection. 

In what follows, let us discuss whether it is possible to extend the concavity in \eqref{eqEharhard: concave} for more general random variables instead of only convex functionals of Gaussian vectors. To make the discussion here clear, we give a rigorous definition first:
\begin{definition}[Ehrhard-type random variables]\label{def: Ehrhard-type random variables}
	Let $\Phi(a):= \mathbb P (\mathcal N(0,1)<a)$ for any $a\in \mathbb R$. Define its generalized inverse function on $[0,1 )$:
	\[
	\widehat{\Phi}^{-1}(u):=
	\begin{cases}
		\Phi^{-1}(u),&u\in(0,1),\\
		-\infty,&u=0.
	\end{cases}
	\]
	A real-valued random variable $X$ is said to be Ehrhard-type if 
	\begin{enumerate}
\item it has no atom at $\infty$ or $-\infty$, and
\[
\mathbb P(X<t)<1,
\qquad t\in\mathbb R,
\] 
\item 
$
h_X(t):=
\widehat{\Phi}^{-1}\bigl(\mathbb P(X<t)\bigr)
$
is a concave function from $\mathbb R$ to
$\mathbb R\cup\{-\infty\}$ that is not identically $-\infty$.
	\end{enumerate}
\end{definition}
By Corollary \ref{corollary: Ehrhard for convex functional}, we know that a convex functional of some Gaussian vector is Ehrhard-type. Reversely, the following lemma shows that  an Ehrhard-type random variable must have the same law as a convex functional of a standard Gaussian random variable.

\begin{lemma}[Theorem 4.3 in \cite{F2025_Gaussian_convex_inequality}]\label{lemma: Ehrhard and convex Gaussian}
	Assume $X$ is an Ehrhard-type random variable. Then there is a finite-valued convex function $f$ such that $X$ has the same law as $f(g)$ {, where} $g$ {is} a standard Gaussian random variable. In particular, 
	\begin{align*}
		\bbE[X]>-\infty,
		\quad\text{ and }\quad 
		\bbE[e^{pX}]<\infty
		\quad 
		\text{ for all }p<0.
	\end{align*}
\end{lemma}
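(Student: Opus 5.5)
The natural approach is a quantile transform. Let $G_X(t):=\mathbb P(X<t)$ and $h_X=\widehat\Phi^{-1}\circ G_X$, which is concave, nondecreasing, and not identically $-\infty$ by Definition~\ref{def: Ehrhard-type random variables}. Set $a:=\inf\{t: h_X(t)>-\infty\}\in[-\infty,\infty)$. On $(a,\infty)$, $h_X$ is finite, concave and nondecreasing, hence continuous. The condition $\mathbb P(X<t)<1$ for all $t$, together with no atom at $+\infty$, gives $h_X(t)\to+\infty$ as $t\to\infty$. I will define the candidate convex function as the generalized inverse
\[
f(y):=\inf\{t\in\mathbb R:\ h_X(t)>y\},\qquad y\in\mathbb R .
\]
This is finite for each $y$: it is $<\infty$ because $h_X\to\infty$, and $\ge a>-\infty$ unless $a=-\infty$. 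If $a=-\infty$, then $h_X$ is finite everywhere, and since it is concave and unbounded above, it must tend to $-\infty$ as $t\to-\infty$. Thus every level is crossed and $f(y)$ is again finite.

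The convexity step goes as follows. The epigraph-type set $\{(y,t): h_X(t)>y\}$ is the strict hypograph of a concave function (restricted to $t>a$), so it is convex. Then $f(y)=\inf\{t:(y,t)\in\text{that set}\}$ is the infimum of the fibre of a convex set, which is a convex function of $y$. Monotonicity of $h_X$ is what makes the fibres half-lines, so that this infimum really is a pointwise inverse. Equivalently, $f$ is the inverse of an increasing concave function, and such inverses are convex; I would verify this directly with the two-point inequality.

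Next I need the equality of laws. For a standard Gaussian $g$ I will check that $\mathbb P(f(g)<t)=\mathbb P(X<t)$ for all $t$, using $\{f(g)<t\}=\{g<h_X(s)\text{ for some }s<t\}=\{g<h_X(t^-)\}$. Hence $\mathbb P(f(g)<t)=\Phi(h_X(t^-))=G_X(t^-)=G_X(t)$, where the last equality uses left continuity of $t\mapsto\mathbb P(X<t)$. This left continuity is the subtle bookkeeping point and is where I expect most care to be needed: boundary behaviour at $a$ (a possible atom of $X$ at $a$, where $h_X$ jumps from $-\infty$ to a finite value) and flat pieces of $h_X$ (gaps in the support) must be handled via left limits. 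Agreement of $\mathbb P(\cdot<t)$ for all $t$ determines the law.

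The two integrability consequences are then routine. A finite convex function on $\mathbb R$ satisfies $f(y)\ge f(0)+c\,y$ for a subgradient $c$, so $f(g)\ge f(0)-|c||g|$. This gives $\mathbb E[X]=\mathbb E[f(g)]>-\infty$, since the positive part causes no trouble for the value $-\infty$. For $p<0$ it gives $e^{pf(g)}\le e^{pf(0)+|p||c||g|}$, which is integrable against the Gaussian density.
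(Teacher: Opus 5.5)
Your proposal is correct and follows essentially the same route as the paper: the generalized inverse $f(z)=\inf\{t: h_X(t)>z\}$, finiteness of $f$, the identification $\{f(g)<t\}=\{g<h_X(t)\}$ via left-continuity of $t\mapsto\mathbb P(X<t)$, and an affine minorant of $f$ for the integrability claims. Your additions fill gaps the paper leaves open: you justify convexity explicitly as the infimal projection of the convex strict hypograph of $h_X$, which the paper only asserts, and you get finiteness from below by showing concavity forces $h_X(t)\to-\infty$, where the paper invokes the no-atom condition.
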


For completeness, we give a self-contained proof here based on the idea of generalized reverse function \eqref{eqEhrhard: generalized inverse, f} which was once used in \cite{Valettas2019_convexGaussian_lefttail} under the name 'Gaussian rearrangement'. 
\begin{proof}
	Recall $h_X$ defined in Definition \ref{def: Ehrhard-type random variables}. Set its generalized inverse function as
	\begin{equation}\label{eqEhrhard: generalized inverse, f}
f(z):= \inf \left\{
t\in \mathbb R \cup \{ -\infty , \infty\} : h_X(t) >z
\right\}
	\end{equation}
	which is non-decreasing and convex since $h_X$ is non-decreasing and concave. We check that $f$ is $\mathbb R$-valued by contradiction. Assume {that } there {exists} $z\in \mathbb R$ such that $f(z) =\infty$. Then for any $t\in \mathbb R$, $h_X(t)\le z$, and thus $\mathbb P (X<t)\le \Phi(z) <1$ for any $t$, which contradicts no-atomness at infinity in definition \ref{def: Ehrhard-type random variables}. Similar discussion gives $f(z)$ is not $-\infty$ for any $z\in \mathbb R$.  
	
	We claim that for any $z,\ t\in \mathbb R$, $f(z) <t$ is equivalent to $h_X (t)>z$. Indeed, if $h_X(t) >z$, we have $P(X<t)>0$. By the left continuity of $t\mapsto P(X<t)$, we have $h_X(t-\Delta t) >z$ for some small $\Delta t >0$. Thus $f(z) \le t-\Delta t<t$. The reverse direction simply {follows} from the definition of $f(\cdot)$.   
	
	As a result of the equivalence above, for a standard Gaussian random variable $g$, we have 
	\[
	\mathbb P (f(g) <t) = P (g<h(t) ) =\Phi \circ \Phi^{-1} \circ \mathbb P (X<t) = \mathbb P (X<t),\quad \forall t\in \mathbb R.
	\]
	Thus $X$ has the same law as $f(g)$. Since $f$ is convex and finite, there are $a,\ b\in \mathbb R$ such that $f(z)\ge az +b$ for all $z\in \mathbb R$. For any $p<0$, we have 
	\[
	\mathbb E [e^{pX} ] =\mathbb E [e^{pf(g)}] \le \mathbb E 
	[ e^{p(ag+b)}]<\infty .
	\]
	{Due to the same reason}, we get 
	\[
 \mathbb E [X ] = \mathbb E [f(g) ] \ge \mathbb E [(ag + b)] >-\infty.
	\]
\end{proof}

By Lemma \ref{lemma: Ehrhard and convex Gaussian}, Ehrhard-type property cannot enlarge the class of distribution in Theorem \ref{thm: Ehrhard-type convexity} beyond convex Gaussians. However, we show in the following Lemma that Ehrhard-type property is closed under the weak limit, which makes it well suited for convex-Gaussian limits like SHF and GMC discussed in Section \ref{section: moment comparison}--Section \ref{section: GMC and mSHE}. 

\begin{lemma}\label{lemma: Ehrhard type is close}
	{Let} $\{X_N: N =1,...\}$ be Ehrhard-type. If $X_N$ converges weakly to a non-constant random variable $X$, then $X$ is also Ehrhard-type. 
\end{lemma}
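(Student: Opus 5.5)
Write $F_N(t):=\mathbb P(X_N<t)$, $F(t):=\mathbb P(X<t)$ and $h_N:=h_{X_N}$. The plan is to pass concavity of $h_N$ to the limit at continuity points of $F$, and then extend to all $t$ using left continuity. The conditions at infinity in Definition~\ref{def: Ehrhard-type random variables} are automatic, since $X$ is a real-valued weak limit. Let $D$ be the co-countable, hence dense, set of continuity points of the law of $X$. For $t\in D$ we have $F_N(t)\to F(t)$. Because $\widehat\Phi^{-1}$ is continuous from $[0,1)$ into $[-\infty,\infty)$ at every $u<1$, we get $h_N(t)\to h(t):=\widehat\Phi^{-1}(F(t))$ for every $t\in D$ with $F(t)<1$.

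The key step is to rule out $F(t)=1$. Here I use non-constancy of $X$. Pick $a<b$ in $D$ with $0<F(a)\le F(b)<1$; such points exist because $X$ is non-constant and $D$ is dense. Then $h_N(a)\to h(a)$ and $h_N(b)\to h(b)$, both finite, and $h(b)-h(a)\ge 0$. Suppose $F(t_0)=1$ for some $t_0>b$ in $D$. Then $h_N(t_0)\to+\infty$. By concavity of $h_N$ (three-slope inequality), the slope $(h_N(t_0)-h_N(b))/(t_0-b)$ is at most $(h_N(b)-h_N(a))/(b-a)$, which stays bounded. Hence $h_N(t_0)$ stays bounded, a contradiction. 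So $F(t)<1$ on $D$, and by monotonicity for all real $t$.

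This step is the main obstacle, and it needs care in one respect. I must make sure there are continuity points $a<b$ where $0<F<1$. If $X$ is non-constant, there is $s$ with $0<\mathbb P(X<s)<1$. I then choose $a<b$ in $D$ slightly below $s$ with $F(a)>0$. This works because $F$ is left continuous, so $F(a)\to F(s)>0$ as $a\uparrow s$. Also $F(b)\le F(s)<1$.

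Now $h=\lim h_N$ on $D$ is a pointwise limit of concave functions with values in $[-\infty,\infty)$, so it is concave on $D$. It is not identically $-\infty$, since $h(a)$ is finite. To pass from $D$ to all of $\mathbb R$, note that $t\mapsto F(t)$ is left continuous and $\Phi^{-1}$ is continuous. Hence $h(t)=\lim_{s\uparrow t,\, s\in D}h(s)$ for every $t$. The concavity inequality $h(\lambda x+(1-\lambda)y)\ge\lambda h(x)+(1-\lambda)h(y)$ can then be obtained by approximating $x,y$ from the left by points $x_k,y_k\in D$ whose convex combination also lies in $D$. Since $D$ is co-countable, for fixed $\lambda$ we can choose $x_k\uparrow x$ in $D$ and $y_k\uparrow y$ in $D$ such that $\lambda x_k+(1-\lambda)y_k\in D$, increasing to $\lambda x+(1-\lambda)y$. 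Taking limits yields concavity on $\mathbb R$, which shows that $X$ is Ehrhard-type.
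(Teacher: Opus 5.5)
Your proof is correct and follows essentially the same route as the paper. You rule out $\mathbb P(X<t)=1$ using the concave slope bound through two continuity points where $0<F<1$, pass concavity to the limit at continuity points, and extend to all of $\mathbb R$ by common left shifts that avoid the countably many atoms, together with left continuity of $F$. The only cosmetic difference is in the first step: the paper treats an arbitrary $x$ with $\mathbb P(X<x)=1$ directly via the Portmanteau bound $\liminf_N \mathbb P(X_N<x)\ge \mathbb P(X<x)$, whereas you restrict to continuity points and conclude by density and monotonicity.
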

\begin{proof}
	We firstly prove that $P(X<x)\ne 1$ for all $x\in \mathbb R$ by contradiction. Assume $P(X<x) = 1$ for some $x\in \mathbb R$. Since $X$ is not constant by our assumption, we can {choose} two continuity points {from its distribution function}, say $x_1<x_2<x$, satisfying $P(X<x_i) \in (0,1)$ for $i\in \{1,2\}$. Since $X_N$ are Ehrhard-type, we have 
	\[
	h_{X_N}(x) \le  
	h_{X_N}(x_2) + \frac{x-x_2}{x_2-x_1} \left(
	h_{X_N}(x_2)-h_{X_N}(x_1)
	\right)
	\]
	with $h_{X_N} (\cdot) := \hat \Phi ^{-1} \circ \mathbb P(X_N <\cdot)$. Then by weak convergence,
	\[
	\begin{aligned}
		& \limsup_N h_{X_N}(x) 
		\le h_{X}(x_2) + \frac{x-x_2}{x_2-x_1} \left(
		h_{X}(x_2)-h_{X}(x_1)
		\right)<\infty. 
	\end{aligned}
	\]
	However, by Portmanteau lemma, 
	\[
	\liminf P (X_N <x) \ge P(X<x) =1,
	\]
	so $h_{X_N}(x)\rightarrow\infty$, which is a contradiction.  
	
	Next, we prove the concavity of $h_X(\cdot)$. Fix any $x_0$, $x_1$, and $x_\lambda := \lambda x_0 + (1-\lambda) x_1$ for some $\lambda \in (0,1)$. If $P(X<x_0)=0$, we have 
	\[
	h_X(x_\lambda) \ge \lambda h_X (x_0) + (1-\lambda ) h_X (x_1)
	\]
	with $h_X$ in Definition \eqref{def: Ehrhard-type random variables}, since the right hand side is $-\infty$. The same {discussion} holds for {the cases} $P(X<x_\lambda)=0$ or $P(X<x_1) =0$. As a result, it suffices to consider the case when all these three probabilities belong to $(0,1)$. Since the set of atoms for $X$ is countable, we can take a sequence of continuity points for the law of $X$, say 
	\begin{equation}\label{Ehrhard: continuity points of law of X}
		\{ x_0-\epsilon_k,\ x_1-\epsilon_k,\ x_\lambda -\epsilon_k:\ k=1,...\}
	\end{equation}
	with $\epsilon_k \downarrow 0$. Since $\{X_N: N=1,...\}$ are Ehrhard-type random variables, we have 
	\[
	h_{X_N}(x_\lambda -\epsilon_k) \ge 
	\lambda h_{X_N}(x_0 -\epsilon_k) +(1-\lambda)
	h_{X_N}(x_1 -\epsilon_k). 
	\] 
	We let $N\to \infty$. Since $X_N$ weakly converges to $X$ and all the points in \eqref{Ehrhard: continuity points of law of X} are {continuity points of the distribution function of} $X$, we have 
	\[
	h_X(x_\lambda -\epsilon_k) \ge 
	\lambda h_X(x_0 -\epsilon_k) +(1-\lambda)
	h_X(x_1 -\epsilon_k). 
	\]
	By the continuity of $\Phi^{-1}$ on $(0,1)$ and also the left continuity of $x\mapsto P(X <x)$, letting $\epsilon_k \downarrow 0$, we get 
	\[
	h_X(x_\lambda ) \ge 
	\lambda h_X(x_0 ) +(1-\lambda)
	h_X(x_1 )
	\]
	which finishes the proof. 
\end{proof}

\subsection{Gaussian convexity property and Ehrhard-type moments comparison inequality}\label{subsection: Ehrhard-type moment comparison}
The main goal of this subsection is to introduce the Gaussian convexity property  and its corollary Ehrhard-type moments comparison inequality which serves as a powerful tool to estimate the moments of Ehrhard-type random variables (see Definition \ref{def: Ehrhard-type random variables}). To start with, we introduce the Gaussian convexity property from \cite{Chen2026_Ehrhard_convex1}.
\begin{theorem}[Theorem 1 in \cite{Chen2026_Ehrhard_convex1}] \label{thm: Chen, thm 1 on convexity}
	Assume $F$ is a convex function on $\mathbb R^d$ and $G$ is a standard finite-dimensional Gaussian vector. Assume $\mathbb E [e^{\lambda F(G)}]<\infty $ for every $\lambda >0$. Then the function 
	\begin{equation}\label{eqChen: Phi function}
		\Lambda_{F(G)} (\lambda):= \begin{cases}
			\frac{\log \mathbb E [e^{\lambda F(G)}]}{\lambda} & \text{ if } \lambda \ne 0\\
			\mathbb E [F(G)] & \text{ if } \lambda = 0
		\end{cases}
	\end{equation}
	is $\mathbb R$-valued and convex. 
\end{theorem}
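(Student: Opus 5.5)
The plan is to reduce convexity of $\Lambda:=\Lambda_{F(G)}$ to a sign condition on the third cumulant of $F$ under exponentially tilted Gaussian measures. I would then prove that sign condition after collapsing to one dimension.

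\emph{Finiteness and smoothness.} Since $F$ is convex, $F(x)\ge a\cdot x+b$ for some $a\in\mathbb R^d$, $b\in\mathbb R$. Hence $\mathbb E[e^{\lambda F(G)}]<\infty$ also for $\lambda<0$, and $\mathbb E[F(G)]>-\infty$. Together with the hypothesis for $\lambda>0$, this makes $\psi(\lambda):=\log\mathbb E[e^{\lambda F(G)}]$ finite and real-analytic on $\mathbb R$. It follows that $\Lambda(\lambda)=\psi(\lambda)/\lambda$ is smooth on $\mathbb R$, with value $\psi'(0)=\mathbb E[F(G)]$ at $0$. So it suffices to show $\Lambda''\ge0$ for $\lambda\neq0$, and then conclude by continuity.

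\emph{Reduction to a cumulant sign.} Let $\mu_\lambda$ be the probability measure proportional to $e^{\lambda F}\,d\gamma_d$. A direct computation gives
\[
\lambda^3\Lambda''(\lambda)=\lambda^2\psi''(\lambda)-2\bigl(\lambda\psi'(\lambda)-\psi(\lambda)\bigr),
\]
where $\psi''(\lambda)=\operatorname{Var}_{\mu_\lambda}(F)$. Taylor's formula with integral remainder, expanding $\psi(0)=0$ around $\lambda$, gives
\[
\lambda\psi'(\lambda)-\psi(\lambda)=\int_0^1 s\,\lambda^2\psi''(s\lambda)\,ds .
\]
Therefore
\[
\lambda^3\Lambda''(\lambda)=2\lambda^2\int_0^1 s\bigl(\psi''(\lambda)-\psi''(s\lambda)\bigr)\,ds .
\]
For $\lambda>0$ this is $\ge0$ as soon as $\psi''$ is nondecreasing on $[0,\lambda]$. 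For $\lambda<0$ the left side must be $\le0$, which again follows if $\psi''$ is nondecreasing on $[\lambda,0]$. Both cases therefore reduce to the single claim
\[
\psi'''(t)=\mathbb E_{\mu_t}\bigl[(F-\mathbb E_{\mu_t}F)^3\bigr]\ge0\qquad\text{for all }t\in\mathbb R,
\]
that is, $F$ has nonnegative skewness under every exponential tilt $\mu_t$.

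\emph{The skewness claim (main obstacle).} First I would reduce to one dimension. By Corollary~\ref{corollary: Ehrhard for convex functional}, $F(G)$ is Ehrhard-type. By Lemma~\ref{lemma: Ehrhard and convex Gaussian} and its proof, $F(G)\overset{d}{=}f(g)$ with $f$ finite, convex and nondecreasing and $g\sim\mathcal N(0,1)$. Since $\psi$ depends only on the law of $F(G)$, it suffices to treat $f(g)$ under the tilt $\nu_t(dx)\propto e^{tf(x)-x^2/2}dx$.

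Put $m=\mathbb E_{\nu_t}f$ and let $x_0$ satisfy $f(x_0)=m$; such a point exists by monotonicity and continuity of $f$. I would try to write
\[
\mathbb E_{\nu_t}\bigl[(f-m)^3\bigr]=\operatorname{Cov}_{\nu_t}\bigl(f,(f-m)^2\bigr)
\]
and exploit the following features: $f$ is increasing; $(f-m)^2$ is decreasing left of $x_0$ and increasing right of it; and convexity of $f$ makes the right branch grow faster than the left. Concretely, I would use Gaussian integration by parts. Take $\Psi$ with $\Psi'=(f-m)^3 e^{tf}$ (after approximating $f$ by smooth convex functions, with monotone convergence justifying the limit). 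This expresses the third moment as $\mathbb E_{\nu_t}\bigl[(f-m)^3\bigr]\propto\mathbb E\bigl[\Psi(g)\,g\bigr]$, and the task becomes showing that this expression has the correct sign. One route is to pair each point $x<x_0$ with a point $y>x_0$ carrying equal $\nu_t$-mass. Another is to check the extremal case where $f$ is piecewise linear with one kink, where everything is explicit, and then argue that a general convex $f$ can be approximated by such profiles in a way that preserves the sign.

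If this one-dimensional argument stalls, the fallback is the route of \cite{Chen2026_Ehrhard_convex1}: a Gaussian interpolation or Hamilton--Jacobi type identity for $\psi$ in $d$ dimensions, using $\nabla^2F\succeq0$ inside a Brascamp--Lieb/Helffer--Sjöstrand covariance representation for $\operatorname{Cov}_{\mu_t}\bigl(F,(F-m)^2\bigr)$. I expect establishing $\psi'''\ge0$ to be the only genuinely hard step; the remaining steps are routine calculus and integrability checks.
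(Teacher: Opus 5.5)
Your calculus is correct; equivalently $\Lambda''(\lambda)=\int_0^1 s^2\psi'''(s\lambda)\,ds$. The gap is the claim you reduce everything to: $\psi'''(t)\ge 0$ for all $t\in\mathbb R$ is false. It already fails for $d=1$ and a one-kink piecewise linear $F$, which is exactly the ``extremal case'' you planned to check.

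Take $F(x)=x+\eta\,x_+$ with $\eta>0$ small. By Cameron--Martin, $\psi(t)=\tfrac{t^2}{2}+\log\bigl(\Phi(-t)+e^{at+a^2/2}\Phi(t+a)\bigr)$ with $a=t\eta$. This is jointly analytic in $(t,\eta)$, and
\[
\psi(t)=\frac{t^2}{2}+\eta\,t\,u(t)+O(\eta^2),\qquad u(t):=\mathbb E\,(t+G)_+=t\Phi(t)+\frac{e^{-t^2/2}}{\sqrt{2\pi}} .
\]
Since $u''(t)=e^{-t^2/2}/\sqrt{2\pi}$ and $u'''(t)=-t\,e^{-t^2/2}/\sqrt{2\pi}$,
\[
\psi'''(t)=\eta\,\bigl(3u''(t)+t\,u'''(t)\bigr)+O(\eta^2)=\eta\,(3-t^2)\,\frac{e^{-t^2/2}}{\sqrt{2\pi}}+O(\eta^2).
\]
This is negative at $t=\pm 2$ for small $\eta$. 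So the tilted skewness fails on both sides of $0$, including $t<0$, which is the part that is genuinely new in \cite{Chen2026_Ehrhard_convex1}. Heuristically, for $t<0$ the reweighting $e^{t\eta G_+}$ thins the far right tail of $\mathcal N(t,1)$, and this outweighs the positive skew $3\eta e^{-t^2/2}/\sqrt{2\pi}$ created by the kink.

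An even more transparent example is $F(x)=x_+$. There $\mathbb E e^{tF(G)}=\tfrac12+e^{t^2/2}\Phi(t)$, and under $\mu_t$ the law of $F$ is roughly $(1-\epsilon_t)\mathcal N(t,1)+\epsilon_t\delta_0$ with $\epsilon_t\approx\tfrac12 e^{-t^2/2}$: a small atom far to the left of the bulk. One finds $\psi''(t)=1+\tfrac{t^2}{2}e^{-t^2/2}(1+o(1))$, which exceeds its limit $1$ for large $t$, so the tilted variance is not monotone.

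Consequently neither of your routes can close. The one-dimensional pairing argument and the Helffer--Sj\"ostrand/Brascamp--Lieb fallback both target $\operatorname{Cov}_{\mu_t}\bigl(F,(F-m)^2\bigr)\ge 0$, which is false; moreover, for $t>0$ the measure $\mu_t$ is in general not even log-concave. Convexity of $\Lambda$ only requires the $r^2$-weighted average of $\psi'''$ over $[0,\lambda]$ to be nonnegative, and $\psi'''$ genuinely must change sign. For $F=x_+$ one checks $\lambda^3\Lambda''(\lambda)=\int_0^\lambda r^2\psi'''(r)\,dr\to 0$ as $\lambda\to\infty$. If $\psi'''\ge0$ on $(0,\infty)$ this would force $\psi'''\equiv 0$ there, contradicting $\psi''(0)=\tfrac12-\tfrac1{2\pi}\neq 1=\lim_{t\to\infty}\psi''(t)$.

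What is needed is a global mechanism. On $[0,\infty)$ one example is the Bou\'e--Dupuis formula
\[
\Lambda(\lambda)=\sup_v\ \mathbb E\Bigl[F\Bigl(B_1+\lambda\int_0^1 v_s\,ds\Bigr)-\frac{\lambda}{2}\int_0^1|v_s|^2\,ds\Bigr],
\]
which is a supremum of functions convex in $\lambda$. For $\lambda<0$ the same manipulation yields an infimum of convex functions and proves nothing. Handling that side is the actual content of \cite{Chen2026_Ehrhard_convex1}, and of the Ehrhard-type argument in \cite{F2025_Gaussian_convex_inequality}; the paper itself does not reprove the theorem but quotes it. A complete proof needs an idea of this global kind, not a sign condition on tilted cumulants.
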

The convexity above was firstly obtained for all positive $\lambda$ in for example \cite{AC_2015_convex_positive_lambda} and \cite{Guerra_2022_convex_positive_lambda} and then extended to the whole $\lambda \in \mathbb R$ in \cite{Chen2026_Ehrhard_convex1}. As a corollary of Theorem \ref{thm: Chen, thm 1 on convexity}, Chen obtained an improved left tail compared with \eqref{eqEhrhard: F(G) left tail 1} for $F(G)$: Formally, $\frac{\td}{\td \lambda }\Lambda_{F(G)} (0) = \frac{1}{2}\var [F(G) ]$, and the left tail in \eqref{eqChen: Gaussian left tail} follows from the Markov inequality and the estimate for $\mathbb E [e^{\lambda F(G)}]$ with $\lambda <0$ obtained from the convexity of \eqref{eqChen: Phi function}. We refer to \cite{Chen2026_Ehrhard_convex1} for the rigorous proof.
\begin{theorem}[Theorem 3 in \cite{Chen2026_Ehrhard_convex1}]\label{thm: Chen, theorem 3, left tail}
	Assume $F$ is a convex function on $\mathbb R^d$ and $G$ is a standard finite-dimensional Gaussian vector. Assume $\mathbb E [F(G)^2]<\infty$, then 
	\begin{equation}\label{eqChen: Gaussian left tail}
		\mathbb P \left(
		F(G) - \mathbb E [F(G)] \le -t  
		\right)\le e^{-\frac{t^2}{2 \var ( F(G) )} },\quad \forall t\ge 0.
	\end{equation}
\end{theorem}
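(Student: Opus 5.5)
The plan is to derive \eqref{eqChen: Gaussian left tail} from the convexity of $\Lambda_{F(G)}$ in Theorem~\ref{thm: Chen, thm 1 on convexity} by a Chernoff argument with $\lambda<0$. Since Theorem~\ref{thm: Chen, thm 1 on convexity} requires $\mathbb E[e^{\lambda F(G)}]<\infty$ for every $\lambda>0$, while here only $\mathbb E[F(G)^2]<\infty$ is assumed, the first step is a reduction to a Lipschitz $F$.

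\textbf{Step 1 (reduction to Lipschitz $F$).} Assume first that $F$ is convex and Lipschitz. By Gaussian concentration, $F(G)$ then has finite exponential moments of all orders, both positive and negative. In the general case I use the inf-convolution
\[
F_n(x):=\inf_{y}\{F(y)+n|x-y|\}.
\]
Each $F_n$ is convex, since it is the inf-convolution of two convex functions, and it is $n$-Lipschitz. Moreover $F_n\uparrow F$ pointwise, since $F$ is convex and finite, hence continuous. Because $F_1\le F_n\le F$ and $F_1(G)$ has all moments, dominated convergence gives $\mathbb E[F_n(G)]\to\mathbb E[F(G)]$ and $\operatorname{Var}(F_n(G))\to\operatorname{Var}(F(G))$, using $\mathbb E[F(G)^2]<\infty$. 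Suppose the bound holds for each $F_n$. Then, at every $t$ for which the limiting probability is approached correctly (by Portmanteau applied to the almost sure convergence $F_n(G)-\mathbb E F_n(G)\to F(G)-\mathbb E F(G)$, using open sets and then left continuity in $t$), it passes to the limit.

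\textbf{Step 2 (sub-Gaussian bound on negative exponential moments).} Write $V:=\operatorname{Var}(F(G))$ for Lipschitz convex $F$. By Theorem~\ref{thm: Chen, thm 1 on convexity}, $\Lambda:=\Lambda_{F(G)}$ is convex on $\mathbb R$, so it has a left derivative $\Lambda'_-(0)$. For $\lambda<0$, monotonicity of difference quotients gives
\[
\frac{\Lambda(0)-\Lambda(\lambda)}{-\lambda}\le\Lambda'_-(0),
\qquad\text{that is,}\qquad
\Lambda(\lambda)\ge \mathbb E[F(G)]+\lambda\,\Lambda'_-(0).
\]
To identify $\Lambda'_-(0)$ I expand
\[
\log\mathbb E[e^{\lambda F(G)}]=\lambda\,\mathbb E[F(G)]+\tfrac{\lambda^2}{2}V+o(\lambda^2),\qquad \lambda\to0.
\]
This expansion is justified by differentiating under the expectation twice, which is allowed because $F(G)$ has exponential moments in a neighbourhood of $0$. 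It gives $\Lambda(\lambda)=\mathbb E[F(G)]+\frac{\lambda}{2}V+o(\lambda)$, hence $\Lambda'_-(0)=V/2$. Multiplying the supporting-line inequality by $\lambda<0$ reverses it:
\[
\log\mathbb E[e^{\lambda F(G)}]\le \lambda\,\mathbb E[F(G)]+\tfrac{\lambda^2}{2}V.
\]
With $s=-\lambda>0$ this reads
\[
\mathbb E\bigl[e^{-s(F(G)-\mathbb E F(G))}\bigr]\le e^{s^2V/2}.
\]

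\textbf{Step 3 (Chernoff bound).} By Markov's inequality,
\[
\mathbb P\bigl(F(G)-\mathbb E F(G)\le -t\bigr)\le e^{-st+s^2V/2}.
\]
Choosing $s=t/V$ gives $e^{-t^2/(2V)}$. If $V=0$, then $F(G)$ is almost surely constant and the claim is trivial.

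The main obstacle is Step 2: justifying $\Lambda'_-(0)=V/2$, which needs uniform integrability of $F(G)^2e^{\lambda F(G)}$ for small $|\lambda|$. This is exactly why the Lipschitz reduction of Step 1 is made first. The limit passage in Step 1 is routine once the monotone approximation $F_n\uparrow F$ is in place.
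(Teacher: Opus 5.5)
Your argument is correct and follows the route the paper sketches: convexity of $\Lambda_{F(G)}$ from Theorem~\ref{thm: Chen, thm 1 on convexity}, the identification $\Lambda_{F(G)}'(0)=\frac12\operatorname{Var}(F(G))$, the resulting bound on $\mathbb E[e^{\lambda F(G)}]$ for $\lambda<0$, and Markov's inequality; your Lipschitz reduction supplies the rigour that the paper defers to \cite{Chen2026_Ehrhard_convex1}. One small fix is needed: $F_n$ can be identically $-\infty$ for small $n$ (e.g.\ $F(y)=2y$ with $n=1$), so start the approximation at some $n_0$ at least the norm of a subgradient of $F$, and use $F_{n_0}$ in place of $F_1$ as the lower envelope in the dominated convergence step.
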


Recall that Lemma \ref{lemma: Ehrhard and convex Gaussian} gives the 'equivalence' of Gaussian-convex and Ehrhard-type random variables. We immediately get the following Ehrhard-type convexity inequality from Theorem \ref{thm: Chen, thm 1 on convexity}. We note that a different proof is given in Theorem 1 of \cite{F2025_Gaussian_convex_inequality} without using the result in \cite{Chen2026_Ehrhard_convex1}. 
\begin{theorem} \label{thm: Ehrhard-type convexity}
	Assume a random variable $X$ is Ehrhard-type. Then
	\begin{equation}\label{eqEhrhard: Lambda_X}
		\Lambda _X (p):= \begin{cases}
			\frac{\log \mathbb E [e^{p X}]}{p} & \text{ if } p \ne 0\\
			\mathbb E [ X] & \text{ if } p = 0
		\end{cases}
	\end{equation}
	is a $\mathbb R \cup \{ \infty\}$-valued convex function.
\end{theorem}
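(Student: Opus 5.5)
The plan is to reduce to Theorem~\ref{thm: Chen, thm 1 on convexity} via Lemma~\ref{lemma: Ehrhard and convex Gaussian}. By that lemma, $X$ has the same law as $f(g)$ for a finite-valued, non-decreasing convex $f:\mathbb R\to\mathbb R$ and a standard one-dimensional Gaussian $g$. Since $\Lambda_X$ depends only on the law of $X$, it suffices to show that $\lambda\mapsto \Lambda_{f(g)}(\lambda)$ is an $\mathbb R\cup\{\infty\}$-valued convex function. The same lemma gives $\mathbb E[X]>-\infty$ and $\mathbb E[e^{pX}]<\infty$ for $p<0$. Hence $\Lambda_X(p)\in\mathbb R$ for $p<0$, $\Lambda_X(0)\in\mathbb R\cup\{\infty\}$, and $\Lambda_X(p)\in\mathbb R\cup\{\infty\}$ for $p>0$ (it is $>-\infty$ by Jensen, since $\mathbb E[X]>-\infty$).

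The obstacle is that Theorem~\ref{thm: Chen, thm 1 on convexity} assumes $\mathbb E[e^{\lambda F(G)}]<\infty$ for \emph{all} $\lambda>0$, which may fail for $f(g)$ if $f$ grows superlinearly. I would handle this by truncation. Set $f_n(z):=\min\{f(z),\, f(n)+f'_+(n)(z-n)\}$ for $n\ge1$, that is, replace $f$ on $[n,\infty)$ by its tangent line at $n$.
\begin{itemize}
\item Because $f$ is convex, the tangent line lies below $f$ everywhere, so $f_n$ equals the maximum of the supporting lines of $f$ at points $\le n$. Hence $f_n$ is convex and finite.
\item $f_n$ grows at most linearly, so $\mathbb E[e^{\lambda f_n(g)}]<\infty$ for every $\lambda\in\mathbb R$.
\item $f_n\uparrow f$ pointwise as $n\to\infty$.
\end{itemize}
Theorem~\ref{thm: Chen, thm 1 on convexity} then shows that each $\Lambda_n:=\Lambda_{f_n(g)}$ is real-valued and convex on $\mathbb R$.

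Next, I would pass to the limit $n\to\infty$ pointwise in $\lambda$.
\begin{itemize}
\item For $\lambda>0$: monotone convergence gives $\mathbb E[e^{\lambda f_n(g)}]\uparrow \mathbb E[e^{\lambda f(g)}]$, so $\Lambda_n(\lambda)\uparrow\Lambda_X(\lambda)\in\mathbb R\cup\{\infty\}$.
\item For $\lambda<0$: $e^{\lambda f_n(g)}\downarrow e^{\lambda f(g)}$, and it is dominated by $e^{\lambda f_1(g)}$, which is integrable. Dominated convergence gives convergence of $\mathbb E[e^{\lambda f_n(g)}]$. Dividing by $\lambda<0$ shows that $\Lambda_n(\lambda)$ increases to $\Lambda_X(\lambda)$.
\item For $\lambda=0$: $\mathbb E[f_n(g)]\uparrow\mathbb E[f(g)]$ by monotone convergence, since $f_1(g)$ is integrable and $f_n\ge f_1$.
\end{itemize}
Thus $\Lambda_n\uparrow\Lambda_X$ pointwise on $\mathbb R$. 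A pointwise nondecreasing limit (a supremum) of convex functions is convex as an extended-valued function. This proves that $\Lambda_X$ is $\mathbb R\cup\{\infty\}$-valued and convex.

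The main point needing care is the truncation: checking that $f_n$ is convex, that it gives monotone convergence in the right direction for each sign of $\lambda$, and that an integrable dominating function exists for $\lambda\le 0$. All of these follow from the affine lower bound $f\ge az+b$ and the monotonicity $f_n\uparrow f$.
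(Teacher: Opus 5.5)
Your argument is essentially the paper's proof: you realize $X$ as $f(g)$ via Lemma~\ref{lemma: Ehrhard and convex Gaussian}, approximate $f$ from below by an increasing sequence of convex functions of linear growth, apply Theorem~\ref{thm: Chen, thm 1 on convexity} to each, and pass to the limit by monotone convergence for $\lambda\ge 0$ and dominated convergence (dominating function $e^{\lambda f_1(g)}$) for $\lambda<0$. The only difference is the approximating sequence: the paper uses $f_n=\max_{k\le n}\ell_k$ for countably many affine minorants $\ell_k$ with $\sup_k\ell_k=f$, which needs no monotonicity, while your right-tail truncation needs $f$ nondecreasing (true by the construction in that lemma) to get linear growth at $-\infty$.

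One slip to fix: as written, $\min\{f(z),\,f(n)+f'_+(n)(z-n)\}$ is identically the tangent line, since that line lies below $f$, and it does not increase to $f$. What you mean, consistent with your verbal description, is $f_n(z)=f(z\wedge n)+f'_+(n)(z-n)_+$.
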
 

\begin{proof}
	We first prove $\Lambda _X(p) >-\infty$ for any $p\in \mathbb R$.
	From definition \ref{def: Ehrhard-type random variables}, $X$ is not identically $-\infty$, thus $\Lambda _X (p)> -\infty$ for all $p>0$. By Lemma \ref{lemma: Ehrhard and convex Gaussian}, $\Lambda _X (p)> -\infty$ for all $p\le 0$. \par 
	Since $X$ is Ehrhard-type, by Lemma \ref{lemma: Ehrhard and convex Gaussian}, $X$ has the same law as $f(g)$ for $g$ a standard Gaussian random variable and $f$ a $\mathbb R$-valued convex function. Take affine functions $\ell_k\le f$ such that $f=\sup_k \ell_k$, and define $f_n:= \sup_{1\le k\le n} \ell_k$, which is globally Lipschitz and convex for each $n$. Especially, $\mathbb E e^{pf_n(g)}<\infty$ for every $p>0$. By Theorem \ref{thm: Chen, thm 1 on convexity},  
	\[
	\Lambda _n (p):= \begin{cases}
		\frac{\log \mathbb E [e^{p f_n(g) }]}{p} & \text{ if } p \ne 0\\
		\mathbb E [ f_n(g) ] & \text{ if } p = 0
	\end{cases}
	\]
	is $\mathbb R$-valued and convex. For $p> 0$, by the monotone convergence theorem, $\mathbb E [e^{pf_n(g)}] \uparrow \mathbb E [e^{pf(g) }]= \mathbb E [e^{pX}]$. For $p<0$, we have $e^{pf_1(g)} \ge e^{p f_n(g) }$ for all $n$. Then the dominated convergence theorem gives $\mathbb E [e^{pf_n(g)}] \downarrow \mathbb E [e^{ pf(g)}] = \mathbb E [e^{pX}]$. For $p=0$,
	again using monotone convergence and $f_n\uparrow f$, we get $\Lambda _n(0) \to \Lambda (0)$. Therefore, $\Lambda _X(p)$ is a pointwise limit of convex functions $\Lambda_n(p)$, so it is also convex. 
\end{proof}

As a direct corollary of Theorem \ref{thm: Ehrhard-type convexity}, we get the \emph{Ehrhard-type moment comparison inequality} in Theorem \ref{thm: intro, Ehrhard moment comparison ineq} which, together with Lemma \ref{lemma: Ehrhard type is close}, provides a general framework for the moments estimate of different models in this paper. For convenience of discussion, we rewrite Theorem \ref{thm: intro, Ehrhard moment comparison ineq} here: 
\begin{theorem}\label{thm: Ehrhard moment comparison ineq}
	Assume a real-valued random variable $X$ is Ehrhard-type with $M:=e^X$ satisfying $\mathbb E [M]=1$. Then 
	\begin{equation}\label{eqMC: moment comparison ineq}
		\frac{\log \mathbb E [M^p]}{(p-1) p} \le \frac{\log \mathbb E [M^q]}{(q-1) q},\quad \forall p<q,\ p,q\notin \{ 0,1\} 
	\end{equation} 
	and
	\begin{equation}\label{eqMC: E X two-side}
		- \frac{\log \mathbb E [ M^q]}{q(q-1)}
		\le 
		\mathbb E [X] 
		\le - \frac{\log \mathbb E [ M^p]}{p(p-1)},\quad \forall p<0,\ q>0.
	\end{equation} 
	In particular, 
	\begin{enumerate}
		\item When $p\in (-\infty ,0) \cup (1,2)$ and $q=2$,
		$
		\mathbb E [M^p] \le (\mathbb E [ M^2] )^{p(p-1)/2}.
		$
		\item When $p\in (0,1)$, $q=2$, 
		$
		\mathbb E [M^p] \ge (\mathbb E [ M^2] )^{p(p-1)/2}.
		$
		\item When $p=2$ and $q>2$
		$
		\mathbb E [M^q] \ge  (\mathbb E [ M^2] )^{q(q-1)/2}.
		$	
	\end{enumerate}
\end{theorem}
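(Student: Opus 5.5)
The plan is to derive everything from the convexity of $\Lambda_X$ in Theorem~\ref{thm: Ehrhard-type convexity}, together with the normalization $\mathbb E[M]=1$, which reads $\Lambda_X(1)=0$. For $\lambda\ne 0$ we have $\log\mathbb E[M^\lambda]=\lambda\Lambda_X(\lambda)$. Hence
\[
\frac{\log \mathbb E[M^\lambda]}{\lambda(\lambda-1)}=\frac{\Lambda_X(\lambda)-\Lambda_X(1)}{\lambda-1},\qquad \lambda\notin\{0,1\},
\]
which is the slope of the chord of $\Lambda_X$ between $1$ and $\lambda$. For a convex function with values in $\mathbb R\cup\{\infty\}$, the chord slope $\lambda\mapsto (\Lambda_X(\lambda)-\Lambda_X(1))/(\lambda-1)$ is nondecreasing on $\mathbb R\setminus\{1\}$. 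Taking $\lambda=p<q$ then gives \eqref{eqMC: moment comparison ineq}. The value $\lambda=0$ must be excluded only because the left-hand expression is undefined there, not because monotonicity fails there.

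For \eqref{eqMC: E X two-side}, I will use the same monotonicity with the intermediate point $\lambda=0$. There the chord slope equals $(\Lambda_X(0)-0)/(0-1)=-\mathbb E[X]$. For $p<0<q$ this yields
\[
\frac{\log\mathbb E[M^p]}{p(p-1)}\le -\mathbb E[X]\le \frac{\log\mathbb E[M^q]}{q(q-1)},
\]
and rearranging gives \eqref{eqMC: E X two-side}. The three particular cases then follow by specializing \eqref{eqMC: moment comparison ineq} and multiplying through by $p(p-1)$, keeping track of its sign.
\begin{enumerate}
\item For $p\in(-\infty,0)\cup(1,2)$ and $q=2$, we have $p(p-1)>0$, so the inequality is preserved and exponentiation gives the upper bound.
\item For $p\in(0,1)$ and $q=2$, we have $p(p-1)<0$, so the inequality reverses and gives the lower bound.
\item For $p=2<q$, multiplying by $q(q-1)>0$ gives the lower bound on $\mathbb E[M^q]$.
\end{enumerate}

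The main obstacle is the bookkeeping with infinite values. Theorem~\ref{thm: Ehrhard-type convexity} only guarantees $\Lambda_X>-\infty$, with $\Lambda_X$ convex on $\mathbb R$ and possibly $+\infty$ at some $\lambda>0$. I will use that its effective domain is an interval containing $(-\infty,1]$: negative $\lambda$ are covered by Lemma~\ref{lemma: Ehrhard and convex Gaussian}, $\lambda=1$ by assumption, and $\lambda\in(0,1)$ by Jensen. The chord-slope monotonicity of a convex function with values in $\mathbb R\cup\{\infty\}$ remains valid when the value $+\infty$ is allowed on the right. The inequality is trivial when the right-hand side is $+\infty$. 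If the left-hand side is $+\infty$ at some $p>1$, then all $q>p$ also give $+\infty$, so the claim holds with the convention $\infty\le\infty$. Finally, $\mathbb E[X]$ is finite: it is bounded below by Lemma~\ref{lemma: Ehrhard and convex Gaussian} and above by Jensen, $\mathbb E[X]\le\log\mathbb E[M]=0$. So the point $\lambda=0$ causes no difficulty.
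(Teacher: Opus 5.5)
Your proposal is correct and follows the paper's proof. The paper also derives both inequalities from the convexity of $\Lambda_X$ (Theorem~\ref{thm: Ehrhard-type convexity}) together with $\Lambda_X(1)=0$, via monotonicity of $r\mapsto \Lambda_X(r)/(r-1)$. Your additional bookkeeping — finiteness of $\Lambda_X$ on $(-\infty,1]$, the treatment of $+\infty$ values, and the evaluation at $\lambda=0$ for the bounds on $\mathbb E[X]$ — makes explicit what the paper's one-line argument leaves implicit.
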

\begin{proof}
	By the convexity of $\Lambda_X(\cdot)$ defined in \eqref{eqEhrhard: Lambda_X} and $\Lambda_X (1)= \log \mathbb E [e^X]=0$, we get $r\mapsto \Lambda_X(r)/(r-1)$ is non-decreasing and then \eqref{eqMC: moment comparison ineq} and \eqref{eqMC: E X two-side} follows directly. 
\end{proof}

\begin{remark}
	The condition $\mathbb E [e^X]=1$ in Theorem \ref{thm: Ehrhard moment comparison ineq} is only for convenience of statement. For the general case with $\mathbb E [e^X]\in (0,\infty)$, we can apply the same argument for the normalized $\bar X := \frac{X}{\log \mathbb E [e^X]}$. 
\end{remark}

\begin{remark}
It is pointed out in \cite{F2025_Gaussian_convex_inequality} that, the convexity of normalized log moment generating function \eqref{eqEhrhard: Lambda_X} is not restricted to convex functionals of Gaussian vectors. A typical example is shown in \cite{F2025_Gaussian_convex_inequality}: if $X$ is a Poisson random variable with exponent $a$, then $\frac{\log \mathbb E [e^{\lambda X}]}{\lambda}= \frac{a(e^\lambda -1)}{\lambda}$ is convex although $X$ is not convex-Gaussian. It would be interesting to extend the framework here to non-Gaussian cases with similar moment comparison inequalities. 
\end{remark}

\section{Preliminaries for DPRE and Critical 2D SHF}\label{section: preliminary, SHF}

Recall that we have defined the point-to-point partition function of $d-$dimensional DPRE model in \eqref{eqDPRE: Z_{m,n}^{beta,omega}}: Fix $\{\omega(t,x):t\in \mathbb Z_+,\ x\in \mathbb Z^d \}$,
for any $m,\ n\in \mathbb Z_+$, $\beta >0$, 
\begin{equation}\label{eqDPRE: Z_{m,n}^{beta,omega}, preliminary}
	\mathcal Z_{m,n}^{\beta,\omega}(x,y)
	:=
	\mathrm E\!\left[
	\exp\!\left\{
	\sum_{k=m+1}^{n-1}
	\left(
	\beta\omega(k,S_k)-\frac{\beta^2}{2}
	\right)
	\right\}
	\mathbf 1_{\{S_n=y\}}
	\,\middle|\, S_m=x
	\right]
\end{equation}
{where the above expectation is taken over the simple symmetric random walk $S$}. The following Lemma gives the convexity of its free energy w.r.t. the Gaussian disorder $\omega$. 

\begin{lemma}\label{lemma: convexity of Free energy}
For any fixed $m,\ n\in \mathbb Z_+$, $x,\ y\in \mathbb Z^2$, $\beta >0$, $\log \mathcal Z_{m,n}^{\beta,\cdot }(x,y) $ is a convex function. The same convexity holds for 
$
\log \sum_{i,j=1}^L c_{i,j} \mathcal Z_{m,n}^{\beta,\cdot }(x_i,y_j) 
$
with finite $L\in \mathbb Z_+$ and positive $\{ c_{i,j}\}$. 
\end{lemma}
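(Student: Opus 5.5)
The plan is to write the partition function as a finite positive combination of exponentials of affine functions of the disorder, and then use that log-sum-exp of affine functions is convex.

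Fix $m<n$ and $x,y$. The walk steps between times $m$ and $n$ are finite, so only finitely many paths $s=(s_{m+1},\dots,s_{n-1})$ have positive probability, namely those with $s_m=x$ and $s_n=y$. Hence only the finite Gaussian vector
\[
\omega_{m,n}:=\{\omega(k,z): m<k<n,\ |z-x|_1\le k-m\}
\]
enters. Expanding the expectation over paths gives
\[
\mathcal Z_{m,n}^{\beta,\omega}(x,y)=\sum_{s} \mathrm P(s)\, e^{-\beta^2(n-m-1)/2}\exp\Big\{\beta\sum_{k=m+1}^{n-1}\omega(k,s_k)\Big\}.
\]
Each exponent $\ell_s(\omega):=\beta\sum_k\omega(k,s_k)$ is a linear function of $\omega_{m,n}$, and the weights $a_s:=\mathrm P(s)e^{-\beta^2(n-m-1)/2}$ are positive.

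If no path connects $x$ to $y$, the sum is empty and $\log\mathcal Z\equiv-\infty$. This is trivially convex in the extended sense, and it is irrelevant once $f_1,f_2$ are chosen so that the total sum is nonzero. Otherwise, $\log\mathcal Z=\log\sum_s a_s e^{\ell_s(\omega)}$ is the composition of the function $\mathrm{LSE}(u)=\log\sum_s a_se^{u_s}$ with the linear map $\omega\mapsto(\ell_s(\omega))_s$. The function LSE is convex, which follows either from Hölder's inequality,
\[
\sum_s a_s e^{\lambda u_s+(1-\lambda)v_s}\le\Big(\sum_s a_se^{u_s}\Big)^\lambda\Big(\sum_s a_se^{v_s}\Big)^{1-\lambda},
\]
or from its Hessian being a covariance matrix under the Gibbs weights. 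Composing a convex function with a linear map preserves convexity.

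For the general statement, $\sum_{i,j}c_{i,j}\mathcal Z_{m,n}^{\beta,\omega}(x_i,y_j)$ is again a finite sum of terms of the form $c_{i,j}a_s e^{\ell_s(\omega)}$, indexed by triples $(i,j,s)$, with positive coefficients and linear exponents in a larger but still finite Gaussian vector. The same Hölder argument then applies verbatim.

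The only mild obstacle is bookkeeping. One must check that only finitely many disorder variables are involved, which is what makes $\log \mathcal Z$ a convex functional of a finite-dimensional Gaussian vector, as needed for Corollary~\ref{corollary: Ehrhard for convex functional}. This holds because the walk has bounded range over a finite time horizon and $L$ is finite. For $f_1,f_2$ with one of them compactly supported, the same reasoning applies, since only finitely many $(x,y)$ pairs are reachable.
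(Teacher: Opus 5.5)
Your proof is correct and follows the paper's route: the paper also applies H\"older's inequality to the expectation over walk paths to show that $\log\mathcal Z$ is convex along segments $\lambda\omega_0+(1-\lambda)\omega_1$. Your explicit check that only finitely many disorder variables enter, and your remark on the degenerate case $\mathcal Z\equiv 0$, supply bookkeeping the paper leaves implicit.
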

\begin{proof}
Rewrite the partition function \eqref{eqDPRE: Z_{m,n}^{beta,omega}, preliminary} as 
\[
\mathcal Z_{m,n}^{\beta,\omega_0} (x,y) = \text{E}_{(m,x)} \left[
e^{\sum_{(k,z)} \beta \omega_0 (k,z) -\frac{\beta^2}{2}} \mathds{1}_{S_k =z} \mathds{1}_{S_{n}= y}
\right] 
\]
for any deterministic $\omega_0: \mathbb Z^2 \to \mathbb R$.  
For any $\lambda \in (0,1)$ and another deterministic $\omega _1$, we have 
\[
\mathcal Z_{m,n}^{\beta,\lambda \omega_0 + (1-\lambda ) \omega _1} (x,y)
=
 \text{E}_{(m,x)} \left[
e^{\sum_{(k,z)} \beta \lambda  \omega_0 (k,z) -\lambda \frac{\beta^2}{2}
+ \beta  (1-\lambda ) \omega_1 (k,z) -(1-\lambda )  \frac{\beta^2}{2}
} \mathds{1}_{S_k =z} \mathds{1}_{S_{n}= y}
\right] 
\] 
By H\"{o}lder inequality, we get 
\[
\log \mathcal Z_{m,n}^{\beta,\lambda \omega_0 + (1-\lambda ) \omega _1} (x,y) \le \lambda \log \mathcal Z_{m,n}^{\beta,\lambda \omega_0 } (x,y) +(1-\lambda) \log \mathcal Z_{m,n}^{\beta,(1-\lambda ) \omega _1} (x,y).  
\]
The discussion for the linear combination with non-negative coefficients is the same and thus omitted. 
\end{proof}

By Lemma \ref{lemma: convexity of Free energy}, the free energy of DPRE is a convex functional of the Gaussian disorder and thus Ehrhard-type random variable by Corollary \ref{corollary: Ehrhard for convex functional}. Thus the moment comparison inequality in Theorem  
\ref{thm: DPRE, moment comparison} is an immediate result of Theorem \ref{thm: Ehrhard moment comparison ineq}.

We now turn to preliminaries for critical 2D SHF as the weak limit of the DPRE model at criticality. For a review of notations used here, we refer to Subsection \ref{subsection: main result}. Recall that the marginal partition function of DPRE
\begin{align}\label{eqPre: Z_N(f)}
\mathcal Z_{tN}^{\beta_N(\vartheta)}(f)
:=
\frac{1}{N}
\sum_{z\in\mathbb Z^2}
f\!\left(\frac{z}{\sqrt N}\right)
E_z\!\left[
\exp\left\{
\sum_{n=1}^{\lfloor tN\rfloor}
\left(
\beta_N(\vartheta)\omega(n,S_n)
-\frac{\beta_N(\vartheta)^2}{2}
\right)
\right\}
\right].
\end{align}
weakly converges to $\mathscr Z_t^\theta ( f)$.

The following lemma follows from Lemma \ref{lemma: convexity of Free energy}.
\begin{lemma}
	Fix $t>0$ and $\theta \in \mathbb R$. {Take an arbitrary} non-negative $f\in C_c(\mathbb R^2)$ or $f= c \mathds{1}_A$ for some compact $A\subset \mathbb R^2$. { and $N\in \mathbb Z_+$,} {For any $N\in \bbN$, } $X_N:= \log \cZ_{tN}^{\beta_N(\theta)} (f)$ is a convex function of some finite-dimensional Gaussian vectors satisfying $\mathbb E [e^{\lambda X_N} ]<\infty$ for every $\lambda >0$. 
\end{lemma}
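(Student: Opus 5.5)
The plan is to write $X_N$ explicitly as a finite log-sum-exp of affine functions of finitely many Gaussian variables, and then read off both convexity and the exponential moment bound from that form.

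\emph{Step 1: reduce to a finite Gaussian vector.} Since $f$ is compactly supported, $f(z/\sqrt N)\neq 0$ only for $z$ in a finite set $\Lambda_N\subset\mathbb Z^2$. Starting from such $z$, the walk over $\lfloor tN\rfloor$ steps only visits the finite space-time box
\[
B_N:=\{(n,x): 1\le n\le \lfloor tN\rfloor,\ |x-z|_\infty\le n \text{ for some } z\in\Lambda_N\}.
\]
Hence $\mathcal Z_{tN}^{\beta_N(\theta)}(f)$ depends only on the finite Gaussian vector $G:=(\omega(n,x))_{(n,x)\in B_N}$, which is standard Gaussian in $\mathbb R^{|B_N|}$.

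\emph{Step 2: convexity.} Expanding the random-walk expectation over the finitely many paths $\pi$ of length $\lfloor tN\rfloor$ started in $\Lambda_N$ gives
\[
\mathcal Z_{tN}^{\beta_N(\theta)}(f)=\sum_{\pi} c_\pi \exp\Big\{\sum_{n=1}^{\lfloor tN\rfloor}\big(\beta_N\omega(n,\pi_n)-\tfrac{\beta_N^2}{2}\big)\Big\},
\]
where $c_\pi=N^{-1}f(\pi_0/\sqrt N)\,4^{-\lfloor tN\rfloor}\ge0$. So $X_N$ is the log of a finite nonnegative combination of exponentials of affine functions of $G$. By H\"older's inequality this is convex, exactly as in Lemma~\ref{lemma: convexity of Free energy}. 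Alternatively one can write it as $\log\sum_{y}\sum_{z}c_z\mathcal Z_{0,\lfloor tN\rfloor+1}(z,y)$ (summing over the finitely many reachable endpoints $y$) and apply that lemma directly; the endpoint bookkeeping needs the extra step, which is harmless. The quantity is finite and real-valued because $f\ge0$ is not identically zero on the lattice for large $N$. If it vanished on the lattice, then $X_N\equiv-\infty$ and the statement is trivially interpreted, so I assume $\|f\|_{L^1}>0$ and $N$ large.

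\emph{Step 3: exponential moments.} For $\lambda>0$, the function $x\mapsto x^\lambda$ satisfies $(\sum_\pi a_\pi)^\lambda\le C_{\lambda,N}\sum_\pi a_\pi^\lambda$, with constant $C_{\lambda,N}=(\#\text{paths})^{(\lambda-1)_+}$. Each term satisfies
\[
\mathbb E\big[e^{\lambda\beta_N\sum_n\omega(n,\pi_n)}\big]=e^{\lambda^2\beta_N^2\lfloor tN\rfloor/2}<\infty.
\]
Therefore $\mathbb E[e^{\lambda X_N}]=\mathbb E[\mathcal Z_{tN}^{\beta_N(\theta)}(f)^\lambda]<\infty$.

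I expect no real obstacle here. The only care needed is in the finiteness of the support in Step~1, which is where compact support of $f$ (or compactness of $A$) is used, together with the endpoint bookkeeping in Step~2.
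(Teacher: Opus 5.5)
Your proof is correct and is essentially the paper's argument: the paper simply cites the H\"older-based convexity of Lemma~\ref{lemma: convexity of Free energy}, leaving implicit that compact support of $f$ makes the Gaussian vector finite-dimensional, and this is exactly your Steps~1--2. Your Step~3, the bound $(\sum_\pi a_\pi)^\lambda\le K^{(\lambda-1)_+}\sum_\pi a_\pi^\lambda$ over the finitely many paths, correctly supplies the finiteness of $\mathbb E[e^{\lambda X_N}]$, which the paper leaves unstated; your caveat about the degenerate case where $f$ vanishes on $N^{-1/2}\mathbb Z^2$ (so $X_N\equiv-\infty$) is a legitimate fine point that the statement's ``for any $N$'' glosses over.
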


We cite the following weak convergence result from \cite{CSZ2023_2dcSHF} and moment convergence from \cite{CSZ2019_Dickman} \cite{CSZ2019_thirdmoment}. 
\begin{theorem}
For any non-negative and integrable $f\in C(\mathbb R^2)$ or $f=c \mathds{1}_A$ for some compact $A\subset \mathbb R^2$, $\cZ_{tN}^{\beta_N(\theta)} (f)$ weakly converges to $\mathscr{Z}_t^\theta (f)$. Moreover,  
\begin{equation}\label{eqSHF: moments limit from Z_N, h =1,2}
	\mathbb E [\cZ_{tN}^{\beta_N(\theta)} (f)^h] \to \mathbb E [\mathscr Z_t^\theta (f)^h],\quad \text{ for }h=1,2,3.
\end{equation}
Especially, $\mathbb E [\mathscr Z_t^\theta (f)]= \| f\|_{L^1(\mathbb R^2)}$. 
\end{theorem}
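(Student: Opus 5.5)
The plan is to assemble the statement from the cited inputs rather than re-derive the construction of the flow, keeping the extra work to the approximation of indicator test functions and the passage from weak convergence to convergence of moments. \emph{Weak convergence.} For $f\in C_c(\mathbb R^2)$ non-negative, this is the main theorem of \cite{CSZ2023_2dcSHF}: the vague convergence of the rescaled measures \eqref{eq:rescaled-partition-measure}, integrated against $\mathbf 1\otimes f$. For a non-negative integrable $f\in C(\mathbb R^2)$ that is not compactly supported, and for $f=c\mathbf 1_A$, I will use an $L^1$-approximation argument. Take $f_k\in C_c$ with $\|f-f_k\|_{L^1}\to0$; for $f=c\mathbf 1_A$ one can choose $f_k\downarrow c\mathbf 1_A$ with support in a fixed compact set. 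Since both $\mathcal Z_{tN}^{\beta_N(\theta)}(\cdot)$ and $\mathscr Z_t^\theta(\cdot)$ are linear and non-negative with first moment equal to the Riemann sum, respectively the integral, of the test function, we get
\[
\mathbb E\bigl|\mathcal Z_{tN}^{\beta_N(\theta)}(f)-\mathcal Z_{tN}^{\beta_N(\theta)}(f_k)\bigr|\le \frac1N\sum_z |f-f_k|\Bigl(\frac{z}{\sqrt N}\Bigr).
\]
The right-hand side tends to $\|f-f_k\|_{L^1}$ as $N\to\infty$, at least for Riemann-integrable $f$ such as indicators of Jordan-measurable sets; for general compact $A$ I would instead use an $L^2$ bound via the second moment estimate, which is uniform in $N$. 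A standard $\epsilon/3$ argument (Billingsley, Theorem 3.2) then transfers weak convergence from $f_k$ to $f$.

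\emph{First moment.} The normalization in \eqref{eqPre: Z_N(f)} makes each exponential weight have mean one, so
\[
\mathbb E\bigl[\mathcal Z_{tN}^{\beta_N(\theta)}(f)\bigr]=\frac1N\sum_{z}f\Bigl(\frac{z}{\sqrt N}\Bigr)\longrightarrow\|f\|_{L^1(\mathbb R^2)}.
\]
Once moment convergence for $h=1$ is established, this gives $\mathbb E[\mathscr Z_t^\theta(f)]=\|f\|_{L^1}$.

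\emph{Moment convergence.} For $h=2,3$, the polynomial chaos expansion of the partition function turns $\mathbb E[\mathcal Z_{tN}^{\beta_N(\theta)}(f)^h]$ into a sum over collision diagrams of $h$ random walks. The limits are computed explicitly in \cite{CSZ2019_Dickman} for $h=2$ (via the Dickman subordinator) and in \cite{CSZ2019_thirdmoment} for $h=3$; in particular these moments are bounded uniformly in $N$. To identify the limits with the moments of the SHF, I need uniform integrability of $\mathcal Z_{tN}^{\beta_N(\theta)}(f)^h$. For $h=1,2$ this follows from the uniform bound on the third moment. For $h=3$ I would cite the uniform bound on the fourth (indeed on all integer) moments from \cite{GQT_2021_moments}, or from \cite{CSZ2023_2dcSHF}, where such bounds are used to prove tightness. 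Weak convergence together with uniform integrability then yields \eqref{eqSHF: moments limit from Z_N, h =1,2}.

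\emph{Main obstacle.} The delicate step is the $h=3$ case for indicator test functions $f=c\mathbf 1_A$, where one has to check that the cited uniform higher-moment bounds apply to this $f$. I expect this to reduce to bounding $c\mathbf 1_A$ by a continuous compactly supported function: by non-negativity of the partition function this gives $\mathcal Z_{tN}^{\beta_N(\theta)}(c\mathbf 1_A)\le\mathcal Z_{tN}^{\beta_N(\theta)}(\tilde f)$ for some $\tilde f\in C_c$, so moment bounds for $\tilde f$ dominate those for $c\mathbf 1_A$.
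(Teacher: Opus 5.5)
The part of your plan that goes beyond Riemann-integrable test functions does not work, and no other argument can rescue it. With the point-evaluation discretization \eqref{eqPre: Z_N(f)} the statement is false in that generality.

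Your fallback for a general compact $A$, an $L^2$ bound uniform in $N$, fails because the obstruction is not the size of $f-f_k$. It is that the lattice sum does not see Lebesgue measure. Let $D:=\bigcup_{N\ge1}N^{-1/2}\mathbb Z^2$, which is countable, choose an open $U\supset D$ with $|U|<1/2$, and set $A:=[0,1]^2\setminus U$. Then $A$ is compact with $|A|>1/2$, yet $\mathbf 1_A(z/\sqrt N)=0$ for all $z$ and $N$. Hence $\mathcal Z_{tN}^{\beta_N(\theta)}(c\mathbf 1_A)\equiv0$ for every $N$, while $\mathscr Z_t^\theta(c\mathbf 1_A)$ has mean $c|A|>0$. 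In particular $\mathbb E[\mathcal Z_{tN}^{\beta_N(\theta)}(f_k-c\mathbf 1_A)^2]\ge(\frac1N\sum_z f_k(z/\sqrt N))^2\to\|f_k\|_{L^1}^2\ge c^2|A|^2$, so no second-moment estimate closes the gap.

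The same problem affects continuous integrable $f$ that are not compactly supported, where you need direct Riemann integrability. Take continuous bumps of height $1$ and radius $e^{-k}$ centred at $(k,0)$, $k\in\mathbb N$. This gives a continuous integrable $f$ with $\frac1N\sum_z f(z/\sqrt N)=\infty$ whenever $N$ is a perfect square, so $\mathbb E[\mathcal Z_{tN}^{\beta_N(\theta)}(f)]=\infty$ along that subsequence. The hypotheses must therefore be restricted. Suitable classes are $f\in C_c$, continuous $f$ dominated by a radially decreasing integrable function, and $f=c\mathbf 1_A$ with $|\partial A|=0$. These still cover every use in the paper ($C_c$ test functions, $\mathcal U_\epsilon$, $g_\epsilon$), and under them your $L^1$ Riemann-sum argument is correct.

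Within that restricted class your argument is sound, and it fills in a step the paper skips. The paper gives no proof: it cites \cite{CSZ2023_2dcSHF} for weak convergence and \cite{CSZ2019_Dickman,CSZ2019_thirdmoment} for the moments. Those two papers compute the limits of the discrete moments, but identifying these limits with the moments of $\mathscr Z_t^\theta$ needs uniform integrability. Your route supplies exactly that: weak convergence, a uniform bound on a higher moment, and domination of $c\mathbf 1_A$ by some $\tilde f\in C_c$. It also gives convergence of every integer moment, not only $h\le3$.

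Two small corrections:
\begin{enumerate}
\item Take the uniform fourth-moment bound for the polymer from the higher-moment bounds in \cite{CSZ2023_2dcSHF}. \cite{GQT_2021_moments} treats the mollified SHE, not the DPRE.
\item The marginal integrates $\mathbf 1$ in the other variable, which is not compactly supported. Vague convergence must therefore be combined with a cutoff $\chi_R$ whose error is controlled uniformly in $N$ by the first moment.
\end{enumerate}
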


Next, we introduce some results on the second moment of the SHF here. For any $0\le t_1<t_2$, we have 
\[
\mathbb E [ \mathscr{Z}_{t_1,t_2} (\td x,\td y)] = 
\frac{1}{2}g_{\frac{t_2-t_1}{2}}(y-x) \td y\td x
\]
and
\begin{equation}\label{eqSHF: second moment formula}
	\begin{aligned}
\cov \left[
\mathscr{Z}_{t_1,t_2}(\td x,\td y), \mathscr{Z}_{t_1,t_2}(\td x',\td y')
\right] 
& = 
 { \, \dd x\, \dd y\, \dd x'\, \dd y'\, }
\frac{\pi}{2} g_{\frac{t}{4}} (\frac{y+y'}{2}- \frac{x+x'}{2} ) \\
& \quad \times \int_{t_1<a<b<t_2} g_{a-t_1} (x'-x) G_\theta (b-a) g_{t_2-b} (y'-y) \td a \td b 
	\end{aligned}
\end{equation}
with 
\[
G_\theta (t) := \int_0^{\infty} \frac{e^{(\theta -\gamma_{ \text{E}}) s} s t^{s-1}}{\Gamma(s+1) }
\dd s
\]
where $\gamma_\text{E} := -\int_0^\infty e^{-u} \log u \td u\approx 0.577...$ is the Euler constant, and $\Gamma(s+1):= \int_0^\infty t^{s} e^{-t} \td t$ is the Gamma function. We use the two standard estimates for $G_\theta$ recorded in
\cite[Section~2]{LZ2024_micromomentSHF}.  If
$q>e^{\theta-\gamma_{\mathrm E}}$, then
\begin{equation}\label{eq:intmom-G-Laplace}
	\int_0^\infty e^{-qt}G_\theta(t)\,\mathrm{d}t
	=\frac{1}{\log q-\theta+\gamma_{\mathrm E}},
\end{equation}
and, as $t\downarrow0$,
\begin{equation}\label{eqSHF: G_theta (t small)}
	G_\theta(t)
	=\frac{1}{t(\log t^{-1})^2}
	\left(1+\frac{2\theta}{\log t^{-1}}
	+O_\theta\!\left(\frac{1}{(\log t^{-1})^2}\right)\right).
\end{equation}
In particular, there exist $t_0\in(0,e^{-4})$ and $C_G<\infty$ such that
\begin{equation}\label{eqSHF: G_theta upper}
	G_\theta(t)
	\leq \frac{1}{t(\log t^{-1})^2}
	+\frac{C_G}{t(\log t^{-1})^3},
	\qquad 0<t\leq t_0.
\end{equation}
Fix $\lambda_0>e^{\theta-\gamma_{\mathrm E}}$.  For
$\lambda\geq\lambda_0$ and $w>0$, define the weighted Laplace transform
\begin{equation}\label{eqPre: F_lambda(omega)} 
	F_\lambda(w)
	:=\int_0^\infty
	\frac{e^{-\lambda v}G_\theta(v)}{w+v/2}\,\mathrm{d}v.
\end{equation}
{$F_\gl$} is positive and nonincreasing. By $\frac{1}{w+v/2} = \int_0^\infty e^{-\sigma(w+v/2)} \td \sigma$ and  \eqref{eq:intmom-G-Laplace}, we have
\begin{equation}\label{eqPre: F_lambda 2}
	F_\lambda(w)
	=\int_0^\infty
	\frac{e^{-\sigma w}}
	{\log(\lambda+\sigma/2)-\theta+\gamma_{\mathrm E}}
	\,\mathrm{d}\sigma.
\end{equation}

\begin{lemma}\label{lem:intmom-F-bound}
	There exist $w_0>0$ small enough and $C_F<\infty$, depending on $\theta$ and
	$\lambda_0$, such that, uniformly in $\lambda\geq\lambda_0$,
	\begin{equation}\label{eq:intmom-F-sharp}
		F_\lambda(w)
		\leq \frac{1}{w\log\frac{1}{w}}
		+\frac{C_F}{w(\log\frac{1}{w})^2},
		\qquad 0<w\leq w_0.
	\end{equation}
	Moreover, for every $w>0$,
	\begin{equation}\label{eq:intmom-F-regular}
		F_\lambda(w)
		\leq\frac{1}{w(\log\lambda-\theta+\gamma_{\mathrm E})}.
	\end{equation}
\end{lemma}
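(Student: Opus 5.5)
The second bound is immediate from the representation \eqref{eqPre: F_lambda 2}, and the sharp small-$w$ bound comes from the same representation by splitting the $\sigma$-integral at a large threshold.

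\textbf{Regular bound \eqref{eq:intmom-F-regular}.} Since $\log(\lambda+\sigma/2)\ge\log\lambda$ for $\sigma\ge0$, the denominator in \eqref{eqPre: F_lambda 2} is at least $\log\lambda-\theta+\gamma_{\mathrm E}$. This quantity is positive because $\lambda\ge\lambda_0>e^{\theta-\gamma_{\mathrm E}}$. Integrating $e^{-\sigma w}$ over $\sigma\in(0,\infty)$ gives $1/w$, which yields \eqref{eq:intmom-F-regular} for every $w>0$.

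\textbf{Sharp bound \eqref{eq:intmom-F-sharp}.} The integrand is monotone decreasing in $\lambda$, so it suffices to take $\lambda=\lambda_0$; this gives the uniformity in $\lambda\ge\lambda_0$. Set $a:=\gamma_{\mathrm E}-\theta$ and $L:=\log(1/w)$. Split the $\sigma$-integral at $\sigma_1:=1/(wL^2)$.

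On $[0,\sigma_1]$, bound the denominator below by $c_0:=\log\lambda_0+a>0$ and $e^{-\sigma w}$ by $1$. This piece contributes at most $\sigma_1/c_0=1/(c_0 wL^2)$, which is of the admissible error size.

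On $[\sigma_1,\infty)$, use $\log(\lambda_0+\sigma/2)\ge\log(\sigma/2)$, so that, for $w$ small enough that $\log(\sigma_1/2)+a>0$,
\[
\int_{\sigma_1}^\infty\frac{e^{-\sigma w}\,\dd\sigma}{\log(\sigma/2)+a}.
\]
Substituting $\sigma=u/w$ turns this into
\[
\frac1w\int_{1/L^2}^\infty\frac{e^{-u}\,\dd u}{L+\log u-\log 2+a}.
\]

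\textbf{Main step: the denominator expansion.} Write the denominator as $L\bigl(1+\frac{\log u+b}{L}\bigr)$ with $b:=a-\log 2$. On $u\ge 1/L^2$ we have $\log u\ge-2\log L$, so $1+(\log u+b)/L\ge 1/2$ for $L$ large. Then
\[
\frac{1}{L+\log u+b}\le\frac1L+\frac{(\log u+b)_-}{L^2}\cdot 2,
\]
using $\frac{1}{1+x}\le 1-x+2x^2$, or more crudely $\frac1{1+x}\le 1+2|x|$ for $x\ge-1/2$. In the crude form this gives
\[
\frac{1}{L+\log u+b}\le\frac1L+\frac{2|\log u+b|}{L^2}.
\]
Since $\int_0^\infty e^{-u}|\log u+b|\,\dd u<\infty$, the second piece contributes at most $C/(wL^2)$. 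The first piece contributes at most $\frac1{wL}\int_0^\infty e^{-u}\,\dd u=\frac{1}{wL}$.

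Summing the two ranges gives $F_\lambda(w)\le\frac1{wL}+\frac{C_F}{wL^2}$ for $w\le w_0$, with $w_0$ chosen so that $L$ is large enough for the above conditions to hold. This is \eqref{eq:intmom-F-sharp}.

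The only delicate point is the lower cutoff $\sigma_1$. It must be small enough that the crude piece on $[0,\sigma_1]$ is $O(1/(wL^2))$, yet large enough that the expansion of the denominator on $[\sigma_1,\infty)$ stays uniformly controlled. The choice $\sigma_1=1/(wL^2)$ achieves both, since the $\log u$ term then grows only like $\log L\ll L$.
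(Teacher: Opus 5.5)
Your proof is correct, and for the sharp bound \eqref{eq:intmom-F-sharp} it follows a different route from the paper.

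\textbf{The paper's route.} The paper works in the $v$-variable of the definition of $F_\lambda$ and splits at $v=w$ and $v=t_0$. It then uses the pointwise small-time bound \eqref{eqSHF: G_theta upper} on $G_\theta$:
\begin{itemize}
\item the leading term $\frac{1}{w\log(1/w)}$ is exactly $\frac1w\int_0^w\frac{\mathrm{d}v}{v(\log v^{-1})^2}$;
\item the range $(w,t_0)$ contributes $O\bigl(\frac{1}{w(\log 1/w)^2}\bigr)$ via $\frac{1}{w+v/2}\le\frac2v$;
\item the tail $v>t_0$ is bounded by a constant.
\end{itemize}
Only \eqref{eq:intmom-F-regular} is read off from \eqref{eqPre: F_lambda 2}, exactly as you do.

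\textbf{Your route.} You stay on the Laplace side throughout. After the rescaling $\sigma=u/w$ the denominator becomes $L+\log u+b$. The leading term $\frac{1}{wL}$ comes from $\int_0^\infty e^{-u}\,\mathrm{d}u=1$, and the correction comes from the integrability of $e^{-u}|\log u|$. The cutoff $\sigma_1=1/(wL^2)$ removes, at cost $O(1/(wL^2))$, the range where the denominator is not of order $L$.

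\textbf{Comparison.} Your argument uses only the exact identity \eqref{eq:intmom-G-Laplace} (through \eqref{eqPre: F_lambda 2}). It does not need the asymptotic expansion \eqref{eqSHF: G_theta (t small)} of $G_\theta$. It also makes uniformity in $\lambda\ge\lambda_0$ a one-line monotonicity observation. The paper's argument ties the leading coefficient directly to the small-time singularity of $G_\theta$, which the paper needs anyway elsewhere. Both give leading coefficient exactly $1$, which the supersolution construction in Lemma~\ref{lemma: H(x) super solution} requires.

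\textbf{A cosmetic point.} Your first display, with $(\log u+b)_-$, does not follow from $\frac{1}{1+x}\le 1-x+2x^2$ as you state: that right-hand side exceeds $1$ once $x>\frac12$. It does follow from $\frac{1}{1+x}\le 1+2x_-$ for $x\ge-\frac12$. In any case, the crude form you actually use is correctly justified.
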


\begin{proof}
	Use the definition \eqref{eqPre: F_lambda(omega)} and split the $v$-integral
	into $(0,w)$, $(w,t_0)$, and $(t_0,\infty)$.  On $(0,w)$,
	\eqref{eqSHF: G_theta upper} gives
	\begin{align}
		\int_0^w\frac{e^{-\lambda v}G_\theta(v)}{w+v/2}\,\mathrm{d}v
		&\leq\frac1w\int_0^w
		\left(\frac{1}{v(\log v^{-1})^2}
		+\frac{C_G}{v(\log v^{-1})^3}\right)\,\mathrm{d}v \notag\\
		&\leq\frac{1}{w\log\frac{1}{w}}
		+\frac{C}{w(\log\frac{1}{w})^2}.
		\label{eq:intmom-F-first}
	\end{align}
	On $(w,t_0)$, we have
	\begin{equation}\label{eq:intmom-F-second}
		\int_w^{t_0}\frac{e^{-\lambda v}G_\theta(v)}{w+v/2}\,\mathrm{d}v
		\leq C\int_w^{t_0}
		\left(\frac{1}{v^2(\log v^{-1})^2}
		+\frac{1}{v^2(\log v^{-1})^3}\right)\,\mathrm{d}v
		\leq\frac{C}{w(\log\frac{1}{w})^2}.
	\end{equation}
	The last integral is bounded {by}:
	\[
	\int_{t_0}^\infty
	\frac{e^{-\lambda v}G_\theta(v)}{w+v/2}\,\mathrm{d}v
	\leq\frac{2}{t_0}\int_{t_0}^\infty
	e^{-\lambda_0v}G_\theta(v)\,\mathrm{d}v<{C}
	\]
	{for some $C>0$, uniformly over $w\in (0,t_0]$ and $\gl\ge \gl_0$. }After decreasing $w_0$, this bounded contribution is absorbed by the second term in \eqref{eq:intmom-F-sharp}.  This proves
	\eqref{eq:intmom-F-sharp}.  Finally, \eqref{eqPre: F_lambda 2} gives
	\[
	F_\lambda(w)
	\leq\frac{1}{\log\lambda-\theta+\gamma_{\mathrm E}}
	\int_0^\infty e^{-\sigma w}\,\mathrm{d}\sigma,
	\]
	which is \eqref{eq:intmom-F-regular}.
\end{proof}

We also record the local second moment estimate here (refer to \cite{CSZ2019_thirdmoment}): For any fixed $t>0$ and $\epsilon \in \mathbb R$, there exists $C:=C(t,\theta)\ge 1$ such that,
\begin{equation}\label{eqSHF: local second moments}
	C^{-1} \log \frac{1}{\epsilon }  	\le \mathbb E [ {\mathscr Z_t^\theta} (g_\epsilon ) ^2 ]\le C \log \frac{1}{\epsilon }  
\end{equation}
for all $\epsilon$ small enough. The same estimate holds when we replace $g_\epsilon$ by $\epsilon^{-2} \mathds{1}_{B(0,\epsilon)}$. 

To end this section, we recall the flow property of the SHF proved in \cite{ClarkMian2026_SHFflow}. It will only be used in Subsection \ref{subsection: Local negative moments lower bound}. For fixed $t_1<t_2<t_3$ and $\epsilon>0$, define
\begin{equation}\label{eq:flow-property}
	\mathscr Z_{t_1,t_2}^\theta
	\bullet_\epsilon
	\mathscr Z_{t_2,t_3}^\theta (\td x,\td y)
	:=
	\int_{\mathbb R^2\times\mathbb R^2}
	\mathscr Z_{t_1,t_2}^\theta(\td x,\td z_1)
	g_\epsilon(z_1-z_2)
	\mathscr Z_{t_2,t_3}^\theta(\td z_2,\td y).
\end{equation}
By \cite[Proposition 2.6]{ClarkMian2026_SHFflow},
$
\mathscr Z_{t_1,t_2}^\theta
\bullet_\epsilon
\mathscr Z_{t_2,t_3}^\theta
\longrightarrow
\mathscr Z_{t_1,t_3}^\theta=: \mathscr Z_{t_1,t_2}^\theta
\bullet
\mathscr Z_{t_2,t_3}^\theta
$
vaguely in $L^2(\mathbb P)$ as $\epsilon\downarrow0$. 
Moreover, by \cite[Remark 2.8]{ClarkMian2026_SHFflow},
the convergence holds in $L^2(\mathbb P)$ after integration against a larger class of test functions with suitable exponential decay. Especially, we have $\mathscr Z_{t_1,t_2}^\theta
\bullet_\epsilon
\mathscr Z_{t_2,t_3}^\theta(g_\epsilon, \mathbf 1)$ converges in $L^2$ to $\mathscr Z_{t_1,t_3}^\theta (g_\epsilon, \mathbf 1)$. 

Iterating the above relation, $\mathscr Z_{s,t}^\theta$ can be reconstructed from the time-blocks
\begin{equation}\label{eq: flow, indepedent segments}
\mathscr Z_{t_0,t_1}^\theta,...,
\mathscr Z_{t_{n-1},t_n}^\theta
\end{equation}
for any partition
$s=t_0<t_1<\cdots<t_n=t$, i.e. $\mathscr Z_{s,t}^\theta = \mathscr Z_{t_0,t_1}^\theta \bullet ... \bullet \mathscr Z_{t_{n-1},t_n}^\theta$. The random measures in \eqref{eq: flow, indepedent segments} are mutually independent \cite[Proposition 2.2(iii)]{ClarkMian2026_SHFflow}. The flow property also yields the following martingale property: 
\begin{lemma}\label{lemma: MG of SHF}
	Fix $t_1>0$ and $\theta\in\mathbb R$, and let
	$
	\mathcal F_t:=\mathscr F_{[0,t]},$ with $t\ge t_1,
	$ 
	be the natural filtration of the SHF. Then, for every $f\in L^1(\mathbb R^2)\cap C(\mathbb R^2 )$,
	\[
	M_t:=\mathscr Z_{t_1,t}^\theta(f,\mathbf 1),
	\qquad t>t_1,
	\]
	is an $(\mathcal F_t)$-martingale.
\end{lemma}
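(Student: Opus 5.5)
We need to check three things for $M_t$: adaptedness, integrability, and the conditional expectation identity $\mathbb E[M_t\mid\mathcal F_s]=M_s$ for $t_1<s<t$.

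\emph{Adaptedness and integrability.} $M_t$ is $\mathscr F_{[t_1,t]}\subset\mathcal F_t$-measurable, so it is adapted. Integrability follows from $\mathbb E[\mathscr Z_{t_1,t}^\theta(\td x,\td y)]=\frac12 g_{\frac{t-t_1}{2}}(y-x)\,\td x\,\td y$. Integrating in $y$ against $\mathbf 1$ gives
\[
\mathbb E[M_t]=\tfrac12\|f\|_{L^1}<\infty .
\]
By Tonelli, the same computation holds for $|f|$. Strictly, the test function $\mathbf 1$ is not compactly supported, so we approximate it by $\mathbf 1_{B(0,R)}$ and use monotone convergence; splitting $f=f_+-f_-$ reduces to $f\ge 0$.

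\emph{Martingale identity.} Fix $t_1<s<t$ and use the flow property in its mollified form. Set
\[
M^{\delta}_{s,t}:=\mathscr Z_{t_1,s}^\theta\bullet_\delta\mathscr Z_{s,t}^\theta(f,\mathbf 1)=\int f(x)\,\mathscr Z_{t_1,s}^\theta(\td x,\td z_1)\,g_\delta(z_1-z_2)\,\mathscr Z_{s,t}^\theta(\td z_2,\mathbf 1).
\]
Here $\mathscr Z_{t_1,s}^\theta$ is $\mathcal F_s$-measurable, while $\mathscr Z_{s,t}^\theta$ is independent of $\mathcal F_s=\mathscr F_{[0,s]}$ by the independence of disjoint time blocks. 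Conditioning on $\mathcal F_s$ and using Fubini therefore gives
\[
\mathbb E[M^{\delta}_{s,t}\mid\mathcal F_s]=\int f(x)\,\mathscr Z_{t_1,s}^\theta(\td x,\td z_1)\int g_\delta(z_1-z_2)\,\mathbb E[\mathscr Z_{s,t}^\theta(\td z_2,\mathbf 1)].
\]
By translation invariance and the mean formula, $\mathbb E[\mathscr Z_{s,t}^\theta(\td z_2,\mathbf 1)]=c\,\td z_2$ with a constant $c$. With the paper's normalization, the marginal convention $\mathbb E[\mathscr Z_t(f)]=\|f\|_1$ makes $c=1$; in any case $c$ must be $1$ for the flow identity to be consistent with the first-moment formula, and this should be checked against the paper's conventions. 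Since $\int g_\delta=1$, we obtain
\[
\mathbb E[M^{\delta}_{s,t}\mid\mathcal F_s]=\mathscr Z_{t_1,s}^\theta(f,\mathbf 1)=M_s .
\]

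\emph{Passing to the limit.} By \cite[Proposition 2.6, Remark 2.8]{ClarkMian2026_SHFflow}, $M^\delta_{s,t}\to M_t$ in $L^2(\mathbb P)$ as $\delta\downarrow0$, for test functions in the admissible class. Conditional expectation is $L^2$-continuous, so $\mathbb E[M_t\mid\mathcal F_s]=M_s$.

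\emph{Main obstacle.} The delicate step is justifying this $L^2$ convergence for a general $f\in L^1\cap C$ paired with the non-integrable $\mathbf 1$, since Remark 2.8 covers test functions with suitable exponential decay. I would first prove the identity for $f=g_\epsilon$ (or for compactly supported continuous $f$), where the remark applies. For the $\mathbf 1$ side I would use $\mathbf 1_{B(0,R)}$ or Gaussian weights $e^{-|y|^2/R}$ and let $R\to\infty$ by monotone convergence in $L^1$.

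Then I would extend to general $f\in L^1\cap C$ by $L^1(\mathbb P)$-approximation. The linear map $f\mapsto M_t(f)$ satisfies
\[
\mathbb E|M_t(f)-M_t(\tilde f)|\le \tfrac12\|f-\tilde f\|_{L^1},
\]
so the martingale identity passes to $L^1$ limits, because conditional expectation is an $L^1$ contraction.
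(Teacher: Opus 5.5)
Your proposal is correct and follows essentially the same route as the paper: you condition the mollified flow $\mathscr Z_{t_1,s}^\theta\bullet_\delta\mathscr Z_{s,t}^\theta(f,\mathbf 1)$ on $\mathcal F_s$ using independence of $\mathscr Z_{s,t}^\theta$ and the first-moment formula, pass $\delta\downarrow0$ via the $L^2$ convergence of \cite{ClarkMian2026_SHFflow}, and remove spatial cutoffs by monotone convergence (the paper cuts off both $f$ and the terminal variable with $\chi_R$, whereas you approximate $f$ in $L^1$, which is an equally valid alternative). Your caution about the constant $c$ is justified, since the paper's own computation silently drops the $\frac12$ and uses $c=1$. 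Also note that with a terminal cutoff the conditional expectation is $\mathscr Z_{t_1,s}^\theta$ tested against a heat-smoothed $\chi_R$ rather than $\chi_R$ itself (the paper writes the latter); this still increases to $M_s$ as $R\to\infty$, so your limiting argument goes through.
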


\begin{proof}
	Take a family of cutoff functions
	\[
	\chi_R\in C_c^\infty (\mathbb R^2),\quad \chi_R\ge 0  
	\]
	satisfying $\chi_R \uparrow 1$. Fix $t_1<t_2<t_3$ and, for $\epsilon>0$, set
	\[
	M_{t_3}^{(\epsilon)}
	:=
	\big(
	\mathscr Z_{t_1,t_2}^\theta
	\bullet_\epsilon
	\mathscr Z_{t_2,t_3}^\theta
	\big)(\chi_R f, \chi _R)
	\]
	which converges in $L^2$ to $ \mathscr Z_{t_1,t_3}^\theta (\chi_R f, \chi _R)$. Since $\mathscr Z_{t_2,t_3}^\theta$ is independent of
	$\mathcal F_{t_2}$, and
	\[
	\mathbb E\big[
	\mathscr Z_{t_2,t_3}^\theta(\td z_2,\td y)
	\big]
	=
	\frac{1}{2}g_{t_3-t_2}(z_2-y) \td z_2 \td y
	\]
	we have 
	\begin{align*}
		\mathbb E\left[
		M_{t_3}^{(\epsilon)}
		\middle| 
		\mathcal F_{t_2}
		\right]
		&=
		\int
		f(x)\,
		\mathscr Z_{t_1,t_2}^\theta(\td x,\td z_1)
		g_\epsilon(z_1-z_2)
		g_{t_3-t_2}(z_2-y)
		\td z_2 \td y
		\\
		&=
		\int
		f(x)\,
		\mathscr Z_{t_1,t_2}^\theta(\td x,\td z_1)
		=
		\mathscr Z_{t_1,t_2}^\theta(\chi_R f, \chi _R),
	\end{align*}
As a result, we have 
\[
\begin{aligned}
\mathbb E\left[
	\mathscr Z_{t_1,t_3}^\theta(\chi_R f, \chi _R)
\middle| 
\mathcal F_{t_2}
\right] - 		\mathscr Z_{t_1,t_2}^\theta(f,\mathbf1) = 
\mathbb E \left[
	\mathscr Z_{t_1,t_3}^\theta(\chi_R f, \chi _R) - M_{t_3}^{(\epsilon) } \middle| 
		\mathcal F_{t_2}
\right]
\end{aligned}
\]
which converges in $L^2$ to $0$ when $\epsilon \to 0$. Thus we have $\mathbb E\left[
\mathscr Z_{t_1,t_3}^\theta(\chi_R f, \chi _R)
\middle| 
\mathcal F_{t_2}
\right]  =\mathscr Z_{t_1,t_2}^\theta(\chi_R f, \chi _R)$. Finally, letting $R\to \infty$, we finishes the proof by the monotone convergence theorem.  
\end{proof}

\section{Moment comparison inequalities of SHF}
\label{section: negative moments proof}

In this section, we prove Theorem \ref{thm: SHF moment comparison ineq} and Theorem \ref{thm: left tail of SHF} which concern the moments comparison inequality and left tail of SHF, respectively. 
\begin{proof}[Proof of Theorem \ref{thm: SHF moment comparison ineq}]
	Set $M_N :=\cZ_{tN}^{\beta_N(\theta)} (f) $ and $M:= \mathscr Z_t^\theta (f)$. By Lemma \ref{lemma: convexity of Free energy}, $\log M_N$ is a convex functional of some finite-dimensional Gaussian vector and thus within the setting of Theorem \ref{thm: Chen, thm 1 on convexity}.
	
We start with showing \eqref{eqSHF: p<0, 1<p<2 moments upper bounds}. By Theorem \ref{thm: Chen, thm 1 on convexity} and Lemma \ref{thm: Ehrhard moment comparison ineq} with $p =2$, we have 
	\[
		\mathbb E [M_N^p] \le  (\mathbb E [M_N])^{p(2-p )}
	(\mathbb E [M_N^2 ])^{\frac{p(p-1)}{2} },\quad \forall p\in (-\infty ,0) \cup [1,2].  
	\]
	By \eqref{eqSHF: moments limit from Z_N, h =1,2}, we have
	\begin{equation}\label{eq: E Z_N ^p <, p<0}
\limsup_N \mathbb E [ \cZ_{tN}^{\beta_N(\theta)} (f) ^p ] 
\le 
\left( \mathbb E [ \mathscr Z_t^\theta (f) ]\right)^{p(2-p)}
\left( \mathbb E [ \mathscr Z_t^\theta (f) ^2]\right)^{\frac{p(p-1)}{2}},\quad \forall p\in (-\infty ,0) \cup (1,2].
	\end{equation}
	Since $\cZ_{tN}^{\beta_N(\theta)} (f)$ converges weakly to $\sZ_t^\theta (f)$, we have, for any $K >0$,
	\begin{equation}\label{eqSHF: a truncation argument}
	\mathbb E [ \sZ_{t}^{\theta} (f) ^p\wedge K] = \limsup_N \mathbb E [ \mathcal Z_{tN}^{\beta_N(\theta)} (f) ^p\wedge K] \le 	\left( \mathbb E [ \sZ_t^\theta (f) ]\right)^{p(2-p)}
 \left( \mathbb E [ \mathscr Z_t^\theta (f) ^2]\right)^{\frac{p(p-1)}{2}}.
	\end{equation}
	Letting $K \uparrow \infty$ proves \eqref{eqSHF: p<0, 1<p<2 moments upper bounds}. 
	
	From the boundedness of negative moments given in \eqref{eqSHF: p<0, 1<p<2 moments upper bounds}, we have $\mathscr Z_t^\theta (f)$ is a.s. positive. Since $\cZ_{tN}^{\beta_N(\theta)} (f)$ converges weakly to $\mathscr Z_t^\theta (f)$, $\log \cZ_{tN}^{\beta_N(\theta)} (f)$ also converges weakly to $\log \mathscr Z_t^\theta (f)$ which is non-constant. By Lemma \ref{lemma: Ehrhard type is close}, $\log \mathscr Z_t^\theta (f)$ is Ehrhard-type and thus the moment comparison inequality \eqref{eqSHF: moment comparison inequality} and \eqref{eqSHF: E log Z two-side bound} follow from Theorem \ref{thm: Ehrhard-type convexity} and Lemma \ref{thm: Ehrhard moment comparison ineq}.   
\end{proof}

\begin{proof}[Proof of Theorem \ref{thm: left tail of SHF}]
From the proof of Theorem \ref{thm: SHF moment comparison ineq} we know that $\log \sZ^\gt_t(f)$ is Ehrhard-type. By Lemma \ref{lemma: Ehrhard and convex Gaussian}, it is then a convex functional of a Gaussian random variable. Hence we can conclude from Theorem \ref{thm: Chen, theorem 3, left tail}.
\end{proof}

\section{Moments estimate for locally averaged SHF}\label{section: local moments of SHF}
In this section we prove Theorem \ref{thm: SHF, local moments}  concerning the moments of locally averaged SHF. Recall that we write $\mathcal U_\epsilon (x):= \frac{1}{\pi \epsilon^2} \mathds{1}_{B(0,\epsilon)}$.
   
\begin{proof}[Proof of Theorem \ref{thm: SHF, local moments}]

The cases when $p\in \{ 0,1,2\}$ are trivial so we omit them here. By the moment comparison inequality \eqref{eqSHF: moment comparison inequality}, we have 
\begin{enumerate}
	\item When $p\in (-\infty ,0) \cup (1,2)$,  
	$
	\mathbb E [\mathscr{Z}_t^\theta (\mathcal U_\epsilon )^p] \le (\mathbb E [ \mathscr{Z}_t^\theta (\mathcal U_\epsilon)^2] )^{p(p-1)/2}.
	$
	\item When $p\in (0,1)$,
	$
	\mathbb E [\mathscr{Z}_t^\theta (\mathcal U_\epsilon)^p] \ge (\mathbb E [ \mathscr{Z}_t^\theta (\mathcal U_\epsilon)^2] )^{p(p-1)/2}.
	$
	\item When $q\ge 2$,
	$
	\mathbb E [\mathscr{Z}_t^\theta (\mathcal U_\epsilon)^q] \ge  (\mathbb E [ \mathscr{Z}_t^\theta (\mathcal U_\epsilon)^2] )^{q(q-1)/2}.
	$
\end{enumerate}
It remains to estimate the reverse direction. We introduce here a basic comparison between SHF tested on the small ball and heat kernel which will be used later: Since
\[
\mathcal U_\epsilon (x) =\frac{1}{\pi \epsilon ^2 } \mathds{1}_{B(0,\epsilon)} (x) \le e \cdot g_{\epsilon^2 /2 } (x),\quad \forall x\in \mathbb R^2,
\]
and SHF as a random measure satisfies $\mathscr Z_t^\theta (a f) = a \mathscr Z_t^\theta (f)$ for any $a>0$ and $f\in L^1 (\mathbb R^2)$ (refer to \cite{CSZ2023_2dcSHF}), we have $\mathscr Z_t^\theta (\mathcal U_\epsilon ) \le C \mathscr Z_t^\theta ( g_{\epsilon ^2 /2}).$

We start with the upper bound for each $p\ge 2$ moment. By Proposition \ref{propo: SHF integer moment upper bound}, for every integer $h\ge 2$,
\[
\mathbb E [\mathscr Z_t^\theta (\mathcal U_\epsilon) ^h] 
\le C \mathbb E [\mathscr Z_t^\theta (g_{\epsilon ^2 /2 }) ^h] 
\le C 
\left(
\log \frac{1}{\epsilon}
\right)^{\frac{h(h-1)}{2} }
\]
As a result of the moment comparison \eqref{eqSHF: moment comparison inequality} with $p\ge 2$ and integer $h \in \bbN$, we have 
\[
\mathbb E [\mathscr Z_t^\theta ( \mathcal U_\epsilon   ) ^p] 
\le 
C \left(
\mathbb E [\mathscr Z_t^\theta ( \mathcal U_\epsilon    ) ^h]
\right)^{\frac{p(p-1)}{h(h-1) }}
\le C \left(
\log \frac{1}{\epsilon}
\right)^{\frac{p(p-1)}{2} }.
\]
 By \eqref{eqSHF: local second moments}, we further have 
 \[
 \mathbb E [\mathscr Z_t^\theta ( \mathcal U_\epsilon   ) ^p] 
 \le C 
 \left(
  \mathbb E [\mathscr Z_t^\theta ( \mathcal U_\epsilon   ) 2] 
 \right)^{\frac{p(p-1)}{2}}.
 \] 

We now consider the other cases. By Proposition \ref{propo: SHF local moments, p<0, lower bound}, for every $h<0$, there is $C >0$ such that,
\begin{equation}\label{eqLocal: mid 1}
\mathbb E [\mathscr Z_t^\theta ( \mathcal U_\epsilon   ) ^h] \ge 
C \mathbb E [\mathscr Z_t^\theta ( g_{\epsilon ^2 /2 }   ) ^h] \ge  
C  \left(
\log \frac{1}{\epsilon}
\right)^{\frac{h(h-1)}{2} }.
\end{equation}
By \eqref{eqSHF: local second moments}, we have 
\[
\mathbb E [\mathscr Z_t^\theta ( \mathcal U_\epsilon   ) ^h] \ge  C \left(
\mathbb E [\mathscr Z_t^\theta ( \mathcal U_\epsilon   ) ^2] 
\right)^{\frac{h(h-1)}{2}}.
\]
From the comparison inequality \eqref{eqSHF: moment comparison inequality}, we have, for any fixed $h<0$,  
\begin{equation}\label{eqLocal: mid 2}
\mathbb E [\mathscr Z_t^\theta ( \mathcal U_\epsilon) ^p]  \ge 
\left(
\mathbb E [\mathscr Z_t^\theta ( \mathcal U_\epsilon) ^h]  
\right)^{\frac{p(p-1)}{h(h-1)}},\quad \forall p\in (1,2)
\end{equation}
and 
\begin{equation}\label{eqLocal: mid 3}
\mathbb E [\mathscr Z_t^\theta ( \mathcal U_\epsilon) ^p]  \le 
\left(
\mathbb E [\mathscr Z_t^\theta ( \mathcal U_\epsilon) ^h]  
\right)^{\frac{p(p-1)}{h(h-1)}},\quad \forall p\in (0,1).
\end{equation}
Combining \eqref{eqLocal: mid 1}-\eqref{eqLocal: mid 3} finishes the proof.  
\end{proof}

\subsection{Upper bound for the integer moments of locally averaged SHF}
\label{subsection: int local moments, upper bound}

In this subsection, we study the upper bound for local integer moments of SHF. We take notations 
\[
\mathcal M_{h,\epsilon}^{\theta}
:=\mathbb E\!\left[\left(
\mathscr   Z_1^\theta(g_{\epsilon^2/2})\right)^h\right].
\]
For an integer $h\ge 2$, write
\begin{equation}\label{eqU: q_h}
	q_h:=\frac{h(h-1)}{2}-1.
\end{equation}

\begin{proposition}\label{propo: SHF integer moment upper bound}
	For every fixed integer $h\ge2$ and every $\theta\in\mathbb R$, there are constants $C=C(h,\theta)<\infty$ and $\varepsilon_0=\varepsilon_0(h,\theta)>0$
	such that
	\begin{equation}\label{eq:intmom-main}
		\mathcal M_{h,\varepsilon}^{\theta}
		\leq C\left(\log\frac1\varepsilon\right)^{\binom{h}{2}},
		\qquad 0<\varepsilon\leq\varepsilon_0.
	\end{equation}
	
\end{proposition}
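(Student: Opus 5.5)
We follow the route already announced in the introduction: start from the collision-diagram series bound for $\mathcal M_{h,\epsilon}^\theta$ obtained in \cite{LZ2024_micromomentSHF}, and control the whole series at once by a supersolution (bootstrap) argument rather than term by term.

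\textbf{Step 1: reduction to a Neumann series.} We use the Laplace-transform trick with weight $e^{\lambda}$: since $\int_0^1$ of the time-ordered integrals is bounded by $e^{\lambda}\int_0^\infty e^{-\lambda t}\cdots$, the moment is bounded by a series over sequences of pair interactions. In this series each interacting pair $\{i,j\}$ contributes the kernel $G_\theta$ integrated against the heat-kernel transitions. Integrating the free Gaussian evolution between interactions, one should be able to rewrite this as
\[
\mathcal M_{h,\epsilon}^\theta\le C e^{\lambda}\int \mu_\epsilon(\td x)\sum_{k\ge0}\bigl((qK_\lambda)^k\mathbf 1\bigr)(x).
\]
Here $x$ is the relative squared distance variable for the currently interacting pair, with initial law $\mu_\epsilon$ concentrated at scale $\epsilon^2$. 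The factor $q=q_h$ from \eqref{eqU: q_h} counts the number of pairs different from the current one, and the operator is
\[
K_\lambda f(w)=\int F_\lambda(w+\cdot)\,f\,\cdots,
\]
built from the weighted transform $F_\lambda$ of \eqref{eqPre: F_lambda(omega)}. The precise form of $K_\lambda$ follows from Gaussian integration of $g_{a}(x'-x)$ against the pair's relative coordinate, which produces denominators $w+v/2$. We will take $\lambda\ge\lambda_0$ large but fixed.

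\textbf{Step 2: construct a supersolution.} We seek $H\ge\mathbf 1$ with $\mathbf 1+qK_\lambda H\le H$. Given such an $H$, induction on $S_N=\sum_{k\le N}(qK_\lambda)^k\mathbf 1$ gives $S_N\le H$, hence $\sum_k(qK_\lambda)^k\mathbf 1\le H$ by monotone convergence. The natural ansatz is
\[
H(w)=A\,(\log(1/w))^{q}\quad\text{for } w\le w_0,\qquad H(w)=A\,(\log(1/w_0))^{q}\quad\text{for } w>w_0,
\]
possibly with a correction factor such as $(\log(1/w))^{q}(1+B/\log(1/w))$ to absorb lower-order errors. Heuristically,
\[
K_\lambda[(\log(1/\cdot))^q](w)\approx \frac{(\log(1/w))^{q+1}}{q+1}\cdot\frac{1}{\log(1/w)}\cdot\text{(Jacobian factors)},
\]
so that $qK_\lambda H\approx \frac{q}{q+1}H$ up to relative errors of order $1/\log(1/w)$. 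The inequality then closes using the sharp leading constant $1$ in \eqref{eq:intmom-F-sharp}, with $C_F$ controlling the corrections. For large $w$ we use \eqref{eq:intmom-F-regular}: taking $\lambda$ large makes $qK_\lambda$ a contraction there.

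\textbf{Step 3: integrate against the initial law.} It remains to check
\[
\int \mu_\epsilon(\td x)H(x)\le C(\log 1/\epsilon)^{q+1}=C(\log 1/\epsilon)^{\binom h2},
\]
which is the integral bound \eqref{eq: int H(x)/x}. This should follow since $\mu_\epsilon$ has density comparable to $x^{-1}$-weighted mass on $[\epsilon^2,1]$. The extra power of $\log(1/\epsilon)$ comes from integrating $\int_{\epsilon^2} H(x)/x\,\td x$.

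The main obstacle is Step 2. The exponent must be exact, which means the supersolution inequality must hold with no loss in the leading constant. A loss would reintroduce an $o(1)$ in the exponent, which is exactly the $\log^{(3)}$ defect of \cite{LZ2024_micromomentSHF}. Our first attempt will be to verify $qK_\lambda H\le H-1$ pointwise using the second-order term $C_F/(w(\log 1/w)^2)$ together with the correction parameter $B$. If the leading ratio turns out to be exactly $1$ rather than $q/(q+1)$, we will instead tune the exponent of the ansatz to $q+\delta$ inside $H$, still ensuring that the final integral yields only $(\log 1/\epsilon)^{\binom h2}$.
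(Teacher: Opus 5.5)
\textbf{There is a genuine gap in Step 2.} Your architecture matches the paper's: the collision-series bound of \cite{LZ2024_micromomentSHF}, the weight $e^{2\lambda}e^{-\lambda\sum_i(u_i+v_i)}$, the Neumann series $\sum_k(q_hK_\lambda)^k\mathbf 1$ with $K_\lambda f(x)=\int_0^2F_\lambda(y+\frac{x}{2})f(y)\,\mathrm{d}y$, a supersolution, and then $\int_{\epsilon^2}^2H(x)\frac{\mathrm{d}x}{x}$. But Step 2, the only nontrivial step, rests on a wrong heuristic, and neither of your ansätze closes.

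The factor $\frac{1}{\log(1/y)}$ coming from $F_\lambda(y)\approx\frac{1}{y\log(1/y)}$ sits inside the $y$-integral. So with $L:=\log\frac{1}{x}$,
\[
qK_\lambda\bigl[(\log\tfrac{1}{\cdot})^r\bigr](x)\approx q\int_x^\delta\frac{(\log\frac{1}{y})^{r-1}}{y}\,\mathrm{d}y\approx\frac{q}{r}\,L^r .
\]
For $r=q$ the ratio is exactly $1$, not $\frac{q}{q+1}$. This criticality is forced. If your heuristic were right, exponent $r=q-1+\delta$ would already give a supersolution. Hence $\mathcal M^\theta_{h,\epsilon}\lesssim(\log\frac{1}{\epsilon})^{\binom{h}{2}-1+\delta}$, contradicting the known lower bound $\mathcal M^\theta_{h,\epsilon}\gtrsim(\log\frac{1}{\epsilon})^{\binom{h}{2}}$.

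Consequently $AL^q$ cannot be shown to be a supersolution with the available input \eqref{eq:intmom-F-sharp}. Three lower-order contributions are all positive, and nothing at leading order absorbs them:
\begin{itemize}
\item the $C_F$-correction, contributing about $\frac{qC_F}{q-1}L^{q-1}$;
\item the near-diagonal region $y<x$, contributing about $qC_0L^{q-1}$;
\item the source term $1$.
\end{itemize}
Your fallback of raising the exponent to $q+\delta$ does create slack. But then $\int_{\epsilon^2}^2H(x)\frac{\mathrm{d}x}{x}\asymp(\log\frac{1}{\epsilon})^{\binom{h}{2}+\delta}$, which is exactly the non-sharp loss you wanted to avoid. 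It cannot ``still'' yield $(\log\frac{1}{\epsilon})^{\binom{h}{2}}$.

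\textbf{The missing idea} is that the slack must come from a \emph{negative} subleading correction. The operator amplifies the next power: $qK_\lambda[(\log\frac{1}{\cdot})^{q-1}]\approx\frac{q}{q-1}L^{q-1}$ with $\frac{q}{q-1}>1$. Here $q=q_h\ge2$ because $h\ge3$.

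The paper therefore takes $H=L^q-AL^{q-1}$ on $(0,\delta]$ and $H\equiv B$ on $(\delta,2]$. This gives
\[
1+qK_\lambda H\le L^q-(A+\eta)L^{q-1}+\tfrac32,\qquad \eta=\frac{A-qC_F-q(q-1)C_0}{q-1},
\]
and $\eta>0$ once $A>qC_F+q(q-1)C_0$. The terms at $\delta$ are nonpositive because $\log\frac{1}{\delta}\ge b$. The constant $\frac32$, which comes from the tail $y>\delta$ and the source term, is absorbed by $\eta L^{q-1}$ for $\delta$ small. The region $x>\delta$ is handled by taking $\lambda$ large via \eqref{eq:intmom-F-regular}.

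In your parametrization $AL^q(1+B/L)$, this means $B$ must be negative and large in absolute value. With $B\ge0$ the correction adds a further excess $\frac{AB}{q-1}L^{q-1}\ge0$, and the inequality fails for small $x$. Without identifying this sign mechanism, Step 2 is not a proof.
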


\begin{proof}
	For $h=2$, \eqref{eq:intmom-main} is the standard second-moment bound as shown in \eqref{eqSHF: local second moments}. Assume henceforth that $h\ge3$. 
	We prove the proposition in two steps. 
	
	\textbf{Step 1: Linear-operator representation of the upper bounds.}  Let $G_\theta$ be the replica-overlap density.  By
	\cite[Lemma~3.2]{LZ2024_micromomentSHF}, for $m\geq2$ one has
	\begin{equation}\label{eq:intmom-LZ-input}
		\begin{aligned}
			\mathcal M_{h,\varepsilon}^{\theta}
			\le & C_{h,\theta}\left(1+\log\frac1\varepsilon\right)\\
			&+\sum_{m=2}^{\infty}\binom{h}{2}q_h^{m-1}
			\int_{\mathcal D_{m,\varepsilon}}
			\frac{1}{u_1}
			\left(\prod_{r=1}^{m-1}
			\frac{G_\theta(v_r)}{\frac12(u_r+v_r)+u_{r+1}}
			\right)G_\theta(v_m)
			\,\mathrm{d}\boldsymbol u\,\mathrm{d}\boldsymbol v,
		\end{aligned}
	\end{equation}
	where
	\begin{equation}\label{eq:intmom-domain} 
		\mathcal D_{m,\varepsilon}
		:=\left\{(\boldsymbol u,\boldsymbol v)\in[0,\infty)^{2m}:
		\sum_{i=1}^m(u_i+v_i)\leq1+\varepsilon^2,
		\quad u_1>\varepsilon^2\right\}.
	\end{equation}
	Fix $\lambda_0>e^{\theta-\gamma_{\mathrm E}}$.  For
	$\lambda\geq\lambda_0$, define
	\begin{equation}\label{eq:intmom-F-def}
		F_\lambda(w)
		:=\int_0^\infty
		\frac{e^{-\lambda v}G_\theta(v)}{w+v/2}\,\mathrm{d}v,\quad \forall w>0
	\end{equation}
	which is positive and nonincreasing. On the domain \eqref{eq:intmom-domain},
$\sum_i(u_i+v_i)\leq2$.  Hence
\[
1\leq e^{2\lambda}
\exp\!\left(-\lambda\sum_{i=1}^m(u_i+v_i)\right).
\]
Insert this factor in \eqref{eq:intmom-LZ-input}.  Integrating first in $v_m$ by \eqref{eq:intmom-G-Laplace}, and then successively in
$v_{m-1},\ldots,v_1$ by \eqref{eqPre: F_lambda 2}, gives
\begin{equation}\label{eq: M_{h,epsilon}^theta <}
	\mathcal M_{h,\varepsilon}^{\theta}
	\le C_{h,\theta,\lambda_0}e^{2\lambda}
	\left(1+\log\frac1\varepsilon
	+\sum_{m=2}^\infty\mathcal J_{m,\varepsilon}^{(h,\lambda)}\right),
\end{equation}
where
\begin{equation}\label{eq: J_{m,epsilon}^{h,lambda}}
	\begin{aligned}
	\mathcal J_{m,\varepsilon}^{(h,\lambda)}
:= &\binom{h}{2}q_h^{m-1}
\int_{\substack{u_i\geq0,\ \sum_{i=1}^m u_i\leq2\\
		u_1>\varepsilon^2}}
\frac{\td u_1}{u_1}
\prod_{r=2}^m
F_\lambda\!\left(u_r+\frac{u_{r-1}}2\right)
  \td u_2...\td u_m\\
\le & 
\binom{h}{2}q_h^{m-1}
\int_{\epsilon^2} ^2 \frac{\td u_1}{u_1}
\int_{[0,2]^{m-1}}
\prod_{r=2}^m
F_\lambda\!\left(u_r+\frac{u_{r-1}}2\right)
  \td u_2...\td u_m.
	\end{aligned}
\end{equation}
 For any nonnegative measurable function $f$ on $(0,2]$, define the integral operator
 \begin{equation}\label{eq: K_lambda f}
 	(K_\lambda f)(x)
 	:=\int_0^2F_\lambda\!\left(y+\frac{x}{2}\right)f(y)\,\td y.
 \end{equation}
 Then \eqref{eq: J_{m,epsilon}^{h,lambda}} is equivalent to
 \[
 	\mathcal J_{m,\varepsilon}^{(h,\lambda)}\le 
 \frac{h(h-1)}{2} \int _{\epsilon^2} ^2 
 (q_h K_\lambda)^{m-1}\textbf{1}(x) \frac{\td x}{x}
 \]
 where $(q_h K_\lambda)^{m-1}$ means the composition of linear operators. We can then bound 
\begin{equation}\label{eqU: represtt of upper bounds for M_epsilon}
		\mathcal M_{h,\varepsilon}^{\theta} \le 
		C e^{2\lambda} (1+ \log \frac{1}{\epsilon}) 
		+ C e^{2\lambda} 
		\frac{h(h-1)}{2} \int _{\epsilon^2} ^2 
		\sum _{k=0}^\infty (q_h K_\lambda)^{k}\textbf{1}(x) \frac{\td x}{x}
\end{equation}
	
	\textbf{Step 2: Construction of the super solutions.}
	In Lemma~\ref{lemma: H(x) super solution}, we choose a fixed large $\lambda >0$ and construct a positive function $H:(0,2] \to [1,\infty )$ such that 
	\[
	1+q_h K_\lambda H(x) \le H(x),\quad x\in (0,2].
	\]
	 By induction, we have 
\begin{equation}\nonumber
	\sum_{k=0}^N(q_hK_\lambda)^k\mathbf{1}(x)\leq H(x),
	\qquad N\ge0,\ x\in(0,2].
\end{equation}
Letting $N\to\infty$, by monotone convergence theorem, we have
\begin{equation}\label{eqU: sum J < int H(x)/x}
	\sum_{m=2}^\infty\mathcal J_{m,\varepsilon}^{(h,\lambda)}
	\leq \binom{h}{2}\int_{\varepsilon^2}^2\frac{H(x)}{x}\,\mathrm{d}x.
\end{equation}
For $0<\varepsilon\leq\sqrt{\delta}$ with $\delta >0$ fixed and shown in Lemma \ref{lemma: H(x) super solution}, the definition \eqref{eqU: H(x)} of $H$ gives
\begin{equation}\label{eq: int H(x)/x}
\begin{aligned}
	\int_{\varepsilon^2}^2\frac{H(x)}x\,\td x
	&\leq\int_{\varepsilon^2}^{\delta}
	\frac{(\log \frac{1}{x})^{q_h}}x\,\td x
	+B\int_\delta^2\frac{\,td x}{x}\\
	&\leq C_{h,\theta}
	\left(\log\frac1\varepsilon\right)^{q_h+1}
	=C_{h,\theta}
	\left(\log\frac1\varepsilon\right)^{\binom{h}{2}}.
\end{aligned}
\end{equation}

Plugging \eqref{eq: int H(x)/x} into \eqref{eq: M_{h,epsilon}^theta <} proves
\eqref{eq:intmom-main}.
\end{proof}

\begin{lemma}\label{lemma: H(x) super solution}
	Fix $h\geq3$ and put $q:=q_h\geq2$.  There exist fixed
	$\lambda\geq\lambda_0$, $A>0$ and $\delta>0$, and a function $H:(0,2]\to[1,\infty)$ defined by 
	\begin{equation}\label{eqU: H(x)}
		H(x)=
		\begin{cases}
			(\log \frac{1}{x})^q-A(\log \frac{1}{x})^{q-1},&0<x\leq\delta,\\
			B,&\delta<x\leq2,
		\end{cases}
		\qquad
		B:=(\log\frac{1}{\delta})^q-A(\log\frac{1}{\delta})^{q-1},
	\end{equation}
	such that
	\begin{equation}\label{eq:intmom-supersolution}
		1+qK_\lambda H(x) \le H(x)
		\qquad x\in(0,2].
	\end{equation}
	
\end{lemma}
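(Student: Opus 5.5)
We have to show $1+qK_\lambda H\le H$ on $(0,2]$, with
\[
(K_\lambda H)(x)=\int_0^2F_\lambda\!\left(y+\tfrac x2\right)H(y)\,\td y .
\]
Write $L(x):=\log\frac1x$. The plan is to bound $K_\lambda H(x)$ by splitting the $y$-integral at the scale $x$, and to fix the constants in a definite order: first $\delta$ small, then $A$ large, then $\lambda$ large.

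\textbf{Ordering of constants.} Choose $\delta\le w_0$ so that $H\ge1$ and $H$ is nonincreasing on $(0,\delta]$. Then $B\le H(y)\le L(y)^q$ for all $y\le 2$, and $B\ge1$ once $\delta$ is small relative to $A$. Since we want $B$ large, we may have to take $\delta$ much smaller than $e^{-2A}$. The intended order is therefore: fix $A$ in terms of $q$ and $C_F$; then take $\delta$ small depending on $A$; then take $\lambda$ large depending on both.

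\textbf{Region $x>\delta$.} Here we need $1+qK_\lambda H(x)\le B$. By \eqref{eq:intmom-F-regular},
\[
F_\lambda\!\left(y+\tfrac x2\right)\le \frac{2}{x\,(\log\lambda-\theta+\gamma_{\mathrm E})},
\]
so
\[
K_\lambda H(x)\le \frac{2}{\delta\,(\log\lambda-\theta+\gamma_{\mathrm E})}\int_0^2 L(y)^q\,\td y=\frac{C_{q,\delta}}{\log\lambda-\theta+\gamma_{\mathrm E}} .
\]
This can be made $\le1$ by taking $\lambda$ large, and then $B\ge 1+q$ holds for $\delta$ small.

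\textbf{Region $x\le\delta$.} Split $K_\lambda H(x)$ into three pieces.
\begin{itemize}
\item[(a)] $y\in(0,x)$. Since $F_\lambda$ is nonincreasing, use $F_\lambda(y+x/2)\le F_\lambda(x/2)\le\frac{C}{xL(x)}$. The integral $\int_0^x L(y)^q\,\td y$ is about $xL(x)^q$, so this piece is $O(L(x)^{q-1})$, with constant close to $2$ coming from \eqref{eq:intmom-F-sharp}. This is the dangerous piece.
\item[(b)] $y\in(x,\delta)$. Use $F_\lambda(y+x/2)\le F_\lambda(y)\le \frac{1}{yL(y)}+\frac{C_F}{yL(y)^2}$. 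Integrating $H(y)$ against this gives
\[
\int_x^\delta\Bigl(L^{q-1}-AL^{q-2}+C_FL^{q-2}\Bigr)\frac{\td y}{y}\approx \frac{L(x)^q}{q}-\frac{(A-C_F)\,L(x)^{q-1}}{q-1}+(\text{terms at }\delta).
\]
\item[(c)] $y\in(\delta,2)$. Using \eqref{eq:intmom-F-regular} with $w\ge\delta$, this piece is $O_\delta(1/\log\lambda)$.
\end{itemize}

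Multiplying by $q$, piece (b) contributes
\[
L^q-\frac{q(A-C_F)}{q-1}L^{q-1}.
\]
Compared with $H=L^q-AL^{q-1}$, this leaves a surplus of $\bigl(\tfrac{q}{q-1}-1\bigr)A\,L^{q-1}=\frac{A}{q-1}L^{q-1}$ in our favour. The surplus must absorb $qC_F\tfrac{q}{q-1}L^{q-1}$, the $q\cdot O(L^{q-1})$ from piece (a), the constant $1$, and the boundary terms at $\delta$.

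\textbf{Main obstacle.} Piece (a) has a fixed constant of order $q\cdot 2$ (or $q\cdot C$) times $L^{q-1}$, while the gain is $\frac{A}{q-1}L^{q-1}$. Taking $A$ large, say $A\ge (q-1)\bigl(4q+2q C_F/(q-1)+10\bigr)$, handles this. Then $\delta$ must be small enough that $H\ge1$ and that the boundary terms, which are negative anyway since the lower limit $\delta$ contributes $-L(\delta)^q/q$ and so help, stay harmless. One must also check that $H\ge1$ forces $L(\delta)>A$, which is consistent with the ordering above. The delicate point is making piece (a) precise: $\int_0^x L(y)^q\,\td y=xL(x)^q\bigl(1+O(q/L(x))\bigr)$ gives a contribution of $2qL(x)^{q-1}\bigl(1+o(1)\bigr)$ uniformly for $x\le\delta$. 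I expect this bookkeeping, together with verifying the pieces cleanly for all $x\le\delta$ and not just asymptotically, to be the main work.
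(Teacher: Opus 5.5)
Your proposal is correct and is essentially the paper's proof. It uses the same split of the $y$-integral at $x$ and $\delta$ when $x\le\delta$: on $(0,x)$ the bound $F_\lambda(x/2)\int_0^x(\log\frac1y)^q\,\dd y$, on $(x,\delta)$ the bound $F_\lambda(y)H(y)\le\frac1y\bigl[(\log\frac1y)^{q-1}-(A-C_F)(\log\frac1y)^{q-2}\bigr]$, and on $(\delta,2)$ the bound \eqref{eq:intmom-F-regular}. It also uses the same crude bound $1+qF_\lambda(\delta/2)\int_0^2H$ when $x>\delta$, and the same order of constants ($A$, then $\delta$, then $\lambda$).

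Two small fixes are needed. First, $H(y)\le(\log\frac1y)^q$ fails on $(\delta,2]$ (e.g.\ near $y=1$), so bound that region using $\int_0^2H<\infty$ instead. Second, the boundary contribution at $\delta$ is $-(\log\frac1\delta)^q+\frac{q(A-C_F)}{q-1}(\log\frac1\delta)^{q-1}$. This is not automatically nonpositive; it is only once $\log\frac1\delta\ge\frac{q(A-C_F)}{q-1}$, which is exactly the paper's requirement $\log\frac1\delta\ge b$.
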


\begin{proof}
	We first determine a series of constants that are needed later. Let $C_F$ and $w_0$ be those in Lemma~\ref{lem:intmom-F-bound}.  For the fixed
	integer $q$, choose $C_q>0 $ such that
	\begin{equation}\label{eq:intmom-log-integral}
		\int_0^x(\log \frac{1}{y})^q\,{\td}y
		\leq C_qx(\log \frac{1}{x})^q,
		\qquad 0<x\le e^{-1}.
	\end{equation}
	Set $C_0:=2C_q(1+C_F)$.  Choose $A$ so large that
	\begin{equation}\label{eq:intmom-A-choice}
		A>qC_F+q(q-1)C_0,
	\end{equation}
	and define
	\begin{equation}\label{eq: eta:=, b:=}
		\eta:=\frac{A-qC_F-q(q-1)C_0}{q-1}>0,
		\qquad
		b:=\frac{q(A-C_F)}{q-1}.
	\end{equation}
	Next choose $\delta\in(0,w_0]$ sufficiently small such that:
	\begin{equation}\label{eq:intmom-delta-choice}
		\log\frac{1}{\delta}\geq\max\{2A,b,1\},
		\qquad B\geq2,
		\qquad \eta \big(\log \frac{1}{\delta}\big)^{q-1}\geq\frac32.
	\end{equation}
	Then $L\mapsto L^q-AL^{q-1}$ is increasing on $[\log \frac{1}{\delta},\infty)$, and $H(x)$ in \eqref{eqU: H(x)} satisfies $H\geq B\ge 2$ and
	$H\in L^1(0,2)$. Lastly, put $d_\lambda:=\log\lambda-\theta+\gamma_{\mathrm E}$.  We choose
	$\lambda\geq\lambda_0$ large enough such that:
	\begin{equation}\label{eq:intmom-lambda-choice}
		\frac{qB(2-\delta)}{\delta d_\lambda}\leq\frac12,
		\qquad
		\frac{2q}{\delta d_\lambda}\int_0^2H(y)\,\mathrm{d}y\leq1.
	\end{equation}
	
	We now verify \eqref{eq:intmom-supersolution}. By \eqref{eq: K_lambda f}, it is equivalent to 
	\begin{equation}\label{eq: 1+qK_lambda H(x) =...<H(x)}
		1+q\int_0^2 F_\lambda(y+ x/2)H(y) \,\mathrm{d}y \le H(x),\quad x\in(0,2]. 
	\end{equation}
	
	We split the proof into the cases $x\in(0,\delta]$ and $x\in(\delta,2]$. First consider $0<x\leq\delta$.  Split the integral 
	over $(0,x)$, $(x,\delta)$, and $(\delta,2)$.
	
	For $0<y<x$, monotonicity of $F_\lambda$, Lemma~\ref{lem:intmom-F-bound},
	and \eqref{eq:intmom-log-integral} imply
	\begin{equation}\label{eq:intmom-small-region}
		\int_0^xF_\lambda\!\left(y+\frac{x}{2}\right)H(y)\,\mathrm{d}y
		\le F_\lambda \big(\frac{x}{2}\big)\int_0^x \big(\log \frac{1}{y}\big)^q \,\mathrm{d}y
		\leq C_0\big(\log \frac{1}{x}\big)^{q-1}.
	\end{equation}
	For $x\leq y\leq\delta$,  since
	$F_\lambda(y+x/2)\leq F_\lambda(y)$, together with Lemma~\ref{lem:intmom-F-bound}, we have 
	\begin{align*}
		F_\lambda(y)H(y)
		&\le \left(\frac{1}{y\log \frac{1}{y}}+\frac{C_F}{y\big(\log \frac{1}{y}\big)^2}\right)
		\left(\big(\log\frac{1}{y}\big)^q-A\big(\log \frac{1}{y}\big)^{q-1}\right)\\
		&\leq\frac{\big(\log \frac{1}{y}\big)^{q-1}-(A-C_F)\big(\log \frac{1}{y}\big)^{q-2}}{y}
	\end{align*}
	Consequently,
	\begin{equation}\label{eq:intmom-middle-region}
		\begin{aligned}
			&q\int_x^\delta
			F_\lambda\!\left(y+\frac{x}{2}\right)H(y)\,\mathrm{d}y  \le\big(\log \frac{1}{x}\big)^q-\big(\log \frac{1}{\delta}\big)^q-b\left(\big(\log \frac{1}{x}\big)^{q-1}-\big(\log \frac{1}{\delta}\big)^{q-1}\right).
		\end{aligned}
	\end{equation}
	where $b$ is defined in \eqref{eq: eta:=, b:=}.
	For $\delta\leq y\leq2$, the first condition in
	\eqref{eq:intmom-lambda-choice} and \eqref{eq:intmom-F-regular} give
	\begin{equation}\label{eq:intmom-large-region}
		q\int_\delta^2
		F_\lambda\!\left(y+\frac{x}{2}\right)H(y)\,\mathrm{d}y
		\le \frac{q}{\delta d_\lambda} \int_\delta^2 B\,\mathrm{d}y
		\leq\frac12.
	\end{equation}
	Combining \eqref{eq:intmom-small-region}--\eqref{eq:intmom-large-region},
	and observing from \eqref{eq: eta:=, b:=} that
	$b-qC_0=A+\eta$, we obtain
	\begin{align*}
		1+qK_\lambda H(x)
		&\leq \big(\log \frac{1}{x}\big)^q-(A+\eta)\big(\log \frac{1}{x}\big)^{q-1}
		+\left(-\big(\log \frac{1}{\delta}\big)^q+b \big(\log \frac{1}{\delta}\big)^{q-1}\right)+\frac32\\
		& \le \big(\log \frac{1}{x}\big)^q -A \big(\log \frac{1}{x}\big)^{q-1}
		-\left(\big(\log \frac{1}{\delta}\big)^q-b \big(\log \frac{1}{\delta}\big)^{q-1}\right)
		-\Big( \eta\big(\log \frac{1}{x}\big)^{q-1} - \frac{3}{2} \Big)
		\\
		&\leq \big(\log \frac{1}{x}\big)^q-A\big(\log \frac{1}{x}\big)^{q-1}=H(x),\quad \forall x\in (0,\delta ].
	\end{align*}
	The last inequality follows from \eqref{eq:intmom-delta-choice}.
	
	We now prove \eqref{eq: 1+qK_lambda H(x) =...<H(x)} for $\delta<x\leq2$. Note that 
	\[
	\begin{aligned}
		1+q\int_0^2 F_\lambda(y+ x/2)H(y) \,\mathrm{d}y \le& 1+q F_\lambda \left( \frac{\delta}{2}\right) \int_0^2 H(y) \,\mathrm{d}y \\
		\le & 1+ \frac{2q}{\delta (\log \lambda -\theta +\gamma_{\mathrm E})} \int_0^2 H(y) \,\mathrm{d}y\le 2 
	\end{aligned}
	\]
	where the first inequality is due to the monotonicity of $F_\lambda$ and $y+x/2\ge \delta /2$, the second is due to \eqref{eq:intmom-F-regular}, and the third follows from the second condition in \eqref{eq:intmom-lambda-choice}. Finally,  by \eqref{eqU: H(x)} and \eqref{eq:intmom-delta-choice}, we have $H(x) \equiv B \ge 2$ for any $x\in (\delta, 2]$, which finishes the proof. 
\end{proof}

\subsection{Lower bound for the negative moments of locally averaged SHF}\label{subsection: Local negative moments lower bound}
 Recall that we write $\mathscr Z^\theta_{s,t}$ as the two-parameter stochastic heat flow and set
\[
 \sZ^\theta_{s,t} (f,f')
:=\int_{\R^4}f(x)f'(y)\sZ^\theta_{s,t}(\dd x,\dd y).
\]
In this section, we prove the following lower bound for the local negative moments of SHF.

\begin{proposition}\label{propo: SHF local moments, p<0, lower bound}
Fix $t>0$, $\theta \in \mathbb R$ and $p<0$. There is $C:= C(p,t,\theta)>0$ and $\epsilon_0\in (0,1/2)$ such that 
\begin{equation}\label{eqSHF: local p<0 moments lower bound}
\mathbb E [\sZ_t^\theta(g_\epsilon,\boldsymbol 1)^p] \ge C \left (\log \frac{1}{\epsilon} \right )^{p(p-1)/2 },\quad \forall \epsilon \in (0,\epsilon_ 0 ). 
\end{equation} 

\end{proposition}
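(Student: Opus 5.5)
We follow the strategy announced in Subsection~\ref{subsec: discussion on the proofs}: a martingale reduction to an early time, a block-product approximation, and an estimate of the negative moment of that product on a good event. Throughout, set $t_i:=\epsilon^2 b^{-2i}$ and $N:=\lfloor \log\epsilon^{-1}/\log b^{-1}\rfloor$, with $b\in(0,1)$ fixed small.

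\textbf{Step 1: reduction to an early time.} Since $\mathscr Z^\theta_{0,\epsilon^2}$ averaged over a ball of radius $\epsilon$ has moments of every order bounded uniformly in $\epsilon$ (this is the scaling regime where $\epsilon^2$ is the time scale), we first replace $\sZ_t^\theta(g_\epsilon,\mathbf 1)$ by $\mathscr Z^\theta_{\epsilon^2,t}(g_\epsilon,\mathbf 1)$. This replacement costs only multiplicative constants, via H\"older and the flow property. Next, Lemma~\ref{lemma: MG of SHF} says that $s\mapsto \mathscr Z^\theta_{\epsilon^2,s}(g_\epsilon,\mathbf 1)$ is a positive martingale. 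Because $x\mapsto x^p$ is convex for $p<0$, conditional Jensen gives
\[
\mathbb E[\mathscr Z^\theta_{\epsilon^2,t}(g_\epsilon,\mathbf 1)^p]\ge \mathbb E[\mathscr Z^\theta_{\epsilon^2,t_n}(g_\epsilon,\mathbf 1)^p]
\]
for any $t_n\le t$. We choose $n=N-K$ with $K$ a fixed large integer, so that all blocks used lie in the early-time regime where the block moment bounds (Lemma~\ref{prop:E U_i^4 <}, Lemma~\ref{lemma: Block variance}) hold.

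\textbf{Step 2: block decomposition.} We apply the decoupling expansion of \cite[Proposition~2.1]{GuTsai_2026_loglogCLT}:
\[
\mathscr Z^\theta_{\epsilon^2,t_n}(g_\epsilon,\mathbf 1)=\prod_{i=1}^n Z_i+R_n,\qquad Z_i:=Z_i(g_{t_{i-1}},\mathbf 1)=1+U_i.
\]
Here the $Z_i$ are independent with mean one. The remainder $R_n$ is bounded in the sense that $\mathbb E[(R_n/\prod_i Z_i)^2]$, or at least its $L^1$ norm on a good event, is small when $b$ is small.

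Define the good event $\mathcal G:=\{|U_i|\le 1/2\ \forall i\}\cap\{|R_n|\le \tfrac12\prod_i Z_i\}$. On $\mathcal G$ we have $\mathscr Z^p\ge 2^{p}\prod_i Z_i^p$ (recall $p<0$), so
\[
\mathbb E[\mathscr Z^p]\ge 2^p\,\mathbb E\Big[\prod_i Z_i^p\mathbf 1_{\mathcal G}\Big].
\]

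\textbf{Step 3: the product estimate, which is the main obstacle.} The exponent must come out exactly as $p(p-1)/2$ with no $o(1)$ loss. By independence, $\mathbb E[\prod_i Z_i^p\mathbf 1_{|U_i|\le1/2}]=\prod_i \mathbb E[(1+U_i)^p\mathbf 1_{|U_i|\le 1/2}]$. A Taylor expansion with $\mathbb E U_i=0$ gives
\[
\mathbb E[(1+U_i)^p\mathbf 1]\ge 1+\tfrac{p(p-1)}2\mathbb E[U_i^2]-C\,\mathbb E|U_i|^3-C\,\mathbb P(|U_i|>1/2),
\]
and the error terms are bounded by $C\,\mathbb E[U_i^4]^{3/4}\lesssim (N-i)^{-3/2}$ using Lemma~\ref{prop:E U_i^4 <}. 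The sharp variance estimate of Lemma~\ref{lemma: Block variance}, namely $\mathbb E[U_i^2]=\frac1{N-i}+O((N-i)^{-2})$ or at least a summable error, then yields
\[
\sum_i\log(\cdots)\ge \tfrac{p(p-1)}2\log N-C.
\]
Since $N\asymp\log(1/\epsilon)$, this gives $\prod_i\mathbb E[\cdots]\ge c(\log\frac1\epsilon)^{p(p-1)/2}$.

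It remains to remove the event $\{|R_n|>\frac12\prod Z_i\}$ without destroying the bound. The plan is a size-biased change of measure: under $\tilde{\mathbb P}\propto\prod_i Z_i^p\mathbf 1_{|U_i|\le1/2}$ the blocks stay independent and have moderate tilts. Using Markov's inequality together with the second-moment control of $R_n$ relative to the product (which also has to hold under the tilt, via bounded density ratios block by block), one shows that $\tilde{\mathbb P}(|R_n|>\tfrac12\prod Z_i)\le 1/2$ for $b$ small. This step, controlling the remainder under the tilted measure, is the one I expect to be the most delicate.

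Combining Steps 1 to 3 gives \eqref{eqSHF: local p<0 moments lower bound}.
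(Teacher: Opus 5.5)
\textbf{The gap: removing the remainder $R_n$.} Most of your outline matches the paper: the martingale reduction to $t_n$, the Gu--Tsai expansion, the good event $\{|U_i|\le 1/2\}$, and the sharp product estimate from $\mathbb E U_i^4\lesssim (N-i)^{-2}$ and Lemma~\ref{lemma: Block variance}. The step that carries the real difficulty, removing $R_n$, is not established, and the mechanism you sketch would not work as stated.

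The input available for the remainder is $\mathbb E B_I^2\le C^{|I|}(\log b^{-1})^{-(|I|-1)}(N-r)^{-2}$ for $I=\{m,\dots,r\}$. Bounding $|R_n|/\prod_i Z_i$ term by term on the good event gives a bound of order $\sum_I\sqrt{\mathbb E B_I^2}\asymp(\log b^{-1})^{-1/2}\sum_{r\le N-K}(N-r)^{-1}\asymp(\log b^{-1})^{-1/2}\log(N/K)$. This diverges as $\epsilon\to0$, so any smallness must come from the cancellation $\mathbb E B_I=0$.

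Passing to $\widetilde{\mathbb P}\propto\prod_i Z_i^p\mathbf 1_{\{|U_i|\le 1/2\}}$ destroys exactly that cancellation: under the tilt $\widetilde{\mathbb E}\,U_i\approx p/(N-i)\neq 0$, so $B_I$ is no longer centred. Moreover, ``bounded density ratios block by block'' cannot control an event like $\{|R_n|>\frac12\prod_i Z_i\}$, which depends on all $n\asymp\log(1/\epsilon)$ blocks. The global density is a product of $n$ factors, each as large as $2^{|p|}$. Even Cauchy--Schwarz on it costs $\prod_i \mathbb E[Z_i^{2p}\mathbf 1]/\mathbb E[Z_i^p\mathbf 1]^2\asymp (N/K)^{p^2}$.

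To complete your route you would need a tilted-mean bound $|\widetilde{\mathbb E}[B_I/\prod_{i\in I}Z_i]|\lesssim C^{|I|}(\log b^{-1})^{-(|I|-1)/2}(N-r)^{-3/2}$. You would also need control of all cross terms between overlapping intervals in $\widetilde{\mathbb E}[(R_n/\prod_i Z_i)^2]$. Neither is provided.

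\textbf{The missing idea.} Use convexity of $x\mapsto x^p$ a second time, through the tangent-line inequality $x^p\ge y^p+p(x-y)y^{p-1}$ with $x=M_n$, $y=\prod_i A_i$, on $\Omega=\bigcap_i\{|V_i|<1/2\}$. This gives $\mathbb E[M_n^p]\ge \mathbb E[\mathbf 1_\Omega\prod_i A_i^p]+p\,\mathbb E[\mathbf 1_\Omega(M_n-\prod_i A_i)(\prod_i A_i)^{p-1}]$. Only a \emph{linear} functional of the remainder then needs control: no event $\{|R_n|\le\frac12\prod_i Z_i\}$, no tilted probabilities, no second moments of $R_n$.

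By block independence and $\mathbb E[A_i^p\mathbf 1_{\Omega_i}]\ge 1$, the ratio of that functional to $\mathbb E[\mathbf 1_\Omega\prod_i A_i^p]$ is at most $\sum_{\mathcal I}\prod_{I\in\mathcal I}w_I\le \exp(\sum_I w_I)-1$, where $w_I=|\mathbb E[B_I\prod_{i\in I}A_i^{p-1}\mathbf 1_{\Omega_i}]|$. Write $A_i^{p-1}\mathbf 1_{\Omega_i}=1+h_i$ with $|h_i|\le C|V_i|$. The centring $\mathbb E B_I=0$ kills the constant term, and Cauchy--Schwarz with independence gives $w_I\lesssim C^{|I|}(\log b^{-1})^{-(|I|-1)/2}(N-r)^{-3/2}$. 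The extra factor $(N-r)^{-1/2}$ makes $\sum_I w_I\lesssim(\ell\log b^{-1})^{-1/2}$, where $\ell$ is your $K$. Hence $\exp(\sum_I w_I)-1\le \frac1{2|p|}$ once $b$ is fixed small and $\ell$ large; this is Lemma~\ref{lemma: ratio}.

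\textbf{A smaller point on Step 1.} Justifying the time delay via H\"older changes the exponent, so it loses the sharp power $p(p-1)/2$. The paper instead uses $\mathscr Z^\theta_{\epsilon^2,1}(g_\epsilon,\mathbf 1)\overset{d}{=}\mathscr Z^{\theta+\log(1-\epsilon^2)}_{0,1}(g_{\epsilon/(1-\epsilon^2)},\mathbf 1)$ and proves the delayed bound uniformly in $|\theta|\le c_0$. A conditional Jensen argument over the first block $[0,\epsilon^2]$ would also work.
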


Without loss of generality, we only discuss the case $t=1$. By the scaling and translational invariance of SHF (refer to Theorem 1.2 in \cite{CSZ2023_2dcSHF}), we have 
\[
\mathscr{Z}_{0,t}^\theta (g_\epsilon, \textbf{1}) \overset{d}{=} \mathscr Z_{0,1}^{\theta + \log t} (g_\epsilon, \textbf{1} ).
\]
And thus we can recover the result for general $t>0$ by proving our estimate holding uniformly for $\theta$ in some interval. Out of technical reasons, we restrict our discussion on SHF with a small time delay like in \cite{GuTsai_2026_loglogCLT}. Again, by the scaling and translational invariance of SHF, we have 
\[
\mathscr  Z_{\epsilon^2,1}^\theta (g_\epsilon,\textbf{1} )
\overset {d}{=} 
\mathscr  Z_{0,1}^{\theta + \log (1-\epsilon^2) } 
(g_{\epsilon/{1-\epsilon ^2 }} , \textbf{1})
\]
Thus, to prove Proposition \ref{propo: SHF local moments, p<0, lower bound}, it suffices to show the following result: 
\begin{proposition}\label{propo: time delay SHF}
	Fix $c _0>0$ and $p<0$. There exist $C(c_0,p)>0$ and $\epsilon_0 >0$ such that, for all $\theta \in (-c_0,c_0)$, $\epsilon \in (0,\epsilon_0)$,   
\begin{equation}\label{eqL: E Z_epsilon^2 ,1 ^p >}
	\mathbb E [(\mathscr Z_{\epsilon^2,t}^\theta (g_\epsilon,\textbf{1} ) )^p] \ge C \left (\log \frac{1}{\epsilon} \right ) ^{\frac{p(p-1)}{2}}
\end{equation}
\end{proposition}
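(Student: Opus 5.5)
The plan is to reduce to an earlier time by the martingale property, approximate the mass there by a product of independent block masses, and compute the negative moment of the product exactly to leading order. The key heuristic is that $\mathbb E[\mathscr Z_{\epsilon^2,1}^\theta(g_\epsilon,\mathbf 1)^2]\asymp\log\frac1\epsilon$. So $\mathscr Z$ behaves like a log-normal variable whose log-variance is $\sigma^2=\log\log\frac1\epsilon+O(1)$. Then $e^{\sigma^2p(p-1)/2}$ is exactly $(\log\frac1\epsilon)^{p(p-1)/2}$ up to a constant. The proof must therefore recover the log-variance with an $O(1)$ error and no $o(1)$ loss in the coefficient.

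\emph{Step 1 (reduction to an early time).} Fix a small $b\in(0,1)$ and set $t_i:=\epsilon^2b^{-2i}$ and $N:=\lfloor\log\epsilon^{-1}/\log b^{-1}\rfloor$. Then $M_t:=\mathscr Z^\theta_{\epsilon^2,t}(g_\epsilon,\mathbf 1)$ is a martingale by Lemma~\ref{lemma: MG of SHF}. Since $x\mapsto x^p$ is convex for $p<0$, conditional Jensen gives $\mathbb E[M_1^p]\ge\mathbb E[M_{t_n}^p]$ with $n:=N-K$, where $K=K(b)$ is a fixed number of discarded final blocks. It therefore suffices to bound $\mathbb E[M_{t_n}^p]$ from below. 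This bound should be uniform in $\theta\in(-c_0,c_0)$, since all block estimates below are uniform there.

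\emph{Step 2 (product approximation).} By the flow property and the decoupling expansion of \cite[Proposition~2.1]{GuTsai_2026_loglogCLT}, write
\[
M_{t_n}=\Pi_n+R_n,\qquad \Pi_n:=\prod_{i=1}^n(1+U_i),\qquad 1+U_i:=Z_i(g_{t_{i-1}},\mathbf 1).
\]
Here the $U_i$ are independent and centered. The relevant inputs are:
\begin{itemize}
\item the block variance $\mathbb E[U_i^2]=\frac{1}{N-i}+r_i$ with $\sum_i|r_i|\le C$ (Lemma~\ref{lemma: Block variance});
\item the fourth-moment bound $\mathbb E[U_i^4]\le C(N-i)^{-2}$ (Lemma~\ref{prop:E U_i^4 <}).
\end{itemize}

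\emph{Step 3 (negative moment of the product).} By independence, $\mathbb E[\Pi_n^p\mathbf 1_{E}]=\prod_i\mathbb E[(1+U_i)^p\mathbf 1_{E_i}]$, where $E_i:=\{|U_i|\le\frac12\}$ and $E:=\bigcap_iE_i$. On $E_i$, a Taylor expansion gives
\[
(1+U_i)^p\ge1+pU_i+\tfrac{p(p-1)}2U_i^2-C_p|U_i|^3 .
\]
Taking expectations and using $\mathbb E[U_i]=0$, the linear term contributes only $-|p|\,|\mathbb E[U_i\mathbf 1_{E_i^c}]|\le C\,\mathbb E[U_i^4]$. The quadratic term loses $\mathbb E[U_i^2\mathbf 1_{E_i^c}]\le C\,\mathbb E[U_i^4]$, and $\mathbb E|U_i|^3\le(\mathbb E[U_i^4])^{3/4}\le C(N-i)^{-3/2}$ is summable in $i$. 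Hence
\[
\mathbb E[(1+U_i)^p\mathbf 1_{E_i}]\ge\exp\Bigl(\tfrac{p(p-1)}{2}\mathbb E[U_i^2]-C(N-i)^{-3/2}\Bigr).
\]
Multiplying over $i$ and using $\sum_{i\le n}\frac1{N-i}=\log N+O(1)=\log\log\frac1\epsilon+O(1)$ gives $\mathbb E[\Pi_n^p\mathbf 1_E]\ge c\,(\log\frac1\epsilon)^{p(p-1)/2}$. Here the finer variance estimate with summable error $r_i$ is exactly what prevents an $o(1)$ loss in the exponent.

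\emph{Step 4 (remainder; the main obstacle).} On $E\cap\{|R_n|\le\Pi_n/2\}$ we have $M_{t_n}^p\ge(3/2)^p\Pi_n^p$. It remains to show that $\mathbb E[\Pi_n^p\mathbf 1_E\mathbf 1_{\{|R_n|>\Pi_n/2\}}]$ is at most half of the main term. I would bound it by $4\,\mathbb E[\Pi_n^{p-2}R_n^2\mathbf 1_E]$.
\begin{itemize}
\item First attempt: expand $R_n$ as the sum of interaction terms between consecutive blocks, as in \cite{GuTsai_2026_loglogCLT}. Each term factorizes over blocks except for a cross factor whose second moment carries a factor $\rho(b)$ that is small as $b\downarrow0$ and decays in the block distance. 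Conditioning blockwise, $\Pi_n^{p-2}$ should reproduce the same product structure as in Step 3 with only bounded tilts, giving a bound $c(b)\,(\log\frac1\epsilon)^{p(p-1)/2}$ with $c(b)\to0$.
\item Fallback: use Hölder with $\mathbb E[\Pi_n^{2p}\mathbf 1_E]\lesssim(\log\frac1\epsilon)^{p(2p-1)}$ (from Step 3), together with a bound on $\mathbb P(|R_n|>\Pi_n/2,E)$ that is polynomially small in $\log\frac1\epsilon$.
\end{itemize}

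Choosing $b$ small then closes the argument and yields \eqref{eqL: E Z_epsilon^2 ,1 ^p >}.
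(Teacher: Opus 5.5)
Your Steps 1--3 match the paper's argument: martingale reduction to $n=N-\ell$, the Gu--Tsai interval expansion, truncation to $\{|V_i|<\tfrac12\}$, and the Taylor bound using the sharp variance of Lemma~\ref{lemma: Block variance}. I use the paper's notation below: $A_i=1+U_i$, $\Omega_i=E_i$, and $B_I$ for the interval terms of $R_n$. The gap is Step~4, which carries the real difficulty, and neither of your options closes it.

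\emph{The fallback cannot work.} By Step~3, $(\mathbb E[\Pi_n^{2p}\mathbf 1_E])^{1/2}\approx(\log\frac1\epsilon)^{p(p-1)/2+p^2/2}$. Hölder would therefore require $\mathbb P(|R_n|>\Pi_n/2,\,E)\lesssim(\log\frac1\epsilon)^{-p^2}$. But $R_n/\Pi_n$ is dominated by the intervals ending near $n=N-\ell$. These are built from blocks $\mathscr Z^\theta_{t_{i-1},t_i}$ with $t_i=b^{2(N-i)}$, whose laws do not depend on $\epsilon$. So this probability has no reason to decay as $\epsilon\to0$.

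\emph{The first attempt, with ``bounded tilts'', also fails.} After tilting by $\Pi_n^{p-2}\mathbf 1_E$, the off-diagonal terms of $R_n^2$ contain a factor $w_I=|\mathbb E[B_I\prod_{i\in I}A_i^{p-1}\mathbf 1_{\Omega_i}]|$ for every interval $I$ of one copy not overlapped by the other. Bounding the tilt by a constant and applying Cauchy--Schwarz gives only $w_I\lesssim C^{|I|}(\log b^{-1})^{-(|I|-1)/2}(N-r)^{-1}$. Since $\sum_{r\le N-\ell}(N-r)^{-1}\approx\log(N/\ell)$, the cluster-sum bound becomes $\exp(c(b)\log N)-1\approx N^{c(b)}$ rather than $c(b)$. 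That swamps the main term.

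\emph{The missing idea is a cancellation coming from $\mathbb E B_I=0$.} The paper avoids $R_n^2$ entirely via the tangent inequality $x^p\ge y^p+p\,y^{p-1}(x-y)$ (valid for $p<0$, $x\ge0$, $y>0$), applied on $\Omega$ with $x=M_n$ and $y=\Pi_n$. It then only needs
\[
|\mathbb E[\mathbf 1_\Omega(M_n-\Pi_n)\Pi_n^{p-1}]|\le\tfrac1{2|p|}\,\mathbb E[\mathbf 1_\Omega\Pi_n^p].
\]
By the expansion, independence, and $\mathbb E[A_i^p\mathbf 1_{\Omega_i}]\ge1$, the ratio is at most $\sum_{\mathcal I}\prod_{I\in\mathcal I}w_I$. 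Now write $A_i^{p-1}\mathbf 1_{\Omega_i}=1+h_i$ with $|h_i|\le C_p|V_i|$, and expand $\prod_{i\in I}(1+h_i)$. The term containing no $h_i$ vanishes because $\mathbb E B_I=0$. Cauchy--Schwarz, independence of the $h_i$, and $\mathbb E h_i^2\lesssim(N-i)^{-1}$ then give
\[
w_I\le\sqrt{\mathbb E B_I^2}\,\Bigl[\prod_{i\in I}\bigl(1+C(N-i)^{-1/2}\bigr)-1\Bigr]\lesssim\frac{(1+C)^{|I|}}{(\log b^{-1})^{(|I|-1)/2}(N-r)^{3/2}}.
\]
The extra factor $(N-r)^{-1/2}$ makes $\sum_I w_I\lesssim(\ell\log b^{-1})^{-1/2}$. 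Hence $\sum_{\mathcal I}\prod w_I\le\exp(\sum_I w_I)-1\le\frac1{2|p|}$ for $b$ small and $\ell$ large.

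Your quadratic route could be repaired by inserting this same cancellation for unmatched intervals and separately bounding the overlapping pairs in $R_n^2$. The linearization makes that extra work unnecessary.
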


We consider the same block expansion as in \cite{GuTsai_2026_loglogCLT}. First, we separate the time interval $[\epsilon^2,1]$ by 
\[
t_i:= \epsilon^2 b^{-2i},\quad i=0,...,N:= \lfloor \frac{\log \epsilon^{-1} }{\log b^{-1} } \rfloor 
\]
with some $b>0$ small but fixed. Set $p_i:= g_{ {t_i}}$ for each $i$. Note that the choice of starting at $\epsilon ^2 $ instead of $0$ makes this geometric mesh construction more direct. To make it convenient for the discussion here, we may take some $\bar b >0$ small enough and take $b:=b(\epsilon)\in (\frac{\bar b}{2}, \bar b)$ so that $\frac{\log \epsilon^{-1} }{\log b^{-1} } \in \mathbb Z_+$ and thus $t_N =1$. Then, we take an integer $\ell >1$ and set 
$n:= N-\ell.$ For $1\le i \le n$, define 
\[
Z_i:= \sZ_{t_{i-1},t_i} ^\theta,\quad p_i = g_{t_i}.
\]
In this subsection, we set 
$M_i:=   \sZ_{\epsilon^2,t_i}^\theta (g_\epsilon, \textbf{1})$
for $i=1,...,N$ which is a non-negative martingale by Lemma \ref{lemma: MG of SHF}. As a result, to get a lower bound for negative moments of $M_N = \mathscr Z_{\epsilon^2, 1} (g_\epsilon ,\textbf{1} )$, it suffices to lower bound for negative moments of $M_n$. Also, some estimates are only available when we take $\ell$ large so that $M_n$ covers a short time interval. For example, in the second moment formula \eqref{eqSHF: second moment formula} of SHF, we have the expansion \eqref{eqSHF: G_theta (t small)} for $G_\theta$ only when the time is small. 

By the flow property of SHF (see \eqref{eq:flow-property}), we have 
$
M_n=  Z_1 \bullet ...\bullet Z_n(p_0, \textbf{1}).
$
In \cite{GuTsai_2026_loglogCLT}, $M_n$ is approximated by the product $\prod_{i=1}^n A_i$, where
\begin{equation}\label{eqL: A_i}
	A_i:= Z_i (p_{i-1}, \textbf{1}).
\end{equation}
The idea of approximating a partition function by a product of contributions from well-separated scales has appeared in several earlier works on marginally relevant polymers and the two-dimensional KPZ equation; see, for instance, \cite{CSZ2020_subcriticalKPZ,LygkonisZygouras2023_ErdosTaylor,CoscoDonadini2025_CLTforDPRE}.

To this end, we let $R_i(\dd x, \dd y):= Z_i( \dd x, \mathbf 1 )\, p_i(y) \dd y$ and define $D_i:= Z_i-R_i$. Proposition 2.1 in \cite{GuTsai_2026_loglogCLT} gives the following decomposition for $M_n$.  
\begin{proposition}
	For every discrete interval
	\[
	I=\{m,m+1,\ldots,r\}\subseteq \{1,\ldots,n\},
	\qquad |I|\ge 2,
	\]
	define
	\begin{equation}\label{eqL: B_I}
		B_I
		:=
		D_m\bullet D_{m+1}\bullet\cdots\bullet D_{r-1}
		\bullet Z_r(p_{m-1},\mathbf 1).
	\end{equation}
	Let
	\[
	\mathfrak I_n
	:=
	\left\{
	\mathcal I:
	\begin{array}{l}
		\mathcal I \text{ is a nonempty finite collection of pairwise disjoint}\\
		\text{intervals } I\subseteq\{1,\ldots,n\}
		\text{ satisfying } |I|\ge 2
	\end{array}
	\right\},
	\]
	and, for \(\mathcal I\in\mathfrak I_n\), write
	$
	U(\mathcal I):=\bigcup_{I\in\mathcal I} I.
	$
	Then
	\begin{equation}\label{eqL: M_n expansion}
		M_n
		=
		\prod_{i=1}^n A_i
		+
		\sum_{\mathcal I\in\mathfrak I_n}
		\left(\prod_{I\in\mathcal I} B_I\right)
		\left(\prod_{i\notin U(\mathcal I)} A_i\right)
	\end{equation}
	 where product over an empty index set is understood to be equal to $1$.
\end{proposition}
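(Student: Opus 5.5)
The plan is to expand the flow representation $M_n=Z_1\bullet\cdots\bullet Z_n(p_0,\mathbf 1)$ multilinearly using $Z_i=R_i+D_i$, and then to exploit the fact that each $R_i$ is a product measure. A product measure cuts the chain into independent factors.

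\textbf{Step 1: expansion into words.} Formally, since $\bullet$ is multilinear,
\[
M_n=\sum_{w\in\{R,D\}^n} W_1\bullet\cdots\bullet W_n(p_0,\mathbf 1),\qquad W_i\in\{R_i,D_i\}.
\]
Two elementary identities drive everything.

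(a) Since $R_i(\dd x,\dd y)=Z_i(\dd x,\mathbf 1)p_i(y)\dd y$, for any kernels $X,Y$ we have
\[
(X\bullet R_i\bullet Y)(f,\mathbf 1)=(X\bullet Z_i)(f,\mathbf 1)\cdot Y(p_i,\mathbf 1),
\]
where $R_i(\cdot,\mathbf 1)=Z_i(\cdot,\mathbf 1)$ because $\int p_i=1$. So every occurrence of $R$ factorizes the chain, and it hands the test function $p_i$ to the next block.

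(b) $D_i(\cdot,\mathbf 1)=Z_i(\cdot,\mathbf 1)-Z_i(\cdot,\mathbf 1)\int p_i=0$. Hence every word ending with $D_n$ contributes zero.

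\textbf{Step 2: combinatorial bijection.} For a word ending in $R$, cut it after each $R$. This partitions $\{1,\dots,n\}$ into consecutive intervals $I=\{m,\dots,r\}$, each of the form $D_m\cdots D_{r-1}R_r$. The test function entering the block starting at $m$ is $p_{m-1}$: it is $p_0$ for the first block, and otherwise it is handed over by $R_{m-1}$.

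By (a), the block contributes $D_m\bullet\cdots\bullet D_{r-1}\bullet Z_r(p_{m-1},\mathbf 1)$. When $|I|=1$ this equals $A_r$, and when $|I|\ge2$ it equals $B_I$.

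Conversely, any collection $\mathcal I$ of disjoint intervals of length $\ge2$ determines a unique partition of this kind, obtained by completing with singletons, and hence a unique word. The word with no $D$'s gives $\prod_i A_i$. This yields \eqref{eqL: M_n expansion}.

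\textbf{Step 3: rigor, which is the main obstacle.} The products $\bullet$ between the signed measures $D_i$ are only defined as limits. I would carry out Steps 1--2 first for the mollified products $\bullet_\epsilon$ from \eqref{eq:flow-property}, with a mollification parameter $\delta$ at each junction.

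Multilinearity and the factorization (a) hold exactly at fixed $\delta$, with $p_i$ replaced by $p_i*g_\delta$ in the handed-over test function. Identity (b) is exact as well. One then defines $B_I$ as the $L^2(\mathbb P)$-limit as $\delta\downarrow0$ of the mollified expressions.

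This limit exists because each $D$-chain is a finite signed combination of $\bullet_\delta$-products of $Z$'s and $R$'s. Those products converge in $L^2$ by \cite[Proposition~2.6, Remark~2.8]{ClarkMian2026_SHFflow}, since the test functions $p_{m-1}$ and $p_i*g_\delta$ have Gaussian decay. Also $p_i*g_\delta\to p_i$ uniformly with uniform Gaussian tails.

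Products of independent factors from disjoint blocks converge in $L^1$, because each factor converges in $L^2$ and the factors are independent by \cite[Proposition~2.2(iii)]{ClarkMian2026_SHFflow}. Since the sum over $\mathcal I$ is finite, passing to the limit on both sides gives the identity almost surely. The left side converges to $M_n$ by the flow property.
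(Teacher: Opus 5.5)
Your proposal is correct and is essentially the paper's argument. Both expand $Z_i=R_i+D_i$ and use that $R_i$ is a product measure to split the chain and hand $p_i$ to the next block. The paper does this recursively, block by block, and never splits the terminal $Z_r$ of a chain, so it does not need your identity $D_n(\cdot,\mathbf 1)=0$; you instead expand all words at once and match them to interval partitions. Your Step 3, which justifies the manipulations for the mollified products $\bullet_\epsilon$ and passes to the $L^2$ limit, adds rigor that the paper leaves implicit.
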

Let us give a specific example of the expansion above:
\begin{align*}
M_4 
& = 
\prod_{i=1}^{4}A_i + B_{\{1,2\}}A_3A_4 + 
B_{\{2,3\}}A_1A_4 + B_{\{3,4\}}A_1A_2 \\ 
& \quad   + B_{\{1,2,3\}}A_4 + B_{\{2,3,4\}} A_1 
+ B_{\{1,2\}} B_{\{3,4\}} + B_{\{1,2,3,4\}}.
\end{align*}
\begin{proof}
	This is the unnormalised form of \cite{GuTsai_2026_loglogCLT}, Proposition 2.1. We include its proof here for completeness. First, due to $Z_1 = R_1+D_1$, we decompose the first block:
	\begin{align}
		M_n 
		& = (Z_1\bullet \cdots \bullet Z_n)(p_0) \notag \\ 
		& = A_1(Z_2\bullet \cdots \bullet Z_n)(p_1)
		 + (D_1\bullet Z_2 \bullet \cdots \bullet Z_n)(p_0) \label{eq:gt decomp1}
		\end{align}
		Thus in the first term we have decoupled $A_1$ from the rest. For the second term, we use $Z_2 = R_2+D_2$ to get
		\begin{align}
		& (D_1\bullet Z_2 \bullet \cdots \bullet Z_n)(p_0)\notag \\ 
		& =  (D_1\bullet R_2 \bullet Z_3 \bullet \cdots \bullet Z_n)(p_0)
		 + (D_1\bullet D_2 \bullet Z_3 \bullet \cdots \bullet Z_n)(p_0) \notag \\
		& =  (D_1\bullet Z_2)(p_0)  (Z_3 \bullet \cdots \bullet Z_n)(p_2)
		 + (D_1\bullet D_2 \bullet Z_3 \bullet \cdots \bullet Z_n)(p_0).\label{eq:gt decomp2}
	\end{align}
	Thus by repeating the same procedure on $(Z_2\bullet \cdots \bullet Z_n)(p_1)$ in \eqref{eq:gt decomp1} and $(D_1\bullet D_2 \bullet Z_3 \bullet \cdots \bullet Z_n)(p_0)$ in \eqref{eq:gt decomp2} we obtain the result.
\end{proof}

Set 
$V_i := A_i -\mathbb E [A_i]= A_i-1$ 
as the fluctuation of each $A_i$. As in \cite{GuTsai_2026_loglogCLT}, we consider the events $\Omega_i:= \{|V_i| <1/2\}$ for $i=1,...,N-\ell$ and $\Omega := \bigcap_{i=1}^n \Omega_i$ which make sure the fluctuation of each $A_i$ is not too large. As we will see in the proof later, this restriction enables us to use some basic bounds w.r.t. the negative power function (see for example \eqref{eqL: (1+u)^p >...}). 

The proof of following two Lemmas are postponed to the later part of this section.

\begin{lemma}\label{lemma: EA_i^p 1_omega_i >...} Fix $p<0$. 
	There exists $\ell_0 >0$ large enough and $C>0$ such that, when $\ell \ge \ell_0$, 
	\begin{equation}\label{eqL: E A_i^p 1_onega_i >}
		\mathbb E [A_i^p \mathds{1}_{\Omega_i}] \ge 
		1+ \frac{p(p-1)/2}{N-i} -\frac{C}{(N-i)^{3/2} }\ge 1,\quad \forall 1\le i \le n= N- \ell . 
	\end{equation}
	As a result, there exists $C(\ell ) >0$ such that, when $\ell \ge \ell_0$, 
	\begin{equation}\label{eqL: E prod A_i Omega}
	\mathbb E [\mathds{1}_{\Omega} (\prod_{i=1}^{n} A_i )^p] \ge C(\ell ) N^{p(p-1)/2}.
	\end{equation}

\end{lemma}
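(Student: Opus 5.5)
We start from a pointwise Taylor bound for the negative power on the restricted event. For $p<0$ and $|u|<1/2$,
\begin{equation}\label{eqL: (1+u)^p >...}
(1+u)^p \ge 1+pu+\tfrac{p(p-1)}{2}u^2 - C_p|u|^3 ,
\end{equation}
since the third derivative of $(1+u)^p$ is bounded on $[-1/2,1/2]$. Applying this with $u=V_i$ on $\Omega_i$ gives
\[
\mathbb E[A_i^p\mathds 1_{\Omega_i}]\ge \mathbb P(\Omega_i)+p\,\mathbb E[V_i\mathds 1_{\Omega_i}]+\tfrac{p(p-1)}{2}\mathbb E[V_i^2\mathds 1_{\Omega_i}]-C_p\mathbb E[|V_i|^3].
\]
Each term is controlled through the block moments. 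Since $\mathbb E V_i=0$, we have $\mathbb E[V_i\mathds 1_{\Omega_i}]=-\mathbb E[V_i\mathds 1_{\Omega_i^c}]$. This quantity, and also $\mathbb P(\Omega_i^c)$ and $\mathbb E[V_i^2\mathds 1_{\Omega_i^c}]$, are bounded by $C\,\mathbb E[V_i^4]$ via Markov's inequality. For the cubic term, Hölder gives $\mathbb E|V_i|^3\le (\mathbb E V_i^4)^{3/4}$.

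The required inputs are the ones announced in the introduction. Lemma~\ref{prop:E U_i^4 <} should give a fourth moment bound $\mathbb E[V_i^4]\le C/(N-i)^2$ uniformly for $i\le N-\ell$. The refined block variance of Lemma~\ref{lemma: Block variance} should give $\mathbb E[V_i^2]=\frac{1}{N-i}+O\big((N-i)^{-3/2}\big)$, or better, with the exact leading constant $1$. This constant must be exactly $1$ with no $(1+O(\rho(b)))$ factor. The heuristic check is that $V_i$ is essentially the second-chaos fluctuation on the scale $[t_{i-1},t_i]$. By \eqref{eqSHF: second moment formula} and the expansion \eqref{eqSHF: G_theta (t small)}, its variance is $\int\!\!\int G_\theta$ over a window giving $\frac{1}{\log(1/t_{i-1})}-\frac{1}{\log(1/t_i)}$ up to corrections. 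With $\log(1/t_i)=2(N-i)\log b^{-1}$ after normalisation, this yields $\approx\frac{1}{N-i}$ up to $O((N-i)^{-2})$ plus $\theta$-corrections.

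Collecting the terms gives $\mathbb E[A_i^p\mathds 1_{\Omega_i}]\ge 1+\frac{p(p-1)/2}{N-i}-C(N-i)^{-3/2}$. The final "$\ge1$" holds once $N-i\ge\ell\ge\ell_0$, since $\frac{p(p-1)}{2}>0$ and $(N-i)^{-3/2}\ll (N-i)^{-1}$.

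For \eqref{eqL: E prod A_i Omega}, the $A_i$ are functionals of the independent blocks $Z_i$ from \eqref{eq: flow, indepedent segments}, and $\Omega_i$ depends only on $Z_i$. Hence
\[
\mathbb E\Big[\mathds 1_\Omega\prod_{i=1}^n A_i^p\Big]=\prod_{i=1}^n\mathbb E[A_i^p\mathds 1_{\Omega_i}]\ge \exp\Big(\sum_{i=1}^n\log\Big(1+\tfrac{p(p-1)/2}{N-i}-\tfrac{C}{(N-i)^{3/2}}\Big)\Big).
\]
Using $\log(1+x)\ge x-x^2$ for small $x$, and the summability of $(N-i)^{-3/2}$ and $(N-i)^{-2}$, the exponent is at least $\frac{p(p-1)}{2}\sum_{k=\ell}^{N-1}\frac1k-C'$. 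This equals $\frac{p(p-1)}{2}\log N-C(\ell)$, which gives $C(\ell)N^{p(p-1)/2}$.

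The main obstacle is the variance estimate with exact leading coefficient and an error of order $(N-i)^{-3/2}$. Any multiplicative error $1+\rho(b)$ would accumulate across the harmonic sum and produce $N^{o(1)}$. I would compute $\mathbb E[A_i^2]-1$ directly from \eqref{eqSHF: second moment formula}, with $f=p_{i-1}$ and the heat kernel integrated out. I would use \eqref{eqSHF: G_theta upper} and a matching lower bound from \eqref{eqSHF: G_theta (t small)}, with the restriction $i\le N-\ell$ ensuring all times are below $t_0$. Uniformity in $\theta\in(-c_0,c_0)$ and $b\in(\bar b/2,\bar b)$ must be tracked, since the constants in the expansion depend on them.
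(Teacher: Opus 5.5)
Your proposal is correct and follows the paper's proof step for step. It uses the same third-order Taylor bound on $\{|V_i|<1/2\}$, the same Markov/H\"older control of the error terms via $\mathbb E V_i^4\le C(N-i)^{-2}$, and the exact-constant variance input of Lemma~\ref{lemma: Block variance}, and it concludes with block independence and $\log(1+x)\ge x-x^2$ summed over the harmonic series. The only slip is in your side heuristic for that variance input: $\frac{1}{\log(1/t_{i-1})}-\frac{1}{\log(1/t_i)}$ is negative and of order $(N-i)^{-2}$, whereas the $\frac{1}{N-i}$ actually comes from the factor $\log(t_i/t_{i-1})=2\log b^{-1}$ multiplying $\int_0^{t_i}G_\theta(u)\,\dd u\approx 1/\log(1/t_i)$ (to leading order a difference of $\log\log(1/t)$'s); since that estimate is proved separately as Lemma~\ref{lemma: Block variance}, this does not affect your argument.
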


\begin{lemma}\label{lemma: ratio}
	Fix $p<0$. There are $b_0>0$ small and $\ell_0\in \mathbb Z_+$ large such that for all $b\in (0,b_0]$ and $\ell \ge \ell_0$,
	\[
	\left | 
	\mathbb E \left[
	\mathds{1}_{\Omega} (M_n - \prod_{i=1}^{n} A_i ) (\prod_{i=1}^{n} A_i) ^{p-1}
	\right]
	\right |
	\le \frac{1}{2|p|} \mathbb E \left[\prod_{i=1}^{n} A_i^p \mathds{1}_{\Omega_i} \right]
	\]
\end{lemma}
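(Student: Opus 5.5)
We have to prove Lemma \ref{lemma: ratio}: with $P:=\prod_{i=1}^n A_i$ and $E:=M_n-P$, the cross term $\mathbb E[\mathds 1_\Omega E P^{p-1}]$ is at most $\frac{1}{2|p|}\mathbb E[\prod_i A_i^p\mathds 1_{\Omega_i}]$. Since $\mathds 1_\Omega=\prod_i\mathds 1_{\Omega_i}$, the right-hand side is just $\frac{1}{2|p|}\mathbb E[\mathds 1_\Omega P^p]$.

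Expand $E$ via \eqref{eqL: M_n expansion} and treat each $\mathcal I\in\mathfrak I_n$ separately. The term for $\mathcal I$ is
\[
T_{\mathcal I}:=\Bigl(\prod_{I\in\mathcal I}B_I\Bigr)\prod_{i\notin U(\mathcal I)}A_i .
\]
Multiplying by $P^{p-1}$ and $\mathds 1_\Omega$ gives
\[
\mathds 1_\Omega T_{\mathcal I}P^{p-1}=\prod_{i\notin U(\mathcal I)}\bigl(A_i^{p}\mathds 1_{\Omega_i}\bigr)\prod_{I\in\mathcal I}\Bigl(B_I\prod_{i\in I}A_i^{p-1}\mathds 1_{\Omega_i}\Bigr).
\]
The blocks $Z_1,\dots,Z_n$ are independent by \cite[Proposition 2.2(iii)]{ClarkMian2026_SHFflow}. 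For disjoint intervals $I$, the factors $B_I$ and $A_i$, $i\in I$, are measurable with respect to $\{Z_i\}_{i\in I}$, so the expectation factorizes:
\[
\mathbb E[\mathds 1_\Omega T_{\mathcal I}P^{p-1}]=\prod_{i\notin U}\mathbb E[A_i^p\mathds 1_{\Omega_i}]\prod_{I\in\mathcal I}\mathbb E\Bigl[B_I\prod_{i\in I}A_i^{p-1}\mathds 1_{\Omega_i}\Bigr].
\]
Dividing by $\mathbb E[\mathds 1_\Omega P^p]=\prod_i\mathbb E[A_i^p\mathds 1_{\Omega_i}]$ gives
\[
\frac{|\mathbb E[\mathds 1_\Omega T_{\mathcal I}P^{p-1}]|}{\mathbb E[\mathds 1_\Omega P^p]}\le \prod_{I\in\mathcal I}\frac{|\mathbb E[B_I\prod_{i\in I}A_i^{p-1}\mathds 1_{\Omega_i}]|}{\prod_{i\in I}\mathbb E[A_i^p\mathds 1_{\Omega_i}]}\le\prod_{I\in\mathcal I}\beta_I ,
\]
where $\beta_I:=\bigl|\mathbb E[B_I\prod_{i\in I}A_i^{p-1}\mathds 1_{\Omega_i}]\bigr|$. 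The last step uses $\mathbb E[A_i^p\mathds 1_{\Omega_i}]\ge1$ from Lemma \ref{lemma: EA_i^p 1_omega_i >...}.

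The main work is a bound of the form $\beta_I\le (C\rho(b))^{|I|-1}$, with $\rho(b)\to0$ as $b\downarrow0$, uniformly in $i\le N-\ell$. The first idea is Cauchy--Schwarz: on $\Omega_i$ we have $A_i\in(1/2,3/2)$, so $A_i^{p-1}\le 2^{1-p}$ and $\beta_I\le 2^{(1-p)|I|}\mathbb E[B_I^2]^{1/2}$. For the second moment of $B_I=D_m\bullet\cdots\bullet D_{r-1}\bullet Z_r(p_{m-1},\mathbf 1)$, I would use the estimates of \cite{GuTsai_2026_loglogCLT}. Each $D_j$ has mean zero and only correlates across the scale gap $t_{j-1}\ll t_j$ through a factor of order $b$, or at worst $b^{2}$ after heat-kernel smoothing. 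Using the second moment formula \eqref{eqSHF: second moment formula} and the block variance control of Lemma \ref{lemma: Block variance}, this should give $\mathbb E[B_I^2]\le (C b^{\kappa}/(N-m))^{|I|-1}$ for some $\kappa>0$.

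The constant $2^{(1-p)|I|}$ loses to nothing here, since $|I|\ge2$ gives at least one small factor per extra block. Choosing $b_0$ so small that $2^{2(1-p)}C b_0^{\kappa/2}=:\gamma$ is tiny absorbs it into $\beta_I\le\gamma^{|I|-1}$.

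Summation over $\mathcal I$ is then a combinatorial count. Let $a_n:=\sum_{\mathcal I}\prod_{I}\gamma^{|I|-1}$, the sum over collections of disjoint intervals of length at least $2$ in $\{1,\dots,n\}$. Conditioning on the first point decomposes $a_n$ as a renewal recursion. Its generating function gives $1+a_n\le(1+c\gamma)^n$, which is not uniform in $n\sim\log\epsilon^{-1}$.

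This is the main obstacle. The fix I would try is to keep the decay $1/(N-i)$ coming from the block variance $\mathbb E[V_i^2]\approx 1/(N-i)$, so that $\beta_I\le \gamma^{|I|-1}/(N-m)$. Then
\[
\sum_{\mathcal I}\prod_I\beta_I\le\prod_{m=1}^{n}\Bigl(1+\sum_{k\ge1}\frac{\gamma^{k}}{N-m}\Bigr)-1\le \exp\Bigl(2\gamma\sum_{j\ge\ell}\frac1j\Bigr)-1,
\]
which still diverges logarithmically in $N$.

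So a finer cancellation is genuinely needed. Expanding $A_i^{p-1}=1+(p-1)V_i+O(V_i^2)$ on $\Omega_i$, the mean-zero property of $D_m$ kills the leading term. The surviving terms involve $\mathbb E[D_m\cdots V_m]$, whose size is of order $b^\kappa(N-m)^{-1}\cdot(N-m)^{-1/2}$, using $\mathbb E[V_i^4]$ from Lemma \ref{prop:E U_i^4 <}. This gives summable weights $(N-m)^{-3/2}$, and hence
\[
\sum_{\mathcal I}\prod_I\beta_I\le \exp\bigl(C\gamma\,\ell^{-1/2}\bigr)-1\le\frac{1}{2|p|}
\]
for $b\le b_0$ and $\ell\ge\ell_0$. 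The indicator $\mathds 1_{\Omega_i}$ breaks exact cancellation, but its complement has probability bounded by Chebyshev via the fourth moment, at most $C(N-i)^{-2}$, so it contributes a summable error.

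I expect this cancellation estimate, namely the second-order expansion controlled uniformly at the correct power of $N-m$, to be the hard step.
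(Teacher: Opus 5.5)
Your skeleton matches the paper's proof. You expand via \eqref{eqL: M_n expansion}, factorize over blocks by independence, and divide by $\prod_i\mathbb E[A_i^p\mathds 1_{\Omega_i}]\ge1$. This reduces the claim to bounding $\sum_{\mathcal I}\prod_{I\in\mathcal I}w_I$ with $w_I:=\bigl|\mathbb E\bigl[B_I\prod_{i\in I}A_i^{p-1}\mathds 1_{\Omega_i}\bigr]\bigr|$. You correctly see that the crude bound $w_I\lesssim\sqrt{\mathbb E B_I^2}$ gives a logarithmically divergent sum, and that an extra $(N-\cdot)^{-1/2}$ must come from $\mathbb E B_I=0$.

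The gap is the quantitative input on $B_I$. You postulate $\mathbb E[B_I^2]\le (Cb^{\kappa}/(N-m))^{|I|-1}$, which does not close the argument:
\begin{itemize}
\item For $|I|=2$ it gives only $\sqrt{\mathbb E B_I^2}\lesssim (N-m)^{-1/2}$.
\item Even after the cancellation, the weight is then $(N-m)^{-1/2}\cdot(N-m)^{-1/2}=(N-m)^{-1}$.
\item So $\sum_{m\le N-\ell}(N-m)^{-1}\approx\log(N/\ell)$ still diverges as $\epsilon\to0$. This is exactly the obstruction you were trying to remove.
\end{itemize}
Your asserted size $(N-m)^{-1}\cdot(N-m)^{-1/2}$ tacitly needs $\mathbb E B_I^2\lesssim (N-\cdot)^{-2}$ for every $|I|\ge2$. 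That decay cannot be taken from the block variance $\mathbb E V_i^2\approx 1/(N-i)$, which controls the $A_i^{p-1}$ factors, not $B_I$.

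The required input is \eqref{eqL: E B_I^2<} in Lemma \ref{prop:E U_i^4 <}, the same lemma you cite for $\mathbb E V_i^4$:
\[
\mathbb E B_I^2\le \frac{C^{|I|}}{(\log b^{-1})^{|I|-1}(N-r)^{2}}.
\]
It has two full powers of $(N-r)^{-1}$ regardless of $|I|$. Its smallness in $b$ is only logarithmic, not $b^\kappa$, but any factor tending to $0$ suffices to absorb $C^{|I|}$.

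With that bound, the step you expect to be hard takes a few lines, with no Taylor expansion and no separate Chebyshev error for $\Omega_i^c$. Put $h_i:=A_i^{p-1}\mathds 1_{\Omega_i}-1$. Then $|h_i|\le C_p|V_i|$ everywhere, since on $\Omega_i^c$ one has $h_i=-1$ and $|V_i|\ge 1/2$. Expand $\prod_{i\in I}(1+h_i)$. The empty-set term is killed by $\mathbb E B_I=0$. Cauchy--Schwarz and independence of the $h_i$ then give
\[
w_I\le\sqrt{\mathbb E B_I^2}\,\Bigl[\prod_{i\in I}\bigl(1+\sqrt{\mathbb E h_i^2}\bigr)-1\Bigr]\le\frac{(1+C)^{|I|}}{(\log b^{-1})^{(|I|-1)/2}\,(N-r)^{3/2}},
\]
since $\mathbb E h_i^2\le C\,\mathbb E V_i^2\le C/(N-i)\le C/(N-r)$ for $i\le r$ by Lemma \ref{lemma: Block variance}. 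Summing over the right endpoint $r\le N-\ell$ and the length gives $\sum_I w_I\le C(\ell\log b^{-1})^{-1/2}$. Hence $\sum_{\mathcal I}\prod_I w_I\le\exp(\sum_I w_I)-1\le\frac{1}{2|p|}$ for $b\le b_0$ and $\ell\ge\ell_0$.
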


Now we are ready for the proof of Proposition \ref{propo: time delay SHF}.
\begin{proof}[Proof of Proposition \ref{propo: time delay SHF}]
	We have 
	\begin{align}\label{eq: neg-1}
	\mathbb E [ M_n^p] \ge 
	\mathbb E [\mathds{1}_{\Omega} M_n^p]
	\ge  \mathbb E [\mathds{1}_{\Omega} (\prod_{i=1}^n A_i )^p] + 
	p \mathbb E \left[
	\mathds{1}_{\Omega} (M_n -\prod_{i=1}^n A_i )(\prod_{i=1}^n A_i)^{p-1}
	\right]
	\end{align}
	where we use $x^p\ge y^p +p(x-y)y^{p-1} $ for any $p<0$, $x\ge 0$ and $y>0$. 
	
	By applying Lemma \ref{lemma: EA_i^p 1_omega_i >...} and Lemma \ref{lemma: ratio} to \eqref{eq: neg-1}, we obtain
	\begin{align*}
		\bbE [M_n^p]
		\ge 
		\frac{1}{2}
		\bbE \big[ \ind_{\gO} (\prod_{i=1}^n A_i )^p \big]
		\ge C_\ell  N^{\frac{p(p-1)}{2}}
		\ge C_\ell  \big( \log \frac{1}{\eps} \big)^{\frac{p(p-1)}{2}}
	\end{align*}
	where the last inequality above follows from $N = \big\lfloor \frac{\log \frac{1}{\eps}}{\log \frac{1}{b}} \big\rfloor$. Finally, notice that 
	$(M_n)_n$ is a nonnegative martingale by Lemma \ref{lemma: MG of SHF}. Therefore, for any $p<0$, we have $\bbE[M_N^p]\ge \bbE [M_n^p]$, which finishes the proof.
\end{proof}

In the rest of this subsection, we prove Lemma \ref{lemma: EA_i^p 1_omega_i >...} and Lemma \ref{lemma: ratio}. To start with, we first present a series of moment estimates. The following lemma is from \cite{GuTsai_2026_loglogCLT}. It concerns moment estimates of $V_i$ and $B_I$. Estimate \eqref{eqL: E U_i^4 <} is the case of the fourth moment in \cite[(B.22)]{GuTsai_2026_loglogCLT}, and \eqref{eqL: E B_I^2<} is \cite[(3.40)]{GuTsai_2026_loglogCLT}. 
\begin{lemma}\label{prop:E U_i^4 <}
	There are $C$, $b_0>0$ small enough, and $\ell_0\in \mathbb Z_+$ such that, for all $b\in (0,b_0)$ and $\ell\ge\ell_0$,
	\begin{equation}\label{eqL: E U_i^4 <}
		\E V_i^4\le \frac{C}{(N-i)^2}.
	\end{equation}
	Moreover, for every interval $I=\{m,\cdots,r\}\subset\{1,\cdots,n\}$ with $|I|\ge2$,
	\begin{equation}\label{eqL: E B_I^2<}
		\E B_I^2
		\le
		\frac{C^{|I|}}
		{(\log b^{-1} )^{|I|-1}(N-r)^2}.
	\end{equation}
\end{lemma}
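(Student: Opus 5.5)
The two estimates are quoted from \cite{GuTsai_2026_loglogCLT}, so the plan is to check that their hypotheses and normalisations match ours rather than to rederive them from scratch. The only changes are normalisation ($A_i=Z_i(p_{i-1},\mathbf 1)$ has mean $1$, since $p_{i-1}$ is a probability density and $\mathbb E[Z_i(\td x,\td y)]$ is the heat kernel) and the time delay at $\epsilon^2$.

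For \eqref{eqL: E U_i^4 <}, I would start from the second moment formula \eqref{eqSHF: second moment formula} applied to the block $Z_i=\mathscr Z_{t_{i-1},t_i}^\theta$ with initial datum $p_{i-1}=g_{t_{i-1}}$. By the scaling invariance of SHF, $A_i$ has the law of a unit-time SHF started from a heat kernel of variance $b^{2}$, with shifted disorder parameter $\theta+\log t_i$. Here $\log t_i=-2(N-i)\log b^{-1}$, so $\log t_i^{-1}$ is comparable to $N-i$. Using the small-time expansion \eqref{eqSHF: G_theta (t small)}, the variance is of order $\int G_{\theta'}\approx 1/\log t_i^{-1}\asymp 1/(N-i)$. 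This is valid exactly because $\ell$ is large, so that $t_i\le t_n=b^{2\ell}$ is small.

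For the fourth moment I would invoke the fourth-moment collision-diagram formula (as in \cite{GQT_2021_moments,CSZ2019_thirdmoment}). Each diagram with at least one collision contributes a factor $1/(N-i)$ per disjoint pair of collision chains. The centered fourth moment requires two independent pair interactions, giving $O((N-i)^{-2})$, uniformly in $b\le b_0$. This is precisely \cite[(B.22)]{GuTsai_2026_loglogCLT} for $k=4$.

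For \eqref{eqL: E B_I^2<}, I would expand $B_I=D_m\bullet\cdots\bullet D_{r-1}\bullet Z_r(p_{m-1},\mathbf 1)$ and compute the second moment. Independence of the blocks (the flow property, \cite[Proposition 2.2]{ClarkMian2026_SHFflow}) factorises it into a product of two-point kernels, one per block. Each $D_j=Z_j-R_j$ is a centered-in-endpoint quantity: it kills the spatially constant mode. Its two-point kernel therefore forces the pair of endpoints to have interacted at scale $t_j$ while decorrelating the spatial average. Propagating through the next block, whose time scale is $b^{-2}$ times larger, yields a gain of $1/\log b^{-1}$ per link. The last block $Z_r$ carries the variance factor $(N-r)^{-1}$, and one more $(N-r)^{-1}$ comes from the requirement that the collision straddle the boundary. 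Collecting these gives $C^{|I|}(\log b^{-1})^{-(|I|-1)}(N-r)^{-2}$.

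The main obstacle is the per-link gain $1/\log b^{-1}$, which requires a careful bound on the overlap kernel $G_\theta$ integrated across the geometric scale separation. Here I would defer to \cite[(3.40)]{GuTsai_2026_loglogCLT}, noting that their setting (time delay $\epsilon^2$, $t_N=1$, $\theta$ in a compact interval) coincides with ours after the scaling reduction preceding Proposition~\ref{propo: time delay SHF}.
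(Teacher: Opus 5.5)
Your proposal takes essentially the same approach as the paper: its proof of this lemma is just a citation of \cite[(B.22)]{GuTsai_2026_loglogCLT} (fourth-moment case) for the bound on $\E V_i^4$ and of \cite[(3.40)]{GuTsai_2026_loglogCLT} for the bound on $\E B_I^2$, which are the two inputs you defer to. One small slip in your motivating heuristic, not needed for the argument: $\E V_i^2\approx 2\log b^{-1}/\log t_i^{-1}=1/(N-i)$, where the factor $\log(t_i/t_{i-1})=2\log b^{-1}$ comes from integrating $1/(t_{i-1}+s)$; without it, $\int G_{\theta'}\approx 1/\log t_i^{-1}$ matches $1/(N-i)$ only up to $b$-dependent constants, so it would not give uniformity in $b$.
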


Next, we give a second moment estimate that is used to get the correct exponent $p(p-1)/2$ in \eqref{eqL: E Z_epsilon^2 ,1 ^p >}.
\begin{lemma}\label{lemma: Block variance}
	For every $c_0>0$, there are $C=C(c_0)>0$, $b_0\in(0,1/2)$ and $\ell_0\in \mathbb Z_+$ such that, for all $|\theta|\le c_0$, , $0<b\le b_0$, $\ell\ge\ell_0$, $1\le i\le N-\ell$, and sufficiently small $\epsilon$,
	\begin{equation}\label{eqL: E U_i^2 =...}
		\Big|\E V_i^2 - \frac{1}{N-i}\Big|
		\le \frac{C}{(N-i)^2 }.
	\end{equation}
\end{lemma}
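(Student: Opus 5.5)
The plan is to compute $\mathbb E V_i^2=\var(A_i)$ exactly from the covariance formula \eqref{eqSHF: second moment formula}, and then expand it using the small-time asymptotics \eqref{eqSHF: G_theta (t small)} of $G_\theta$. Here $A_i=Z_i(p_{i-1},\mathbf 1)=\mathscr Z^\theta_{t_{i-1},t_i}(g_{t_{i-1}},\mathbf 1)$.

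\emph{Exact formula.} Integrating \eqref{eqSHF: second moment formula} against $g_{t_{i-1}}(x)g_{t_{i-1}}(x')$ with $y,y'$ free, the $y,y'$ integrals of $g_{t_2-b}(y'-y)$ times the centre-of-mass kernel produce a constant (after fixing the normalization of the first moment, so that $\mathbb E A_i=1$). The $x,x'$ integrals give $\int g_{t_{i-1}}(x)g_{t_{i-1}}(x')g_{a-t_{i-1}}(x'-x)\,\td x\,\td x'=g_{a+t_{i-1}}(0)=\frac{1}{2\pi(a+t_{i-1})}$, since the difference $x'-x$ has variance $2t_{i-1}$ and the time arguments combine accordingly. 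Up to a normalizing constant $c$ we thus expect
\[
\mathbb E V_i^2=c\int_{t_{i-1}<a<b<t_i}\frac{G_\theta(b-a)}{a+t_{i-1}}\,\td a\,\td b .
\]
I would fix $c$ by checking against the known global asymptotic \eqref{eqSHF: local second moments}.

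\emph{Asymptotic evaluation.} Substitute $a=t_{i-1}u$. The range is $u\in(1,b^{-2})$, and $b-a$ runs over $(0,t_i-a)$. Integrating out $b$ gives $\int_0^{t_i-a}G_\theta(s)\,\td s=\frac{1}{\log\frac{1}{t_i-a}}\bigl(1+O(1/\log\frac1{t_i-a})\bigr)$, which follows from \eqref{eqSHF: G_theta (t small)} because $\int_0^T\frac{\td s}{s(\log s^{-1})^2}=\frac{1}{\log T^{-1}}$. The time $t_i$ is small precisely because $i\le N-\ell$, and $\log\frac1{t_i}=2(N-i)\log b^{-1}$. Hence for $a$ not within $O(t_i)$ of $t_i$ we get $\log\frac1{t_i-a}=2(N-i)\log b^{-1}+O(1)$. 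The $a$-integral of $\frac{1}{a+t_{i-1}}$ over $(t_{i-1},t_i)$ equals $\log\frac{1+b^{-2}}{2}=2\log b^{-1}-\log2+O(b^2)$. The leading term is therefore
\[
\frac{2\log b^{-1}-\log 2}{2(N-i)\log b^{-1}}=\frac{1}{N-i}-\frac{\log 2}{2(N-i)\log b^{-1}} .
\]

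\emph{Main obstacle.} The claimed error is $C/(N-i)^2$ with $C$ independent of $b$, but the $-\log2$ correction above is only $O\bigl(1/((N-i)\log b^{-1})\bigr)$. Either this term cancels against an exact constant, or the precise normalization of $c$ (via the $\frac{\pi}{2}$ prefactor and the centre-of-mass heat kernel) shifts it. Getting these constants exactly right is the step I expect to be hardest. The honest route is to carry all $O(1)$ terms precisely:
\begin{enumerate}
\item the contribution of $a$ close to $t_i$, where $\log\frac1{t_i-a}$ differs;
\item the $1+2\theta/\log$ correction in $G_\theta$, which gives a relative error $O(1/(N-i))$, uniformly for $|\theta|\le c_0$;
\item the boundary constant in the $a$-integral.
\end{enumerate}
If a residual $O\bigl(1/((N-i)\log b^{-1})\bigr)$ survives, I would reinterpret the bound with $b$ fixed, since $N-i\ge\ell\ge\ell_0(b)$. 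I would then check that this still suffices in Lemma~\ref{lemma: EA_i^p 1_omega_i >...}: a $1/(N-i)$ error of size $\kappa_b$ sums to a multiplicative factor $N^{O(\kappa_b)}$, so it would have to vanish exactly. This makes the exact evaluation of the constant, in particular verifying that $\int_{t_{i-1}}^{t_i}\frac{\td a}{a+t_{i-1}}$ is matched by the $\log\frac{1}{t_i}$ versus $\log\frac1{t_{i-1}}$ bookkeeping, the crux. I would redo the computation with the variable $\log\frac1a$, whose increment over the block is exactly $2\log b^{-1}$, to see the cancellation.
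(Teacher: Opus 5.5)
Your outline (variance from the covariance kernel, then the small-time expansion of $G_\theta$) is the right one. However, the proposal stops exactly at the step that decides the lemma, and as written that step goes the wrong way. For your integrand $\frac{1}{a+t_{i-1}}$ the arithmetic is correct and nothing cancels: $\int_{t_{i-1}}^{t_i-u}\frac{\mathrm{d}a}{a+t_{i-1}}=\log\frac{t_i+t_{i-1}-u}{2t_{i-1}}=2\log b^{-1}-\log 2+\log\bigl(1+b^2-\frac{u}{t_i}\bigr)$. So the coefficient of $\frac{1}{N-i}$ would be $1-\frac{\log 2}{2\log b^{-1}}+O(b^2)$. That is an $O(\frac{1}{N-i})$ discrepancy, which contradicts \eqref{eqL: E U_i^2 =...} and, as you note, would change the exponent in Lemma~\ref{lemma: EA_i^p 1_omega_i >...}. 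Neither fallback helps. The estimate \eqref{eqSHF: local second moments} holds only up to an unspecified factor $C$, so it cannot calibrate $c$, while the lemma needs the coefficient to be exactly $1$. Fixing $b$ does not produce the stated bound either.

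The missing idea is the consistency of the block profiles. The factor $\frac{1}{a+t_{i-1}}$ comes from inserting $p_{i-1}=g_{t_{i-1}}$ into the CSZ-normalised kernel \eqref{eqSHF: second moment formula}, whose mean kernel is $\frac12 g_{(t_2-t_1)/2}$. There $\mathbb E Z_i$ carries $g_{t_{i-1}}$ to a multiple of $g_{(t_i+t_{i-1})/2}$ rather than of $p_i$, so consecutive blocks do not match. The right choice is to take each $p_i$ to be the mean flow, up to time $t_i$, of the point mass started at time $0$; this is the choice implicit in the block decomposition. In the paper's normalisation this is $p_i=g_{t_i}$, with $4\pi\int g_{t_{i-1}+s}(y)^2\,\mathrm{d}y=\frac{1}{t_{i-1}+s}$. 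With it, the two-replica overlap factor is exactly the reciprocal of the absolute time $a=t_{i-1}+s$, with prefactor exactly $1$ once $\mathbb E A_i=1$.

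Integrating over the start time at fixed gap $u$ then gives $\int_{t_{i-1}}^{t_i-u}\frac{\mathrm{d}a}{a}=2\log b^{-1}+\log\bigl(1-\frac{u}{t_i}\bigr)$, with no $\log 2$. Hence $\mathbb E V_i^2=\int_0^{t_i(1-b^2)}G_\theta(u)\log\frac{t_i-u}{b^2t_i}\,\mathrm{d}u$. The first piece is $2\log b^{-1}\int_0^{t_i(1-b^2)}G_\theta=\frac{1}{N-i}+O\bigl((N-i)^{-2}\bigr)$ by \eqref{eqSHF: G_theta (t small)}, since $\log\frac{1}{t_i(1-b^2)}=2(N-i)\log b^{-1}+O(b^2)$. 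The second piece, after substituting $u=t_iv$, is at most $\frac{3}{(\log t_i^{-1})^2}\int_0^1\frac{|\log(1-v)|}{v}\,\mathrm{d}v$. This also disposes of your concern about $a$ near $t_i$.
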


\begin{proof}
	By the second-moment formula for SHF, we have 
	\[
	\begin{aligned}
	\mathbb E [V_i^2] = &
4\pi \int _{0<s<s'<t_i-t_{i-1} } G_\theta(s'-s) \left[
\int_{\mathbb R^2} g_{t_{i-1} + s} (y)^2 \td y
\right] \td s \td s'\\
=& \int _{0<s<s'<t_i-t_{i-1} } G_\theta (s'-s) \frac{1}{t_{i-1} + s} \td s \td s'\\
=& \int _0^{t_i-t_i b^2} G_\theta (u) \log \left(\frac{t_i-u}{ b^2 t_i }\right) \td u
	\end{aligned}
	\]
where we have changed variable $u:= s'-s$, complete the integral over $s$ and recall that $t_{i-1} = b^2 t_i$. 	
By \eqref{eqSHF: G_theta (t small)}, the asymptotic expansion of $G_\theta$, we have, for $\theta \in (-c_0,c_0)$
\begin{align}
	\mathbb E [V_i^2] 
	& = 
\int_0^{t_i(1-b^2)} \frac{1}{u |\log u|^2}
\left(
1+O_{c_0}\Big(\frac{2\theta}{\log u^{-1} }\Big)
\right) 
\log \left( \frac{t_i-u}{b^2 t_i} \right) \td u \notag \\
	& =
	2\int_0^{t_i(1-b^2)} \frac{1}{u |\log u|^2}
\left(
1+O_{c_0}\Big(\frac{2\theta}{\log u^{-1} }\Big) \right) 
\log \frac{1}{b} \,\td u \label{int1} \\
& \quad +
\int_0^{t_i(1-b^2)} \frac{1}{u |\log u|^2}
\left(
1+O_{c_0 }\Big(\frac{2\theta}{\log u^{-1} }\Big) \right) 
\log \big( 1-\frac{u}{t_i} \big) \, \td u \label{int2} 
\end{align}
By taking $\ell_0$ sufficiently large, $u\le t_i=b^{2(N-i)}\le b^{2l}$ can be made arbitrarily small uniformly for $\ell >\ell_0$ and $0\le i\le N-\ell$. Hence the replacement using \eqref{eqSHF: G_theta (t small)} is justified. Due to the same reason, we can ensure  
\begin{align}\label{est3}
1+O_{c_0}\Big(\frac{2\theta}{\log u^{-1} }\Big) \le 3,\quad \forall u\in [0,t_i(1-b^2) ].
\end{align}
We estimate \eqref{int1} as following: 
\begin{equation}\label{eqL: mid I}
	\eqref{int1} = 2\log \frac{1}{b} \left[
	\frac{1}{|\log (t_i(1-b^2) )|} 
	+ O \Big( \frac{1}{ |\log (t_i(1-b^2) )|^{2}} \Big)
	\right].
\end{equation}
From $t_i= b^{2(N-i)}$,
\begin{align*}
	\frac{1}{|\log (t_i(1-b^2) )|} 
	= \frac{1}{\log \frac{1}{t_i} + \log \frac{1}{1-b^2}}
	= \frac{1}{ 2(N-i) \log \frac{1}{b} + \log \frac{1}{1-b^2}}.
\end{align*}
Plugging it  into \eqref{eqL: mid I} we finish the estimate of \eqref{int1}. For \eqref{int2}, by \eqref{est3} we have 
\[
|\eqref{int2}|\le 3 \int_0^{t_i (1-b^2) } \frac{| \log (1-u/t_i) | }{u|\log u|^2} \td u\le \frac{3}{| \log (t_i)|^2} \int_0^{1-b^2 } \frac{|\log (1-v) |}{v} \td v\le \frac{C}{|N-i|^2}.
\]
\end{proof}

\begin{remark}\label{rmk:early segment}
We discard the segments after $n = N-l$ because Lemma \ref{prop:E U_i^4 <} and Lemma \ref{lemma: Block variance} are true only when $\ell_0$ is sufficiently large. For example, \eqref{eqSHF: G_theta (t small)} is only valid for sufficiently large $\ell$. 
\end{remark}

Now we close the subsection by the proof of Lemma \ref{lemma: EA_i^p 1_omega_i >...} and Lemma \ref{lemma: ratio}.

\begin{proof}[Proof of Lemma \ref{lemma: EA_i^p 1_omega_i >...}]
We will use the following basic inequality: fix any $p<0$, there exists $C_p>0$ such that
\begin{equation}\label{eqL: (1+u)^p >...}
(1+u)^p \ge 1+pu +\frac{p(p-1)}{2}u^2 -C_p |u^3|,\quad \forall |u|<\frac{1}{2}.
\end{equation}
By taking $u=V_i$ on the event $\Omega _i$ and then multiplying by $\mathds{1}_{\Omega _i }$, we have 
\[
\begin{aligned}
\mathbb E [A_i^p \mathds{1}_{\Omega_i}] = &\mathbb E [(1+V_i)^p \mathds{1}_{\Omega_i}]\\
\ge & \mathbb P(\Omega_i) + p\mathbb E [V_i \mathds{1}_{\Omega_i}] +\frac{p(p-1)}{2} \mathbb E [V_i^2  \mathds{1}_{\Omega_i}] -C_p \mathbb E [|  V_i  | ^3 \mathds{1}_{\Omega_i}].  
\end{aligned}
\]
We estimate the r.h.s term by term. For the first term,
\[
\mathbb P(\Omega_i) =1-\mathbb P (|V_i| >1/2)\ge 1-16 \mathbb E [|V_i|^4] \ge 1-C (N-i)^{-2}
\] 
where the last inequality is due to \eqref{eqL: E U_i^4 <}. 

For the second term, by $\mathbb E V_i =0$ and \eqref{eqL: E U_i^4 <}, we have 
\begin{equation}\label{eqL: mid 1}
	| \mathbb E [V_i \mathds{1}_{\Omega_i} ] | = 
	| \mathbb E [V_i \mathds{1}_{\Omega _i ^c}] |\le 
\mathbb E [|V_i|\mathds{1}_{|V_i| \ge 1/2}]\le 8 \mathbb E [V_i^4]\le \frac{C}{(N-i)^2 },
\end{equation}
For the third term, 
\[
\mathbb E [V_i^2 \mathds{1}_{\Omega_i}] = \mathbb E [V_i^2] - \mathbb E [V_i^2 \mathds{1}_{\Omega_i^c}]
\]
Similar to \eqref{eqL: mid 1}, we get $|\mathbb E [V_i^2 \mathds{1}_{\Omega_i^c}]|\le C/(N-i)^2$. Together with \eqref{eqL: E U_i^2 =...}, we have 
\[
\mathbb E [V_i^2 \mathds{1}_{\Omega_i}] \ge  \frac{1}{N-i} - \frac{C}{(N-i)^2}.
\]
For the fourth term, we apply H\"{o}lder's inequality and by \eqref{eqL: E U_i^4 <}, we obtain
\[
\begin{aligned}
| \mathbb E [V_i^3 \mathds{1}_{\Omega _i}] | 
\le \big( \bbE V^4_i \big)^\frac{3}{4}
\le \frac{C}{(N-i)^{\frac{3}{2}}}.
\end{aligned}
\]
Summing it up gives \eqref{eqL: E A_i^p 1_onega_i >}. We also mention that, by the lower bound in \eqref{eqL: E A_i^p 1_onega_i >}, we can take sufficiently large $\ell $ so that $\mathbb E [A_i^p \mathds{1}_{\Omega_i}] \ge 1$. 

Finally, we prove \eqref{eqL: E prod A_i Omega}. Take $\ell _0 $ large so that when $\ell \ge \ell_0$, we have  
$$\frac{p(p-1)}{2(N-i)} -\frac{C}{(N-i) ^{3/2}} \in (0,\frac{1}{2})$$ 
for all $1\le i \le N- \ell$. Then by \eqref{eqL: E A_i^p 1_onega_i >}, independence of $A_i$ and $\log (1+x)\ge x-x^2$ for all $x\in (0,\frac{1}{2})$, we have
\[
\begin{aligned}
	\log \mathbb E \left [\mathds{1}_{\Omega} (\prod_{i=1}^n A_i )^p \right ] = &\log \prod_{i=1}^n \mathbb E \left[  A_i^p\mathds{1}_{\Omega_i} \right]=
\sum_{i=1}^n \log \mathbb E \left[ \mathds{1}_{\Omega_i} A_i^p \right]\\
\ge & 
\sum _{i=1}^{n} \frac{p(p-1)}{2 (N-i)} - \sum _{i=1}^{n} \frac{C}{(N-i)^{\frac{3}{2}}} \\ 
\ge &
\sum _{i=1}^{n} \frac{p(p-1)}{2 (N-i)} -C 
\ge 
\frac{p(p-1)}{2} \log \frac{N-1}{\ell} -C 
\end{aligned}
\]
which proves \eqref{eqL: E prod A_i Omega}. 

\end{proof}

\begin{proof}[Proof of Lemma \ref{lemma: ratio}]
	From the decomposition \eqref{eqL: M_n expansion} of $M_n$ and by the fact that the SHF has independent increments, we get 
	\begin{equation}\label{eqL: negative moments, ratio by omega}
	\begin{aligned}
	\frac{\left | 
		\mathbb E \left[
		\mathds{1}_{\Omega} (M_n - \prod_{i=1}^n A_i ) (\prod_{i=1}^n A_i) ^{p-1}
		\right] 
		\right |}
	{\bbE [\ind_\gO \prod_{i=1}^n  A_i^p] }
	= & 
	\frac{\left | 
		\sum_{\substack{\mathcal I \in \mathfrak I_n  } } 
		\mathbb E [\prod_{I \in \mathcal I} B_I \prod_{i\in I } A_i^{p-1} \mathds{1}_{\Omega _i} ] \mathbb E [\prod_{i\notin U(\mathcal I)} A_i^p \mathds{1}_{\Omega _i} ]
		\right |}
	{\prod_{i=1}^n \bbE [\ind_\gO  A_i^p] }
	\\
	\le & 
	\sum_{\substack{\mathcal I \in \mathfrak I_n  } } \prod_{I \in \mathcal I}
	\frac{
		\Big| \bbE \Big[ B_I \prod_{i\in I} A_i^{p-1} \ind_{\gO_i} \Big]\Big|
	}{
		\prod_{i\in I} \bbE [\ind_{\gO_i}  A_i^p] 
	} \\ 
	\le &
	\sum_{\substack{\mathcal I \in \mathfrak I_n  } } \prod_{I \in \mathcal I}
	\left| \mathbb E \left[ B_I \prod_{i\in I} A_i^{p-1} \mathds{1}_{\Omega_i} \right] \right |
	=:
	\sum_{\substack{\mathcal I \in \mathfrak I_n  }} \prod_{I \in \mathcal I}
	w_I,
\end{aligned}
	\end{equation}
	where in the last inequality we use $\bbE[A_i^p \ind_{\gO_i}]\ge 1$ from \eqref{eqL: E A_i^p 1_onega_i >}. 
Recall we have the decomposition $A_i=V_i +1$. Set 
$$h_i:= (1+V_i)^{p-1} \mathds{1}_{\Omega_i} -1,$$ 
then $|h_i|\le C_p |V_i|$ for some $C_p>0$ due to our definition $\Omega_i := \{|V_i| <1/2\}$. Indeed, if $|V_i| \ge 1/2$, $|h_i| =1 \le 2|V_i|$. If $|V_i| \le 1/2$, by expanding $(1+V_i)^{p-1}$, we have
$$|h_i| \le \max_{|x|\le 1/2} (1-p) (1+x)^{p-2} |V_i| = (1 - p ) 2^{2-p} |V_i|.$$ 

For any interval $I =\{m,m+1,\cdots,r\}\subset \{1,\cdots,n\}$ with $|I| \ge 2$, we have 
\[
\begin{aligned}
w_I= \left 	|\mathbb E \left[ B_I \prod_{i\in I} A_i^{p-1} \mathds{1}_{\Omega_i} \right] \right |
= \left |
\mathbb E \left[
B_I \prod_{i\in I} (h_i +1)
\right] \right | 
= 
\left | 
\sum_{ \substack{J \subset I \\ J\neq\emptyset }} \mathbb E \left[ B_I 
\prod_{i\in J} h_i
\right]
\right |
\end{aligned}
\]
where we use the polynomial expansion and $\bbE[B_I] = 0$ in the last inequality. By Cauchy-Schwartz and independence, we further have 
\[
\begin{aligned}
w_I \le & \sqrt{\mathbb E [|B_I|^2 ]} \sum_{ \substack{J \subset I \\ J\neq\emptyset }}  \prod_{i\in J} \sqrt {
	\mathbb E \left[
	|h_i|^2 
	\right]
}
=   \sqrt{\mathbb E [|B_I|^2 ]}  
\left[
\prod_{i\in I} (\sqrt {
	\mathbb E \left[
	|h_i|^2 
	\right]} +1)-1
\right]
\end{aligned}
\] 
By \eqref{eqL: E B_I^2<} and our setting $I= \{m,m+1,\cdots,r\}$, we have 
\[
\mathbb E [|B_I|^2 ] \le \frac{C^{|I|} }{(\log b^{-1} )^{|I|-1} (N-r)^2}.
\] 
Also, we have discussed earlier that $|h_i|\le C_p|V_i|$; hence $\mathbb E \left[
|h_i|^2 
\right]  \le C_h^2 \mathbb E [V_i^2]\le \frac{C}{N-i}$ by \eqref{eqL: E U_i^2 =...}, and  
\[
\begin{aligned}
\prod_{i\in I} (\sqrt {
	\mathbb E \left[
	|h_i|^2 
	\right]} +1)-1 \le &\prod_{i\in I} \left(\frac{C}{\sqrt{N-i} } +1 \right) -1 
	\le \left(\frac{C}{\sqrt{N-r}} +1 \right)^{|I|} -1 \\
	& \le  \frac{1}{\sqrt{N-r}} \sum_{i=1}^{|I|} \binom{|I|}{i} C^{i}
	\le \frac{(1+C)^{|I|}}{\sqrt{N-r}},
\end{aligned}
\]
Thus we have, for an interval $I = \{m,\cdots, r\}$,
\[
w_I \le \frac{(1+C) ^{|I| }}{(\log b^{-1} )^{(|I|-1) /2 } (N-r)^{3/2}}.
\]
Let us go back to \eqref{eqL: negative moments, ratio by omega}: Since
\begin{align}\label{eq: neg bdd last}
	\sum_{\substack{\mathcal I \in \mathfrak I_n \\ \cI\neq \emptyset}} 
	\prod_{I \in \mathcal I}
	w_I
	\le
	\prod_{\substack{I \subset \{1,\cdots,n\} \\ I\text{ interval, } |I|\ge 2 }}
	(1+ w_I)-1
	\le \exp\Big\{ \sum_{\substack{I \subset \{1,\cdots,n\} \\ I\text{ interval, } |I|\ge 2 }} w_I  \Big\}-1,
\end{align}
by summing over the right end points and length of these intervals, we have
\begin{align}\label{S bdd}
\sum_{
I = \{m,..,r\}\in \{ 1,...,n=N-\ell \}\text{ with }|I| =l\ge 2
} w_I 
\le 
\sum^{n }_{r=2} \frac{1}{(N-r)^{\frac{3}{2}}} 
\sum^\infty_{l=2} \frac{C^l}{\big(\log \frac{1}{b}\big)^{\frac{l-1}{2}}}
\le
\frac{C}{\sqrt{\ell  \, \log \frac{1}{b}}}
\end{align}
where the last inequality is achieved by taking a fixed $b$ small enough. 
Finally, by choosing $\ell_0$ and thus $\ell \ge \ell_0$ sufficiently large in \eqref{S bdd}, \eqref{eq: neg bdd last} can be bounded above by $\frac{1}{2|p|}$, which finishes the proof. 
\end{proof}

\section{Applications to GMC and mSHE} \label{section: GMC and mSHE}
In this section, we discuss the applications of the moment comparison tools in Section \ref{section: moment comparison} for subcritical Gaussian multiplicative chaos and mollified stochastic heat equations. 

\subsection{Gaussian multiplicative chaos}\label{subsection: GMC}
In this subsection, we prove the finiteness of negative moments of subcritical Gaussian multiplicative chaos (GMC) which was proven, for example, in \cite{RobertVargas2010_GMCrevisited} and \cite{DS2010_liouville_and_KPZ}. To the best of our knowledge, the existing proofs rely on the branching structure of GMC and a resulted bootstrap inequality of the left tail. Here we reprove it as a direct application of the Ehrhard-type moment comparison inequality of \cite{Chen2026_Ehrhard_convex1}. As a benefit, we do not only recover the finiteness of negative moments directly, but also obtain a tractable upper bound.

We start with the basics of GMC. Fix the dimension $d$ and a function 
\[
\rho\in C_c^\infty (\mathbb R^d),\quad  \|\rho\|_{L^2(\mathbb R^d)} =1,\quad \rho \ge 0.
\]
Let $W(\td y,\td r)$ be a $d+1$ White noise. Consider the Gaussian field 
\[
X_\epsilon(x) := \int_1^{\epsilon^{-1}} \int_{\mathbb R^d} s^{\frac{d-1}{2} } \rho( s(x-y) ) W(\td y, \td r).
\]
Or equivalently, $X_\epsilon(x) = W(h_{\epsilon ,x})$ with 
\[
h_{\epsilon,x } (s,y): = \mathds{1}_{r\in [1,\epsilon ^{-1} ] } s^{\frac{d-1}{2} } \rho(s(x-y) ). 
\]
For any $\gamma \in (0,\sqrt{2d})$, set the log-normal measure 
\[
M_{\epsilon}^\gamma (\td x) := 
e^{\gamma X_\epsilon (x) - \frac{\gamma ^2 }{2} \mathbb E [X_\epsilon (x) ^2 ] } \dd x.
\]

\begin{theorem}[refer to \cite{RV_2014_GMCreview}]
	For any $\gamma \in (0,\sqrt{2d})$, $M_{\epsilon}^\gamma (\td x)$ converges a.s. and in $L^1$ to some random measure $M^\gamma$ under the vague topology. Especially, for any non-negative $\phi \in C_c(\mathbb R^d)$ with $\| \phi \| _{L^1 (\mathbb R^d) } >0$, $M^\gamma (\phi) >0$ almost surely.
\end{theorem}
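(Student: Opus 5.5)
The statement has two parts: convergence of $M^\gamma_\epsilon$ to $M^\gamma$ a.s., in $L^1$ and vaguely, and a.s. positivity of $M^\gamma(\phi)$. I plan to prove them separately.

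\textbf{Convergence.} For fixed $\phi\ge 0$ in $C_c(\mathbb R^d)$, I use that $X_\epsilon$ is built from white noise on the scale layer $r\in[1,\epsilon^{-1}]$. Hence $\epsilon\mapsto M_\epsilon^\gamma(\phi)$ is a nonnegative martingale as $\epsilon\downarrow 0$, with respect to the filtration generated by $W$ restricted to $r\le \epsilon^{-1}$, and it converges a.s. To upgrade this to $L^1$ convergence I prove uniform integrability, which is the standard subcritical input for $\gamma<\sqrt{2d}$. Two routes are available. In the $L^2$ phase $\gamma^2<d$, the covariance $\mathbb E[X_\epsilon(x)X_\epsilon(y)]\approx \log\frac1{|x-y|\vee\epsilon}+O(1)$ makes $\int\!\!\int |x-y|^{-\gamma^2}\,\mathrm dx\,\mathrm dy$ finite. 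For the full range I use the Kahane/Berestycki thick-point truncation, removing the points where $X_\epsilon$ exceeds $\alpha\log(1/\epsilon)$ with $\gamma<\alpha<\sqrt{2d}$. Taking a countable dense family of test functions then gives vague convergence. I cite \cite{RV_2014_GMCreview} for these details rather than redo them.

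\textbf{Positivity.} This is the part where the paper's framework applies, and I would argue as follows.

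\begin{enumerate}
\item For each $\epsilon$, approximate $\log M_\epsilon^\gamma(\phi)$ by convex functionals of finite-dimensional Gaussian vectors. Discretize the white noise into finitely many independent cells, so that $X_\epsilon(x)$ becomes a linear form $\langle a(x),G\rangle$ in a standard Gaussian vector $G$, and replace the $x$-integral by a Riemann sum with positive weights. The result is $\log\sum_k c_k e^{\gamma\langle a_k,G\rangle - \gamma^2\|a_k\|^2/2}$, a log-sum-exp of affine functions, which is convex by the Hölder argument of Lemma~\ref{lemma: convexity of Free energy}. By Corollary~\ref{corollary: Ehrhard for convex functional} each approximant is Ehrhard-type. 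Since the approximants converge in $L^2$ and the limit $M^\gamma_\epsilon(\phi)$ is positive, Lemma~\ref{lemma: Ehrhard type is close} shows that $\log M_\epsilon^\gamma(\phi)$ is Ehrhard-type.
\item Normalize by $\|\phi\|_{L^1}$, so that $\mathbb E[M_\epsilon^\gamma(\phi)]=1$. Apply Theorem~\ref{thm: Ehrhard moment comparison ineq} with a negative exponent $p$ and an exponent $q\in(1,2)$ small enough that $\sup_\epsilon\mathbb E[M_\epsilon^\gamma(\phi)^q]<\infty$. Such $q$ exists: in the subcritical regime, moments of order $q<2d/\gamma^2$ are uniformly bounded, and $2d/\gamma^2>1$. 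This gives
\[
\mathbb E[M_\epsilon^\gamma(\phi)^p]\le \bigl(\mathbb E[M_\epsilon^\gamma(\phi)^q]\bigr)^{p(p-1)/(q(q-1))}\le C,
\]
uniformly in $\epsilon$.
\item Pass to the limit by the truncation argument of \eqref{eqSHF: a truncation argument}. Since $x\mapsto x^p\wedge K$ is bounded and continuous on $(0,\infty]$, and $M_\epsilon^\gamma(\phi)\to M^\gamma(\phi)$ in law, we get $\mathbb E[M^\gamma(\phi)^p\wedge K]\le C$ for every $K$. Letting $K\uparrow\infty$ shows $\mathbb E[M^\gamma(\phi)^p]<\infty$, and in particular $M^\gamma(\phi)>0$ a.s.
\end{enumerate}

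The main obstacle I expect is in step 2: the uniform bound on a moment of order $q>1$ in the full subcritical range. For $\gamma^2<d$ one can take $q=2$, and the bound is an explicit covariance integral. For $d\le\gamma^2<2d$ I would invoke the known finiteness of $q$-moments for $1<q<2d/\gamma^2$, proved by Kahane's convexity comparison with an exactly scale-invariant field; Fatou's lemma then gives uniformity along the martingale.

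A secondary technical point is in step 1. The heavier approximating sums must still converge in law to $M^\gamma_\epsilon(\phi)$, and the limit must be non-constant, which holds because its variance is positive.
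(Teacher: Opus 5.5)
Your proposal is correct, but it does not follow the paper, which gives no proof of this statement. The paper cites \cite{RV_2014_GMCreview} for both convergence and positivity; as noted in Subsection~\ref{subsection: GMC}, the existing proofs rest on left-tail bootstrap arguments that use the branching structure of GMC. The paper then \emph{uses} this positivity as an input: it is what gives $\log M^\gamma_\epsilon(\phi)\to\log M^\gamma(\phi)$, so that Lemma~\ref{lemma: Ehrhard type is close} can be applied to the limit in Lemma~\ref{lemma: subGMC is Ehrhard-type} and Theorem~\ref{thm: GMC, negative moments}.

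You reverse this logic. You apply Theorem~\ref{thm: Ehrhard moment comparison ineq} at each fixed $\epsilon$, where positivity is trivial, and turn a uniform bound on one moment of order $q\in(1,2d/\gamma^2)$ into $\sup_\epsilon\mathbb E[M^\gamma_\epsilon(\phi)^p]<\infty$ for $p<0$. You then pass to the limit with the truncation \eqref{eqSHF: a truncation argument}. This is exactly how the paper obtains positivity of the SHF in the proof of Theorem~\ref{thm: SHF moment comparison ineq}.

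What this buys is that positivity, and the negative-moment bound of Theorem~\ref{thm: GMC, negative moments}, become outputs of the convexity method rather than external inputs. The negative-moment result of Subsection~\ref{subsection: GMC} then no longer depends on the cited positivity. The cost is that you still import from \cite{RV_2014_GMCreview} the $L^1$ convergence and the finiteness of a moment of order $q>1$. For $\gamma^2\ge d$ that finiteness needs Kahane's convexity machinery.

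Two small fixes are needed.
\begin{enumerate}
\item Uniformity in $\epsilon$ does not follow from Fatou's lemma, which bounds the limit by the approximants, not conversely. Use instead that uniform integrability closes the martingale, $M^\gamma_\epsilon(\phi)=\mathbb E[M^\gamma(\phi)\mid\mathcal F_\epsilon]$. Conditional Jensen then gives $\mathbb E[M^\gamma_\epsilon(\phi)^q]\le\mathbb E[M^\gamma(\phi)^q]$.
\item The function $x\mapsto x^p\wedge K$ (with $0^p:=+\infty$) should be used as a bounded continuous function on $[0,\infty)$. Continuity at $0$ is precisely what handles a possible atom of the limit at zero.
\end{enumerate}

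Finally, the Riemann sum in your step~1 is harmless but unnecessary. After projecting onto finitely many Gaussian coordinates, the map $\mathbf z\mapsto\log\int\phi(x)\,e^{\gamma\langle a(x),\mathbf z\rangle-\gamma^2|a(x)|^2/2}\,\mathrm dx$ is already convex by H\"older. So Corollary~\ref{corollary: Ehrhard for convex functional} applies directly, as in the proof of Lemma~\ref{lemma: subGMC is Ehrhard-type}.
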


The limiting measure $M^\gamma$ is called a subcritical GMC with exponent $\gamma$. From the positivity of $M^\gamma (f)$, we also get the a.s. convergence of $\log M^\gamma_\eps(f)$ to $\log M^\gamma (f)$. From Theorem 2.11 of \cite{RV_2014_GMCreview}, for any $0<p<\frac{2d}{\gamma^2}$, we have $\mathbb E [M^\gamma (f) ^p] <\infty$. 

To employ the Ehrhard-type moment comparison result in Section \ref{section: moment comparison}, we prove in the following lemma that $\log M^\gamma(\phi)$ is Ehrhard-type. 

\begin{lemma}\label{lemma: subGMC is Ehrhard-type}
	Fix $d\in \mathbb Z_+$ and $\gamma \in (0,\sqrt{2d})$. For any non-negative $\phi \in C_c(\mathbb R^d)$ with $\| \phi \| _{L^1 (\mathbb R^d) } >0$, $\log M^\gamma (\phi )$ is an Ehrhard-type random variable (see Definition \ref{def: Ehrhard-type random variables}). 
\end{lemma}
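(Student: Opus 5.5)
The plan is to realise $\log M^\gamma(\phi)$ as a weak limit of convex functionals of finite-dimensional Gaussian vectors, and then to apply Lemma~\ref{lemma: Ehrhard type is close}.

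\textbf{Step 1 (approximation).} For fixed $\epsilon>0$, the field $X_\epsilon(x)=W(h_{\epsilon,x})$ is a linear functional of the white noise, and $M_\epsilon^\gamma(\phi)=\int \phi(x)e^{\gamma X_\epsilon(x)-\frac{\gamma^2}{2}\mathbb E[X_\epsilon(x)^2]}\,\mathrm dx$. This functional is not finite-dimensional, so I will discretise in two ways.
\begin{itemize}
\item Expand $W$ restricted to the bounded region $[1,\epsilon^{-1}]\times(\mathrm{supp}\,\phi+\mathrm{supp}\,\rho(\epsilon^{-1}\cdot)$-neighbourhood$)$ in an orthonormal basis $(e_k)$ of $L^2$. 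Then $X_\epsilon(x)=\sum_k \xi_k\langle h_{\epsilon,x},e_k\rangle$ with $\xi_k$ i.i.d.\ $\mathcal N(0,1)$.
\item Truncate to $k\le K$, giving $X_{\epsilon,K}(x)$. Replace the variance normalisation by that of $X_{\epsilon,K}$; a deterministic shift is irrelevant for convexity.
\end{itemize}
The resulting $F_K(\xi_1,\dots,\xi_K)=\log\int\phi(x)\exp\{\gamma\sum_{k\le K}\xi_k a_k(x)-c_K(x)\}\,\mathrm dx$ is convex in $\xi$, by H\"older's inequality exactly as in Lemma~\ref{lemma: convexity of Free energy} (log-sum-exp of affine functions). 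Corollary~\ref{corollary: Ehrhard for convex functional} then shows that $F_K(\xi)$ is Ehrhard-type.

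\textbf{Step 2 (limits).} I will show $M_{\epsilon,K}^\gamma(\phi)\to M_\epsilon^\gamma(\phi)$ in $L^1$, hence in probability, as $K\to\infty$. Since $\sum_k\langle h_{\epsilon,x},e_k\rangle^2$ converges uniformly in $x$ on compacts and $x\mapsto h_{\epsilon,x}$ is continuous, $(X_{\epsilon,K}(x))_K$ is an $L^2$-bounded martingale in $K$. The exponentials then converge in $L^1$ pointwise in $x$, with uniformly bounded second moments because $\mathbb E[X_\epsilon(x)^2]<\infty$ is bounded. Fubini plus dominated convergence gives $L^1$ convergence of the integral. A simpler alternative is the martingale convergence theorem applied to $\mathbb E[M_\epsilon^\gamma(\phi)\mid\xi_1,\dots,\xi_K]=M^\gamma_{\epsilon,K}(\phi)$, which is exactly the truncated object.

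Next, $M_\epsilon^\gamma(\phi)\to M^\gamma(\phi)$ a.s.\ by the cited convergence theorem. Because all these masses are strictly positive, the limit $M^\gamma(\phi)>0$ a.s., and the continuous mapping theorem gives weak convergence of the logarithms at each stage. A diagonal choice $K=K(\epsilon)$ yields a single sequence of Ehrhard-type variables $\log M^\gamma_{\epsilon,K(\epsilon)}(\phi)\Rightarrow\log M^\gamma(\phi)$. Alternatively, apply Lemma~\ref{lemma: Ehrhard type is close} twice.

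\textbf{Step 3 (non-degeneracy).} Lemma~\ref{lemma: Ehrhard type is close} requires the limit to be non-constant, and also needs the intermediate limits $\log M_\epsilon^\gamma(\phi)$ to be non-constant. Both $M_\epsilon^\gamma(\phi)$ and $M^\gamma(\phi)$ have mean $\|\phi\|_{L^1}$. If $M^\gamma(\phi)$ were a.s.\ constant, it would equal $\|\phi\|_{L^1}$. This can be excluded because $\mathrm{Var}(M^\gamma(\phi))>0$ for $\gamma^2<d$, or more generally via the $L^1$ martingale structure: $M^\gamma(\phi)=\lim M^\gamma_\epsilon(\phi)$ with $M_\epsilon^\gamma(\phi)=\mathbb E[M^\gamma(\phi)\mid\mathcal F_\epsilon]$ non-constant for any $\epsilon<1$, so the limit cannot be constant.

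I expect the main obstacle to be this non-constancy in the full subcritical range $\gamma^2\in[d,2d)$, where second moments fail. The conditional-expectation identity from the uniform integrability of the $L^1$-convergent martingale handles it.
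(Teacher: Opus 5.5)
Your proposal is correct and follows essentially the same route as the paper: you truncate the white noise to finitely many coordinates $W(e_1),\dots,W(e_K)$, use H\"older to see that the truncated log-mass is a convex function of this Gaussian vector, identify the truncation as $\mathbb E[M^\gamma_\epsilon(\phi)\mid \mathcal F_K]$ to get a.s.\ and $L^1$ convergence, and apply Lemma~\ref{lemma: Ehrhard type is close} twice (first $K\to\infty$, then $\epsilon\to0$), with your Step~3 supplying the non-constancy hypothesis of that lemma, which the paper leaves implicit. One small correction: a.s.\ positivity of $M^\gamma(\phi)$ does not follow from positivity of the approximants, since a limit of positive variables can vanish; it is, however, part of the cited convergence theorem, so your continuous-mapping step for the logarithms still holds.
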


\begin{proof}
	According to Lemma \ref{lemma: Ehrhard type is close}, it suffices to prove that $\log M^\gamma _\epsilon (\phi)$ is Ehrhard-type. Take an orthonormal  basis $\{ e_k: k=1,...\}$ of $H:= L^2(\mathbb R^d \times [1,\infty ))$.
	Set 
	\[
	M^\gamma _{\epsilon ,N}(\phi ):= 
	\int_{\mathbb R^d} \phi(x) 
	e^{\gamma \sum_{k=1} ^N W (e_k) \left\langle h_{\epsilon,x} , e_k\right\rangle_H -\frac{\gamma ^2 }{2} \sum_{k=1}^N \left\langle h_{\epsilon,x} , e_k\right\rangle_H ^2  } \td x
	\]
	which converges a.s. and in $L^1$ to $M^\gamma _\epsilon (\phi)$. Indeed, since $\{W(e_k):k=1,...\}$ is a family of independent standard Gaussian random variables and $W(h_{\epsilon,x}) = W (P_Nh_{\epsilon,x} ) + W ( (I-P_N)h_{\epsilon,x} )$ with $P_Nh_{\epsilon,x} (y) = \sum_{k=1}^N \left\langle h_{\epsilon,x} , e_k\right\rangle_H e_k$, the claimed convergence follows from 
	$
	M^\gamma _{\epsilon ,N}(\phi ) = \mathbb E \left[
	M^\gamma _{\epsilon }(\phi ) | \mathcal F_N
	\right]
	$  
	where $\mathcal F_N$ is the filter generated by $\{ W(e_1),...W(e_N)\}$. 
	
	Note that $\log M^\gamma _{\epsilon ,N}(\phi )= F_N( W(e_1), ...,W(e_N) ) $ with 
	\[
	F_N(\mathbf z)
	=
	F_N(z_1,...,z_N):= 
	\log \int_{\mathbb R^d} \phi(x) 
	e^{\gamma \sum_{k=1} ^N z_k \left\langle h_{\epsilon,x} , e_k\right\rangle_H -\frac{\gamma ^2 }{2} \sum_{k=1}^N \left\langle h_{\epsilon,x} , e_k\right\rangle_H ^2  } \td x
	\]
	By H\"{o}lder's inequality, $F_N(\gl \mathbf z+ (1-\gl) \hat{ \mathbf z}) \le \lambda F_N(\mathbf z) + (1-\lambda ) F_N(\hat{\mathbf z})$ for any $\lambda \in (0,1)$. As a result, $\log M^\gamma _{\epsilon ,N}(\phi )$ is a convex functional of the Gaussian vector $( W(e_1),...,W(e_N) )$ and thus Ehrhard-type by Lemma \ref{lemma: Ehrhard and convex Gaussian}. Therefore, the limit $\log M^\gamma _\epsilon (\phi)$ is also Ehrhard-type and the proof is finished.   
\end{proof}

As a direct application of Theorem \ref{thm: Ehrhard moment comparison ineq} and Lemma \ref{lemma: subGMC is Ehrhard-type}, we have the following bound for the negative moments of GMC:
\begin{theorem}\label{thm: GMC, negative moments}
	Consider the same setting and notations as Lemma \ref{lemma: subGMC is Ehrhard-type}. Assume $\|\phi \|_{L^1(\mathbb R^d )}=1$. Then, for any fixed $p\in (1, \frac{2d}{\gamma^2})$, we have 
	\begin{equation}\label{eqGMC: negative moments <}
		\mathbb E \left[ ( M^\gamma (\phi) )^q\right] \le 
		\left(
		\mathbb E \left[ ( M^\gamma (\phi ) )^p\right]
		\right)^{\frac{q(q-1) }{p(p-1)}} <\infty ,\quad \forall q<0. 
	\end{equation}
\end{theorem}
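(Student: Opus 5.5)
Theorem~\ref{thm: GMC, negative moments} follows directly from Theorem~\ref{thm: Ehrhard moment comparison ineq} applied to $X:=\log M^\gamma(\phi)$ with $M:=e^X=M^\gamma(\phi)$. The argument has three steps.

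\textbf{Step 1: the hypotheses of Theorem~\ref{thm: Ehrhard moment comparison ineq}.} Lemma~\ref{lemma: subGMC is Ehrhard-type} shows that $X$ is Ehrhard-type. For the normalization, the $L^1$ convergence of $M_\epsilon^\gamma(\phi)$ to $M^\gamma(\phi)$ gives
\[
\mathbb E[M^\gamma(\phi)]=\lim_{\epsilon\to0}\mathbb E[M^\gamma_\epsilon(\phi)]=\|\phi\|_{L^1(\mathbb R^d)}=1,
\]
because each $M_\epsilon^\gamma$ has Lebesgue intensity by construction. So $\mathbb E[M]=1$. Almost sure positivity of $M^\gamma(\phi)$ ensures that $X$ is real-valued.

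\textbf{Step 2: the comparison.} Fix $q<0$ and $p\in(1,2d/\gamma^2)$. Then $q<p$ and $q,p\notin\{0,1\}$, so \eqref{eqMC: moment comparison ineq} gives
\[
\frac{\log\mathbb E[M^q]}{q(q-1)}\le\frac{\log\mathbb E[M^p]}{p(p-1)}.
\]
Both denominators are positive, since $q(q-1)>0$ for $q<0$ and $p(p-1)>0$ for $p>1$. Multiplying through by $q(q-1)$ and exponentiating yields \eqref{eqGMC: negative moments <}.

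\textbf{Step 3: finiteness.} The right-hand side is finite because $\mathbb E[M^\gamma(\phi)^p]<\infty$ for $0<p<2d/\gamma^2$ (Theorem~2.11 of \cite{RV_2014_GMCreview}, quoted above). The left-hand side is a priori in $(0,\infty]$, and the inequality itself forces it to be finite. Lemma~\ref{lemma: Ehrhard and convex Gaussian} gives this finiteness independently as well.

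The only step needing some care is the normalization $\mathbb E[M]=1$. It depends on $L^1$ convergence in the vague topology tested against $\phi\in C_c$, which the cited convergence theorem provides. Nothing else is an obstacle.
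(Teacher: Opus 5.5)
Your proposal is correct and follows the paper's route: the paper also obtains the statement as a direct application of Theorem~\ref{thm: Ehrhard moment comparison ineq} to $X=\log M^\gamma(\phi)$, which is Ehrhard-type by Lemma~\ref{lemma: subGMC is Ehrhard-type}. Your checks that $\mathbb E[M^\gamma(\phi)]=1$ (via $L^1$ convergence) and that $\mathbb E[M^\gamma(\phi)^p]<\infty$ for $p<2d/\gamma^2$ fill in details the paper leaves implicit.
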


\begin{remark}
	The assumption $\|\phi \|_{L^1(\mathbb R^d)}=1$ in Theorem \ref{thm: GMC, negative moments} is only for convenience since we can replace $M^\gamma (\phi)$ by $\frac{M^\gamma (\phi )}{ \|\phi \|_{L^1 (\bbR^d)}}$ and then proceed with the same argument. 
\end{remark}

\subsection{Mollified 2d critical stochastic heat equation}\label{subsection: mSHE}
Parallel to the DPRE framework introduced in Section \ref{section: preliminary, SHF}, the $2d$ critical SHF can also be obtained as the weak limit of the solution of the mollified critical stochastic heat equation \eqref{eqSHE: mSHE} under a suitable choice of $\gb_\eps$; see \cite{Tsai2024_SHF_by_moments}.

Assume $\xi$ is a $2+1$ time-space white noise. Take a smooth mollifier 
$\phi_\epsilon (x):= \epsilon ^{-2}\phi(\epsilon ^{-1}x)$ with $\epsilon \in (0,\frac{1}{2})$, non-negative $\phi\in C_c^\infty (\mathbb R^2)$ with $\| \phi \| _{L^1 (\mathbb R^2)} =1$ and write the spatially mollified white noise $\xi_\epsilon (\td t ,x):= \int_{\mathbb R^2} \phi_\epsilon (x-y) \xi(\td t, \td y )$. Especially, we have the covariance 
\[
\mathbb E \left[
\xi_\epsilon (\td t, x_1) \xi_\epsilon (\td s, x_2) 
\right] 
=  \delta _{t-s} \dd s \dd t \int_{\mathbb R^2} \phi_\epsilon (x_1-x_2+y) \phi_\epsilon (y) \td y 
=: \delta _{t-s}  \dd s \dd t R_\epsilon (x_1-x_2 ) .
\]
Consider the mollified stochastic heat equation (mSHE)
\begin{equation}\label{eqSHE: mSHE}
	\td u_\epsilon (t,x) = \frac{1}{2}\Delta u_\epsilon (t,x) \td t+ \beta_\epsilon u_\epsilon \xi_\epsilon  (\td t, x) 
\end{equation}
with $\beta_\epsilon  \in \mathbb R$ for each $\epsilon \in (0,\frac{1}{2})$. 

For any non-negative and non-zero $f$, $g$, write
$u_\epsilon (t;f,\cdot)$ as the solution for the mSHE above with initial condition $u_\epsilon (0,x) = f(x)$ and 
$u_\epsilon (t; f,g):= \int _{\mathbb R^2} u_\epsilon(t;f,x) g(x) \td x$. Then, by Feynman-Kac formula (see \cite{BC1998_secondmomentSHE}), 
\begin{equation}\label{eqSHE: F-K}
	u_\epsilon (t;f,g)= \int_{\mathbb R^2} f(x)
	E_x\left[
	e^{\beta_\epsilon \int_0^t  \xi_\eps(\td r, B_{t-r} ) - \frac{\beta_\epsilon ^2}{2} R_\epsilon(0) t} g(B_t)
	\right] \td x.
\end{equation}
Especially, we get the a.s. positivity of the solution when $\| f\|_{L^1(\mathbb R^2)} >0$ and $\| g\|_{L^1(\mathbb R^2)} >0$. 
For convenience of discussion, we give a GMC-like representation for \eqref{eqSHE: F-K}. Define $\Omega := C( [0,t]; \mathbb R^2)$ and 
$H:= L^2 ([0,t]\times \mathbb R^2)$ and write $h_\eps(\cdot) : \Omega \rightarrow H$ with $h_\eps(\gamma) (s,x) := \phi_\epsilon (\gamma_{t-s} -x )$ for $\gamma\in \Omega$, then
\begin{equation}\label{eqSHE: F-K 2}
	u_\epsilon(t;f,g) = \int_\gO \mu_{f,g}(t,\td \gamma) e^{\beta_\epsilon \xi (h_\eps(\gamma)) - \frac{\beta_\epsilon^2 }{2} \mathbb E [\xi (h_\eps(\gamma))^2  ]}
\end{equation}
where $\xi(h_\eps(\gamma)) = \int_{\bbR_+\times \bbR^2} h_\eps(\gamma)(t,x) \xi(\dd t,\dd x)$, and $\mu_{f,g}(t,\cdot) $ is a positive finite measure on $\Omega $ defined by
\begin{align*}
	\mu_{f,g}(t,\dd \gamma)
	:=
	\int_{\bbR^2}f(x) g(\gamma_t) 
	P_x( \dd \gamma)\, \dd x,
\end{align*} 
where $P_x$ is the Wiener measure started at $x$.

\begin{lemma}\label{lemma: mSHE solution is Ehrhard-type}
	For any $\epsilon \in (0,\frac{1}{2})$, $f\equiv 1$, $g\in L^1(\mathbb R^2)$, $\log u_\epsilon(t;f,g)$ with $u_\epsilon$ defined in \eqref{eqSHE: F-K 2} is an Ehrhard-type random variable defined in Definition \ref{def: Ehrhard-type random variables}.
\end{lemma}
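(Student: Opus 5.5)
We follow the proof of Lemma~\ref{lemma: subGMC is Ehrhard-type}: first show that finite-dimensional Gaussian approximations of $\log u_\epsilon(t;f,g)$ are convex functionals of Gaussian vectors, then pass to the limit with Lemma~\ref{lemma: Ehrhard type is close}. Two reductions come first. Since $f\equiv 1$ is not integrable, we insert a spatial cutoff $f_R:=\chi_R$ with $\chi_R\uparrow 1$. It is also convenient to assume $g\ge0$ with $\|g\|_{L^1}>0$, which is the standing assumption of this subsection. Then $u_\epsilon(t;f_R,g)\uparrow u_\epsilon(t;1,g)$ almost surely by monotone convergence in \eqref{eqSHE: F-K 2}. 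The limit is finite a.s. because its mean is $\int_{\mathbb R^2}\int_{\mathbb R^2} g_t(y-x)g(y)\,\td y\,\td x=\|g\|_{L^1}<\infty$. It is positive a.s. because it dominates $u_\epsilon(t;f_1,g)>0$. Hence $\log u_\epsilon(t;f_R,g)\to\log u_\epsilon(t;1,g)$ almost surely, and so weakly.

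For fixed $R$, take an orthonormal basis $\{e_k\}$ of $H=L^2([0,t]\times\mathbb R^2)$ and let $P_N$ be the projection onto $\mathrm{span}\{e_1,\dots,e_N\}$. Define
\[
u_{\epsilon,N}(t;f_R,g):=\int_\Omega \mu_{f_R,g}(t,\td\gamma)\exp\Big\{\beta_\epsilon\sum_{k=1}^N \xi(e_k)\langle h_\epsilon(\gamma),e_k\rangle_H-\frac{\beta_\epsilon^2}{2}\sum_{k=1}^N\langle h_\epsilon(\gamma),e_k\rangle_H^2\Big\}.
\]
This equals $\mathbb E[u_\epsilon(t;f_R,g)\mid\mathcal F_N]$, where $\mathcal F_N:=\sigma(\xi(e_1),\dots,\xi(e_N))$. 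The identity follows from Fubini and the Gaussian computation, since $\xi((I-P_N)h_\epsilon(\gamma))$ is independent of $\mathcal F_N$. Fubini is legitimate because $\mu_{f_R,g}$ is a finite measure and the exponential weights have unit mean. Because $u_\epsilon(t;f_R,g)\in L^1$, martingale convergence gives $u_{\epsilon,N}\to u_\epsilon(t;f_R,g)$ almost surely and in $L^1$. The limit is a.s. positive, so the logarithms converge almost surely too.

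Next, $\log u_{\epsilon,N}(t;f_R,g)=F_N(\xi(e_1),\dots,\xi(e_N))$. Here $F_N(\mathbf z)$ is the logarithm of a $\mu$-integral of exponentials that are affine in $\mathbf z$. By H\"older's inequality, exactly as in Lemma~\ref{lemma: convexity of Free energy}, $F_N$ is convex with values in $\mathbb R\cup\{\infty\}$. Therefore Corollary~\ref{corollary: Ehrhard for convex functional} gives the concavity of $h_X$ for $X=F_N(G)$. For the remaining conditions of Definition~\ref{def: Ehrhard-type random variables}, note that $F_N(G)$ is a.s. finite since it has finite mean, and $F_N(G)$ is non-constant.

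Applying Lemma~\ref{lemma: Ehrhard type is close} twice, first as $N\to\infty$ and then as $R\to\infty$, yields the claim. Each use of that lemma requires the limit to be non-constant. This is the step we expect to be the main obstacle. We would argue it as follows. Suppose $\log u_\epsilon(t;f_R,g)$ were a.s. constant. Then $u_\epsilon$ would equal its mean a.s., so it would have zero variance. The second moment from the Feynman--Kac formula is
\[
\int\!\!\int \mu(\td\gamma)\,\mu(\td\gamma')\,\Big(e^{\beta_\epsilon^2\langle h_\epsilon(\gamma),h_\epsilon(\gamma')\rangle_H}-1\Big).
\]
This quantity is strictly positive when $\beta_\epsilon\neq0$, because $h_\epsilon\ge0$ and the inner products are positive on a set of positive $\mu\otimes\mu$ measure. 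Hence the variance is nonzero, a contradiction. The same argument applies at the limit $R=\infty$, where the variance is also positive, possibly infinite. If $\beta_\epsilon=0$, then $u_\epsilon$ is deterministic and the statement has to be read as excluding that trivial case.
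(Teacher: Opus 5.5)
Your proposal is correct and is essentially the argument the paper intends: the paper omits the proof and points to the GMC case (conditional-expectation approximations $\mathbb E[\,\cdot\mid\mathcal F_N]$, convexity of the finite-dimensional free energy by H\"older, then closedness of the Ehrhard-type class under weak limits). Your variance-based non-constancy check, and your remark that $\beta_\epsilon\neq 0$ (together with $g\ge 0$, $g\not\equiv 0$) must be assumed, fill in points the paper leaves implicit. Two small remarks: the cutoff $f_R$ is harmless but unnecessary, since $\mu_{1,g}(t,\cdot)$ already has finite total mass $\|g\|_{L^1}$, as your own mean computation shows; and the non-constancy of each $F_N(G)$ deserves a line, for instance by choosing $e_1$ proportional to $h_\epsilon(\gamma_0)$ for some $\gamma_0$ in the support of $\mu$, so that $\langle h_\epsilon(\gamma),e_1\rangle_H>0$ on a set of positive $\mu$-measure.
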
 
Since the proof follows the same argument as that of Lemma \ref{lemma: subGMC is Ehrhard-type}, we omit the details.

\appendix
\section*{Acknowledgment and AI tool disclosure}
Z.X. is supported by the China Scholarship Council–University of Warwick Award. We thank Nikos Zygouras for his valuable and useful comments and suggestsions. In particular, Z.X. thanks him for discussions on the blowing-up phenomenon of Lipschitz constants of the free energy in the critical two-dimensional DPRE, which provided the initial motivation for this project. We also thank Francesco Caravenna and Rongfeng Sun for their helpful comments. 

When studying the left tail of the critical two-dimensional DPRE, and in particular seeking a potential substitution for the Gaussian concentration inequality used heavily in earlier work on DPRE, the authors used GPT-5.6 in Thinking mode as an exploratory tool to find the results of Ehrhard inequality and Gaussian convexity property. The proof idea of Subsection \ref{subsection: Local negative moments lower bound} and especially the sharp second moment estimate in Lemma \ref{lemma: Block variance} were suggested by GPT-5.6 pro. However, we emphasize that the framework there based on a geometric decomposition of locally averaged SHF has appeared in \cite{GuTsai_2026_loglogCLT} and related earlier work before our use of AI. 

\bibliographystyle{alpha}
\bibliography{Ehrhard_SHF}
\end{document}